\newtheorem{theorem}{Theorem}[section]
\newtheorem{lem}[theorem]{Lemma}
\newtheorem{prop}[theorem]{Proposition}
\newtheorem{coro}[theorem]{Corollary}
\newtheorem{fact}[theorem]{Fact}
\theoremstyle{definition}
\newtheorem{defi}[theorem]{Definition}
\newtheorem{ex}[theorem]{Example}
\newtheorem{rem}[theorem]{Remark}
\numberwithin{equation}{section}
\numberwithin{theorem}{section}
\newcommand{\Z}{\mathbb{Z}}
\newcommand{\N}{\mathbb{N}}
\newcommand{\Q}{\mathbb{Q}}
\newcommand{\R}{\mathbb{R}}
\newcommand{\C}{\mathbb{C}}
\begin{document}

\title[Uniqueness in discrete tomography of algebraic Delone sets]{Solution of a uniqueness
  problem in the discrete tomography of algebraic Delone sets}
\author{Christian Huck}
\author{Michael Spie\ss}
\address{Fakult\"{a}t f\"{u}r Mathematik,
  Universit\"{a}t Bielefeld, Postfach 100131, 33501 Bielefeld, Germany}
\email{huck@math.uni-bielefeld.de}
\email{mspiess@math.uni-bielefeld.de}

\keywords{Discrete tomography, X-ray, algebraic Delone set, model set,
  $U$-polygon, convex body.}
\subjclass[2010]{Primary 52C23; Secondary 11R04, 11R06, 11R18, 12F10, 52A10, 52C05,82D99}

\begin{abstract}
We consider algebraic Delone sets $\varLambda$ in the
Euclidean plane and address the problem of
distinguishing convex subsets of $\varLambda$ by X-rays in prescribed $\varLambda$-directions, i.e., directions parallel to nonzero
interpoint vectors of $\varLambda$.  Here, an X-ray in direction
$u$ of a finite
 set gives the number of points in the set on each line
parallel to $u$. It is shown
that for any algebraic Delone set $\varLambda$ there are four
prescribed $\varLambda$-directions such that any two convex subsets of
$\varLambda$ can be distinguished by the corresponding X-rays. We
further prove the existence of a natural number
$c_{\varLambda}$ such that any two convex subsets of $\varLambda$ can be distinguished by their X-rays in any set of
$c_{\varLambda}$ prescribed
$\varLambda$-directions. In particular, this extends a well-known result of Gardner
and Gritzmann on the corresponding problem for planar lattices to
nonperiodic cases that are relevant in quasicrystallography.
\end{abstract}

\maketitle

\section{Introduction}\label{intro}
{\em Discrete tomography} is concerned with the 
inverse problem of retrieving information about some {\em finite}
object in Euclidean space from 
(generally noisy) information about its slices. One important problem
is the {\em unique reconstruction} of a finite point set in Euclidean $3$-space 
from its {\em (discrete parallel) X-rays} in a small number of directions, where the
  {\em X-ray} of the finite set in a certain direction is the {\em line sum
  function} giving the
number of points in the set on each line parallel to this direction. 

The interest in the discrete tomography of planar Delone sets $\varLambda$
with long-range order is motivated by the requirement in materials
science for the unique reconstruction of solid state materials like {\em quasicrystals}
slice by slice from their images under quantitative {\em high
  resolution transmission electron microscopy} (HRTEM). In fact,
in~\cite{ks}, \cite{sk} a technique
is described, which can, for certain 
crystals, effectively measure the number of atoms lying on densely occupied 
columns. It is reasonable to expect that future developments in
technology will extend this situation to other solid state
materials. The aforementioned density condition forces us to consider only $\varLambda$-directions, i.e.,
directions parallel to nonzero interpoint vectors of $\varLambda$. Further, since typical objects may be damaged or even destroyed by the radiation
energy after about $3$ to $5$ images taken by
HRTEM, applicable results may only use a small 
number of X-rays. It actually is this restriction to few high-density
directions that makes the
problems of discrete tomography mathematically challenging, even if
one assumes the absence of noise. 

In the
 traditional setting, motivated by {\em crystals}, the positions to be
 determined form a finite subset of a 
 three-dimensional lattice, the latter allowing a slicing into equally
 spaced 
 congruent copies of a planar lattice.  In the crystallographic
 setting, by the affine nature
 of the problem, it therefore suffices to study
 the discrete tomography of the square lattice;
 cf.~\cite{GG}, \cite{GG2}, \cite{GGP}, \cite{Gr}, \cite{HT}, \cite{HK},
 \cite{HK2} for an
 overview. For the 
 {\em quasicrystallographic} setting, the
 positions to be determined form a finite subset of a {\em nonperiodic Delone
   set} with {\em long-range order} 
 (more precisely, a {\em mathematical
   quasicrystal}  or {\em model set}~\cite{BM}, \cite{Moody}) which on the other hand is contained
 in a free additive subgroup of $\R^3$ of finite rank $r> 3$. These model sets
 possess, as it is the case for lattices, a dimensional hierarchy, i.e., they allow a slicing into
 planar model sets. However, the slices are in general no longer pairwise congruent or equally
 spaced in $3$-space; cf.~\cite{PABP2}.  Still, most of
 the model sets that describe real quasicrystallographic structures
 allow a slicing such that each slice is, when seen from a common
 perpendicular viewpoint, a (planar) {\em $n$-cyclotomic model set},
 where $n=5$, $n=8$ and $n=12$, respectively (Example~\ref{algex});
 cf.~\cite[Sec.~1.2]{H}, \cite{H2}, \cite[Sec.~4.5]{H5} and \cite{St}
 for details. These cyclotomic
 model sets thus take over the role played by the planar lattices in
 the crystallographic case. In the present text, we shall focus
 on the larger class of {\em algebraic Delone sets} (Definition~\ref{algdeldef}).

Since different finite subsets of a Delone set $\varLambda$ may have the same X-rays in several
$\varLambda$-directions (in other words, the above problem of uniquely
reconstructing a finite point set from its X-rays is an
{\em ill-posed}\/ problem in general), one is naturally interested in conditions
to be imposed
on the set of $\varLambda$-directions together with restrictions on
the possible finite subsets of
$\varLambda$ such that the latter phenomenon
cannot occur. Here, we consider the {\em convex subsets}
of $\varLambda$ (i.e., bounded subsets of $\varLambda$ with
the property that their convex hull contains no new points of $\varLambda$) and show that for any algebraic Delone
set $\varLambda$ there are four prescribed $\varLambda$-directions such that any two convex subsets of
$\varLambda$ can be distinguished by the corresponding X-rays,
whereas less than four $\varLambda$-directions never suffice for this
purpose (Theorem~\ref{dtmain}(a)). We
further prove the existence of a finite number
$c_{\varLambda}$ such that any two convex subsets of $\varLambda$
can be distinguished by their X-rays in {\em any} set of
$c_{\varLambda}$ prescribed
$\varLambda$-directions (Theorem~\ref{dtmain}(b)). Moreover, we
demonstrate that the
least possible numbers $c_{\varLambda}$ in the
case of the practically most relevant examples of $n$-cyclotomic model
sets $\varLambda$ with $n=5$, $n=8$ and $n=12$ are (in that
 very order) $11$, $9$ and $13$ (Theorem~\ref{dtmain2}(b) and Remark~\ref{rembest}). This extends a well-known result of Gardner
and Gritzmann (cf.~\cite[Thm.~5.7]{GG}) on the corresponding
problem for planar lattices $\varLambda$ ($c_{\varLambda}=7$) to
cases that are relevant in quasicrystallography and particularly solves Problem
4.34 of~\cite{H5}. The above results and their continuous analogue 
(Theorem~\ref{tmain})  follow from deep insights into the existence
of certain {\em $U$-polygons} in the plane (cf.~Sec.~\ref{sec1}). We believe that our main
result on these polygons (Theorem~\ref{main}) is of independent
interest from a purely geometrical point of view.  For the algorithmic 
reconstruction problem in the quasicrystallographic setting, we refer
the reader to~\cite{BG2},~\cite{H2}.

\section{Preliminaries and notation}\label{sec1}

Natural numbers are always assumed to be positive. We denote the norm in Euclidean $d$-space by $\Arrowvert \cdot \Arrowvert$. The Euclidean plane will occasionally be
identified with the complex numbers. For $z\in\C$, $\bar{z}$ denotes the 
complex conjugate of $z$ and $|z|=\sqrt{z\bar{z}}$ its modulus. The unit circle in $\C$ is
denoted by $\mathbb{S}^{1}$ and its elements are also called
{\em directions}. For $z\in \C^*$, we denote
by $\operatorname{sl}(z)$ the slope of $z$, i.e., $\operatorname{sl}(z)=-i(z-\bar{z})/(z+\bar{z})\in
\mathbb{R}\cup\{\infty\}$. For $r>0$ and $z\in\C$,
$B_{r}(z)$ is the open ball of radius $r$ about $z$. Recall that an
($\R$-){\em linear endomorphism} (resp., {\em affine endomorphism})
 of $\C$ is given
by $z \mapsto az+b\bar{z}$ (resp., $z \mapsto az+b\bar{z}+t$), where $a,b,t\in\C$. In both cases,
it is an automorphism if and only if $az+b\bar{z}=0$ only holds for $z=0$. A {\em homothety} $h\!:\, \C \rightarrow
\C$ is given by $z \mapsto \lambda z + t$, where
$\lambda \in \R$ is positive and $t\in \C$. In the following, let
$\varLambda$ be a subset of $\C$. A direction
$u\in\mathbb{S}^{1}$ is called a $\varLambda${\em-direction} if it is
parallel to a nonzero element of the difference set
$\varLambda-\varLambda=\{v-w\,|\,v,w\in\varLambda\}$ of $\varLambda$. A {\em convex polygon} is the convex hull of a finite set of points in $\C$. A {\em polygon in} $\varLambda$ is a convex polygon with all vertices in $\varLambda$. Further, a bounded subset $C$ of $\varLambda$ is called a {\em convex subset of} $\varLambda$ if $C =
\operatorname{conv}(C)\cap \varLambda$, where $\operatorname{conv}(C)$
denotes the convex hull of $C$. Let $U\subset \mathbb{S}^{1}$ be
a finite set of directions. A nondegenerate convex polygon $P$ is
called a {\em $U$-polygon} if it has the property that whenever $v$ is
a vertex of $P$ and $u\in U$, the line in the complex plane in direction $u$
which passes through $v$ also meets another vertex $v'$ of $P$. By a
{\em regular polygon} we shall always mean a nondegenerate convex regular polygon. An
{\em affinely regular polygon} is the image of a
regular polygon under an affine automorphism of the complex plane. $\varLambda$ is called {\em uniformly discrete} if there is a radius
$r>0$ such that every ball $B_{r}(z)$ with $z\in\C$ contains at most one point of
  $\varLambda$. Note that the bounded 
  subsets of a uniformly discrete set $\varLambda$ are precisely the
  finite subsets of $\varLambda$. $\varLambda$ is called {\em relatively dense} if there is a radius $R>0$
  such that every ball $B_{R}(z)$ with $z\in\C$ contains at least one point of
  $\varLambda$. $\varLambda$ is called a {\em Delone set} if it is both uniformly
  discrete and relatively dense. $\varLambda$ is said to be of
  \emph{finite local complexity} if $\varLambda-\varLambda$ is
  discrete and closed. Note that $\varLambda$ is of finite local complexity if and
  only if for every $r>0$ there
  are, up to translation, only finitely many \emph{patches of radius
    $r$}, i.e., sets of the form $\varLambda\cap
  B_{r}(z)$, where $z\in\C$; cf.~\cite{Moody}.  A Delone set $\varLambda$ is a \emph{Meyer set} if $\varLambda-\varLambda$ is
  uniformly discrete. Trivially, any Meyer set
  is of finite local complexity. $\varLambda$ is called {\em periodic} if it has
nonzero translation symmetries. Finally, we denote by $K_{\varLambda}$
the intermediate field of $\C/\Q$ that is given by 
$$
K_{\varLambda}\,\,:=\,\,\Q\left(\left(\varLambda-\varLambda\right)\cup\left(\overline{\varLambda-\varLambda}\right)\right)\,.
$$

\subsection{Recollections from the theory of cyclotomic fields}

Let $K\subset\C$ be a field and let $\mu$ be the group of roots of unity in
$\C$. We denote the maximal real subfield $K\cap\R$ of $K$ by $K^+$ and set $\mu(K):=\mu\cap K$. For $n\in \mathbb{N}$, we always let $\zeta_n := e^{2\pi
    i/n}$, a primitive $n$th root of unity in $\C$. Then, $\Q(\zeta_n)$ is the
  $n$th cyclotomic field.  Further, $\phi$ will always denote Euler's totient function, i.e., $$\phi(n) =
\operatorname{card}\left(\big\{k \in \mathbb{N}\,\big |\,1 \leq k \leq n
  \textnormal{ and } \operatorname{gcd}(k,n)=1\big\}\right)\,.$$
Recall that $\phi$ is multiplicative with $\phi(p^r)=p^{r-1}(p-1)$ for
$p$ prime and $r\in\N$. 

\begin{fact}[Gau\ss]\cite[Thm.~ 2.5]{Wa}\label{gau}
 $[\Q(\zeta_n) :
  \Q] = \phi(n)$ and the field extension $\Q(\zeta_n)/ \Q$
is a Galois extension with Abelian Galois group $G(\Q(\zeta_n)/
\Q) \simeq (\Z / n\Z)^{\times}$,
with $a \pmod n$ corresponding to the automorphism given by $\zeta_n \mapsto \zeta_n^{a}$.\qed
\end{fact}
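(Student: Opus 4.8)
The plan is to prove this classical fact in the standard way: identify $\Q(\zeta_n)$ as the splitting field over $\Q$ of the separable polynomial $X^n-1$, show that the $n$th cyclotomic polynomial $\Phi_n(X)=\prod_{1\le k\le n,\ \gcd(k,n)=1}(X-\zeta_n^k)$ is the minimal polynomial of $\zeta_n$ over $\Q$, and then deduce the Galois statement by a counting argument. The degree formula is immediate once irreducibility of $\Phi_n$ is established, since $\deg\Phi_n=\phi(n)$.

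First I would record that $\Phi_n\in\Z[X]$: from $X^n-1=\prod_{d\mid n}\Phi_d(X)$ and induction on $n$, using division with remainder by monic polynomials in $\Z[X]$. The crux is the irreducibility of $\Phi_n$ over $\Q$. Let $f\in\Z[X]$ be the (monic) minimal polynomial of $\zeta_n$ over $\Q$; then $f\mid\Phi_n$ in $\Z[X]$, so it suffices to show that $\zeta_n^k$ is a root of $f$ for every $k$ with $\gcd(k,n)=1$. Factoring such a $k$ into primes, all of which are coprime to $n$, it is enough to prove: if $\zeta$ is any root of $f$ and $p$ is a prime with $p\nmid n$, then $\zeta^p$ is again a root of $f$. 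Suppose not; then the minimal polynomial $g$ of $\zeta^p$ is a monic irreducible factor of $\Phi_n$ distinct from $f$, so $fg\mid X^n-1$ in $\Z[X]$. Since $g(\zeta^p)=0$, the element $\zeta$ is a root of $g(X^p)$, whence $f\mid g(X^p)$ in $\Z[X]$. Reducing modulo $p$ and using the Frobenius identity $g(X^p)\equiv g(X)^p\pmod p$, we obtain $\bar f\mid\bar g^{\,p}$ in $\mathbb{F}_p[X]$; thus any irreducible factor of $\bar f$ also divides $\bar g$, and this common factor appears with multiplicity at least two in $\overline{fg}$, hence in $\overline{X^n-1}$. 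But $p\nmid n$ forces $n\ne 0$ in $\mathbb{F}_p$, so $\gcd(X^n-1,\,nX^{n-1})=1$ in $\mathbb{F}_p[X]$ and $X^n-1$ is separable modulo $p$ — a contradiction. Therefore $f=\Phi_n$, and $[\Q(\zeta_n):\Q]=\deg\Phi_n=\phi(n)$.

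Finally, since every $n$th root of unity is a power of $\zeta_n$, the field $\Q(\zeta_n)$ is the splitting field of the separable polynomial $X^n-1\in\Q[X]$, so $\Q(\zeta_n)/\Q$ is Galois. Any $\sigma\in G(\Q(\zeta_n)/\Q)$ permutes the roots of $\Phi_n$, hence $\sigma(\zeta_n)=\zeta_n^{a_\sigma}$ for a unique class $a_\sigma\in(\Z/n\Z)^{\times}$; from $\sigma\tau(\zeta_n)=\sigma(\zeta_n^{a_\tau})=\zeta_n^{a_\sigma a_\tau}$ one sees that $\sigma\mapsto a_\sigma$ is a group homomorphism $G(\Q(\zeta_n)/\Q)\to(\Z/n\Z)^{\times}$, injective because $\zeta_n$ generates $\Q(\zeta_n)$ over $\Q$, and surjective by comparing cardinalities, $|G(\Q(\zeta_n)/\Q)|=\phi(n)=\operatorname{card}((\Z/n\Z)^{\times})$. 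This gives the asserted isomorphism, and $(\Z/n\Z)^{\times}$ is Abelian. The only real obstacle in this argument is the irreducibility of $\Phi_n$; the mod-$p$ separability trick above is the one nontrivial input, and everything else is formal.
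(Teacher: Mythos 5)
Your proof is correct. The paper does not prove this statement at all --- it is quoted as a classical fact with a citation to Washington's book \cite[Thm.~2.5]{Wa} --- so there is no argument in the paper to compare against; what you have written is the standard Dedekind-style proof (irreducibility of $\Phi_n$ via the reduction mod $p$ and separability of $X^n-1$ over $\mathbb{F}_p$ for $p\nmid n$, followed by the counting argument identifying the Galois group with $(\Z/n\Z)^{\times}$), and it is essentially the proof given in the cited reference. All steps check out, including the reduction of the general exponent $k$ with $\gcd(k,n)=1$ to the prime case and the observation that a repeated factor of $\overline{fg}$ would contradict $\gcd(X^n-1,nX^{n-1})=1$ in $\mathbb{F}_p[X]$.
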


Note that the composition $\Q(\zeta_n)\Q(\zeta_m)=\Q(\zeta_n,\zeta_m)$
of cyclotomic fields is equal to the cyclotomic field 
$\Q(\zeta_{\operatorname{lcm}(n,m)})$. Further, the intersection
$\Q(\zeta_n)\cap\Q(\zeta_m)$ of cyclotomic fields is equal to the cyclotomic field 
$\Q(\zeta_{\operatorname{gcd}(n,m)})$.  Note that $\Q(\zeta_n)^+=\Q(\zeta_n+\bar{\zeta}_n)=\Q(\zeta_n+\zeta_n^{-1})$. Clearly, if $n$ divides $m$ then
$\Q(\zeta_n)$ is a subfield of $\Q(\zeta_m)$. Since
$\Q(\zeta_n)=\Q(\zeta_{2n})$ for odd $n$ by Fact~\ref{gau}, we may sometimes restrict ourselves to $n\in\N$ with $n\not\equiv 2 \pmod 4$.

\subsection{Cross ratios}

Let $(t_1,t_2,t_3,t_4)$ be an ordered tuple of four pairwise distinct
elements of $\mathbb{R}\cup\{\infty\}$. Then, its {\em cross ratio}
$\langle t_1,t_2,t_3,t_4\rangle$ is the nonzero real number defined by
$$
\langle t_1,t_2,t_3,t_4\rangle := \frac{(t_3 - t_1)(t_4 - t_2)}{(t_3 - t_2)(t_4 - t_1)}\,,
$$
with the usual conventions if one of the $t_i$ equals
$\infty$. We need the following invariance property of cross ratios of slopes.

\begin{fact}\cite[Lemma 2.17]{H}\label{crossratio}
Let $z_1,z_2,z_3,z_4\in \C^*$ be  pairwise
nonparallel and let $\Psi$ be a linear automorphism of
the complex plane. Then, one has
$$
\big\langle \operatorname{sl}(z_{1}),\operatorname{sl}(z_{2}),\operatorname{sl}(z_{3}),\operatorname{sl}(z_{4})\big\rangle = \big\langle \operatorname{sl}(\Psi(z_1)),\operatorname{sl}(\Psi(z_2)),\operatorname{sl}(\Psi(z_3)),\operatorname{sl}(\Psi(z_4))\big\rangle\,.\qed
$$  
\end{fact}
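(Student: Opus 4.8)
The plan is to reduce the assertion to the classical invariance of the cross ratio under real M\"obius transformations. First I would unwind the definition of $\operatorname{sl}$: writing $z=x+iy$, the formula $\operatorname{sl}(z)=-i(z-\bar z)/(z+\bar z)$ evaluates to $y/x$ (with value $\infty$ when $x=0$), so $\operatorname{sl}(z)$ is just the ordinary slope of the line through $0$ and $z$. Consequently two nonzero complex numbers are parallel exactly when their slopes agree, so the hypothesis that $z_1,\dots,z_4$ are pairwise nonparallel says precisely that $\operatorname{sl}(z_1),\dots,\operatorname{sl}(z_4)$ are pairwise distinct elements of $\R\cup\{\infty\}$; hence the left-hand cross ratio is defined.

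Next I would describe the action of $\Psi$ on slopes explicitly. Writing $\Psi(z)=az+b\bar z$ with $a,b\in\C$ and separating real and imaginary parts, one gets $\Psi(x+iy)=(\alpha x+\beta y)+i(\gamma x+\delta y)$ with real $\alpha,\beta,\gamma,\delta$, and the automorphy of $\Psi$ (i.e.\ $az+b\bar z=0$ only for $z=0$) is equivalent to $\alpha\delta-\beta\gamma=|a|^2-|b|^2\neq 0$. Therefore, with $t:=\operatorname{sl}(z)=y/x$,
$$
\operatorname{sl}(\Psi(z))=\frac{\gamma x+\delta y}{\alpha x+\beta y}=\frac{\gamma+\delta t}{\alpha+\beta t}=:m(t),
$$
a M\"obius transformation of $\R\cup\{\infty\}$ with real coefficients and nonzero determinant. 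In particular $m$ is a bijection of $\R\cup\{\infty\}$, so it sends the pairwise distinct $\operatorname{sl}(z_j)$ to pairwise distinct values, which shows that the right-hand cross ratio is likewise defined.

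Finally I would invoke --- or, to keep things self-contained, reprove --- that every such $m$ preserves $\langle\cdot,\cdot,\cdot,\cdot\rangle$. This is standard (it amounts to the fact that $\operatorname{PGL}_2(\R)$ acts on the projective line with the cross ratio as its basic invariant); a quick elementary route is to factor $m$ into the generators $t\mapsto t+c$, $t\mapsto ct$ with $c\in\R^*$, and $t\mapsto 1/t$, and to check directly from the defining formula of $\langle t_1,t_2,t_3,t_4\rangle$ that each leaves it unchanged, the case $t\mapsto 1/t$ being the only one needing a line of algebra. Chaining $\langle\operatorname{sl}(z_1),\dots\rangle=\langle m(\operatorname{sl}(z_1)),\dots\rangle=\langle\operatorname{sl}(\Psi(z_1)),\dots\rangle$ then yields the claim. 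I do not expect a genuine obstacle here; the only points demanding care are the bookkeeping translating $z\mapsto az+b\bar z$ into the real $2\times 2$ action on $(x,y)$ (in particular that invertibility matches $|a|^2-|b|^2\neq 0$), and the consistent handling of the value $\infty$ both in $\operatorname{sl}$ and in the cross-ratio identities.
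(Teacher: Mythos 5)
Your argument is correct: the computation $\operatorname{sl}(z)=y/x$, the identification of the induced action on slopes as the real M\"obius transformation $t\mapsto(\gamma+\delta t)/(\alpha+\beta t)$ with determinant $|a|^2-|b|^2\neq 0$, and the classical $\operatorname{PGL}_2(\R)$-invariance of the cross ratio together give a complete proof. The paper itself offers no proof of this statement (it is quoted as a Fact from \cite[Lemma 2.17]{H}), and your route is the standard one used there, so there is nothing to flag.
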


\begin{fact}\cite[Lemma 2.20]{H}\label{crkn4gen}
Let $\varLambda\subset\C$.  Then the
cross ratio of slopes of four pairwise nonparallel
$\varLambda$-directions is an element of $K_{\varLambda}^+$.\qed
\end{fact}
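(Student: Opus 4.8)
The plan is to reduce the statement to a short identity between complex numbers, using that the cross ratio is a M\"{o}bius invariant. Since a $\varLambda$-direction is by definition parallel to some nonzero element of $\varLambda-\varLambda$, and since $\operatorname{sl}(\lambda z)=\operatorname{sl}(z)$ for every $z\in\C^{*}$ and every $\lambda\in\R^{*}$, I may choose nonzero $z_{1},z_{2},z_{3},z_{4}\in\varLambda-\varLambda$ representing the four given directions. These are pairwise nonparallel, so the $\operatorname{sl}(z_{j})$ are pairwise distinct elements of $\R\cup\{\infty\}$ and their cross ratio is a well-defined nonzero real number; it therefore suffices to show that this real number lies in $K_{\varLambda}$.

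First I would rewrite the slope in terms of $w:=z/\bar{z}$, which always lies in $\mathbb{S}^{1}$. From $\operatorname{sl}(z)=-i(z-\bar{z})/(z+\bar{z})$ one gets $\operatorname{sl}(z)=f(w)$ with $f(w):=-i(w-1)/(w+1)$, a M\"{o}bius transformation (nondegenerate, since $-i\neq i$). Putting $w_{j}:=z_{j}/\bar{z}_{j}$, which are pairwise distinct because the $z_{j}$ are pairwise nonparallel, the invariance of the (complex) cross ratio under M\"{o}bius transformations, together with the fact that on real arguments it coincides with the cross ratio defined above, yields
$$\big\langle\operatorname{sl}(z_{1}),\operatorname{sl}(z_{2}),\operatorname{sl}(z_{3}),\operatorname{sl}(z_{4})\big\rangle=\frac{(w_{3}-w_{1})(w_{4}-w_{2})}{(w_{3}-w_{2})(w_{4}-w_{1})}\,.$$
Substituting $w_{k}-w_{j}=(z_{k}\bar{z}_{j}-z_{j}\bar{z}_{k})/(\bar{z}_{j}\bar{z}_{k})$ and cancelling the common product $\bar{z}_{1}\bar{z}_{2}\bar{z}_{3}\bar{z}_{4}$ from numerator and denominator leaves the closed form
$$\big\langle\operatorname{sl}(z_{1}),\operatorname{sl}(z_{2}),\operatorname{sl}(z_{3}),\operatorname{sl}(z_{4})\big\rangle=\frac{(z_{3}\bar{z}_{1}-z_{1}\bar{z}_{3})(z_{4}\bar{z}_{2}-z_{2}\bar{z}_{4})}{(z_{3}\bar{z}_{2}-z_{2}\bar{z}_{3})(z_{4}\bar{z}_{1}-z_{1}\bar{z}_{4})}\,.$$

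To conclude, note that $z_{j}\bar{z}_{k}-z_{k}\bar{z}_{j}$ vanishes exactly when $z_{j}$ and $z_{k}$ are parallel, so the denominator above is nonzero. All of $z_{1},\dots,z_{4}$ lie in $\varLambda-\varLambda\subseteq K_{\varLambda}$ and all of $\bar{z}_{1},\dots,\bar{z}_{4}$ lie in $\overline{\varLambda-\varLambda}\subseteq K_{\varLambda}$, so numerator and denominator lie in $K_{\varLambda}$ and hence the cross ratio lies in $K_{\varLambda}$. Being a priori a real number, it lies in $K_{\varLambda}\cap\R=K_{\varLambda}^{+}$, as asserted.

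The computation is essentially routine, and I expect the only mildly delicate points to be bookkeeping ones. The first is the degenerate case in which one of the four directions is vertical, i.e.\ some $\operatorname{sl}(z_{j})=\infty$ (equivalently $w_{j}=-1$); this is handled automatically by the M\"{o}bius-invariance argument, since cross ratios remain well defined and invariant when one argument is $\infty$, and alternatively one can check directly that the displayed closed form agrees with the value dictated by the usual conventions. The second, if one prefers an entirely elementary route avoiding M\"{o}bius maps, is to compute $\operatorname{sl}(z_{k})-\operatorname{sl}(z_{j})=-2i(z_{k}\bar{z}_{j}-z_{j}\bar{z}_{k})/\big((z_{k}+\bar{z}_{k})(z_{j}+\bar{z}_{j})\big)$ directly and to verify that, upon forming the cross ratio, the factors $-2i$ and all four factors $z_{j}+\bar{z}_{j}$ cancel, producing the same closed form; here one must simply keep track of signs and of which factors pair up.
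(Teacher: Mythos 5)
Your argument is correct: reducing to representatives $z_j\in\varLambda-\varLambda$, expressing the cross ratio of slopes as $\frac{(z_3\bar z_1-z_1\bar z_3)(z_4\bar z_2-z_2\bar z_4)}{(z_3\bar z_2-z_2\bar z_3)(z_4\bar z_1-z_1\bar z_4)}$, and observing that this visibly lies in $K_{\varLambda}\cap\R=K_{\varLambda}^{+}$ is exactly the natural computation, and you have handled the degenerate case $\operatorname{sl}(z_j)=\infty$ and the nonvanishing of the denominator. The paper itself gives no proof here but cites \cite[Lemma 2.20]{H}, and your derivation is essentially the standard one behind that reference.
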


\section{Algebraic Delone sets}

 The following notions will
be useful; see also~\cite{H3}, \cite{H4}, \cite{H5} for
generalisations and for results related to
those presented below.

\begin{defi}\label{algdeldef}
A Delone set $\varLambda\subset\C$ is called an {\em algebraic Delone
  set} if it satisfies the following properties:
\begin{eqnarray*}
\mbox{(Alg)}&&[K_{\varLambda}:\Q]<\infty\,.\\
\mbox{(Hom)}&&\mbox{For any finite subset $F$ of $K_{\varLambda}$, there is a
homothety}\\&&\mbox{$h$ of the complex plane such that $h(F)\subset \varLambda$\,. 
}
\end{eqnarray*}
Moreover, $\varLambda$ is called an {\em $n$-cyclotomic
  Delone set} if it satisfies the property
$$\mbox{($n$-Cyc)}\quad\quad K_{\varLambda}\subset\Q(\zeta_n)$$
for some $n\geq 3$
and has property (Hom).  Further,
$\varLambda$ is called a {\em cyclotomic Delone set} if it is an
$n$-cyclotomic Delone set for a suitable $n\geq 3$.
\end{defi} 

\begin{rem}\label{remalg}
Algebraic Delone
sets were already introduced in~\cite[Definition
4.1]{H5}. Clearly, for every algebraic Delone set $\varLambda$, the
field 
extension 
$K_{\varLambda}/\Q$ is an imaginary extension (due to $\varLambda$
being relatively dense) with $\overline{K_{\varLambda}}=K_{\varLambda}$. By the Kronecker-Weber theorem (cf.~\cite[Thm.~
14.1]{Wa}) and Fact~\ref{gau}, the cyclotomic Delone sets are precisely the algebraic
Delone sets $\varLambda$ with the additional property that
$K_{\varLambda}/\Q$ is an Abelian extension. 
\end{rem}

Following Moody~\cite{Moody}, modified along the lines of the algebraic setting of Pleasants~\cite{PABP}, we
define as follows.

\begin{defi}\label{algmodel}
Let $K\subset\C$ be an imaginary quadratic extension
of a real algebraic 
number field (necessarily, this real algebraic number field is $K^+$) of degree
$[K:\Q]=:d$ over $\Q$  (in particular, $d$ is even). Let
$\mathcal{O}_{K}$ be the ring of integers in $K$ and let $.^{\star}\!:\,
  \mathcal{O}_{K}\rightarrow \C^{s-1}\times\R^t$ be any map of the form $z\mapsto
  (\sigma_{2}(z),\dots,\sigma_{s}(z),\sigma_{s+1}(z),\dots,\sigma_{s+t}(z))$,
  where
$\sigma_{s+1},\dots,\sigma_{s+t}$ are the real embeddings of
$K/\Q$ into $\C/\Q$ and $\sigma_{2},\dots,\sigma_{s}$ arise from the complex embeddings of
$K/\Q$ into $\C/\Q$ except the identity and the complex conjugation by
choosing exactly one embedding 
from each pair of complex conjugate ones (in particular, $d=2s+t$ and
$s\geq 1$). Then, for any such choice,
each translate $\varLambda$ of
$$
\varLambda(W):=\{z\in\mathcal{O}_{K}\,|\,z^{\star}\in W\}\,,
$$  
where $W\subset
  \C^{s-1}\times\R^t\simeq\R^{d-2}$ is a relatively compact set with nonempty interior, is called a \emph{$K$-algebraic model set}. Moreover, $.^{\star}$ and $W$ are called the \emph{star
    map} and the \emph{window} of $\varLambda$, respectively. 
\end{defi}

\begin{rem}\label{modelrem}
Algebraic number fields $K$ as above may be obtained by starting with
a real algebraic number field $L$ and adjoining the square root of a
negative number from $L$. Note that, in the situation of Definition~\ref{algmodel}, the
quadratic extension $K/K^+$ is a Galois extension with $G(K/K^+)$
containing the identity and the complex conjugation (in particular,
one has $\overline{K}=K$). We use the
convention that for $d=2$ (meaning that $s=1$ and $t=0$), $\C^{s-1}\times\R^t$ is the trivial group $\{0\}$ and the star map is the zero
  map. Due to the Minkowski
representation 
$
\{(z,z^{\star})\,|\,z\in\mathcal{O}_{K}\}
$ of the maximal order $\mathcal{O}_{K}$ of $K$   
being a (full) lattice in $\C\times\C^{s-1}\times\R^t\simeq \R^d$ (cf.~\cite[Ch.~2,
Sec.~3]{Bo}) that is in one-to-one correspondence with
$\mathcal{O}_{K}$ via the canonical 
projection on the first factor and due to 
$\mathcal{O}_{K}^{\star}$ being a dense subset of
$\C^{s-1}\times\R^t$ (see Lemma~\ref{dense} below),
$K$-algebraic model sets are indeed {\em model sets} and thus
are 
Meyer sets; cf.~\cite{BM}, \cite{BM2}, \cite{Moody},  \cite{Schl2},  \cite{Schl} for the
  general setting and further properties of model sets. Since the
  star map is a monomorphism of Abelian groups for $d>2$ and since the
  window is a bounded set, a $K$-algebraic model set
  $\varLambda$ is periodic if and only if $d=2$, in which case $\varLambda$ is a
  translate of the planar lattice $\mathcal{O}_K$. 
\end{rem}

A real algebraic integer $\lambda$ is called a {\em Pisot-Vijayaraghavan number} ({\em
  PV-number}\/) if $\lambda>1$ while all other conjugates of
$\lambda$ have moduli strictly less than $1$. 

\begin{fact}\cite[Ch.~1, Thm.~ 2]{Sa}\label{pisot}
Every real algebraic number field contains a primitive element that is
a PV-number.\qed
\end{fact}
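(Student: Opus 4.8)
The plan is to construct the required element by means of the geometry of numbers; this is essentially the classical argument behind the cited statement. Let $L$ be a real algebraic number field of degree $n=[L:\Q]$, and assume $n\ge 2$, the case $L=\Q$ being settled by taking $2$. Since $L\subset\R$, the identity $\sigma_1=\mathrm{id}$ is one of the $n$ distinct embeddings $\sigma_1,\dots,\sigma_n\colon L\hookrightarrow\C$; say $r_1$ of them are real (so $\sigma_1$ is real) and the remaining $2r_2$ form $r_2$ complex conjugate pairs, whence $n=r_1+2r_2$. Choose an algebraic integer $\theta$ with $L=\Q(\theta)$; then the order $\mathcal{O}:=\Z[\theta]$ maps, under the Minkowski embedding $\Phi\colon L\to\R^{r_1}\times\C^{r_2}\cong\R^{n}$, onto a full lattice $\Phi(\mathcal{O})$ of some covolume $V>0$.

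First I would apply Minkowski's convex body theorem. For a parameter $t>0$, consider the centrally symmetric convex body $C_t\subset\R^{r_1}\times\C^{r_2}$ cut out by $|\sigma_1(\,\cdot\,)|\le t$, by $|\sigma_j(\,\cdot\,)|\le \tfrac12$ for the other $r_1-1$ real embeddings, and by $|\sigma_k(\,\cdot\,)|\le\tfrac12$ (complex modulus) for the $r_2$ complex embeddings. Its volume equals $2^{r_1}\pi^{r_2}\,t\,(\tfrac12)^{\,n-1}$, which exceeds $2^{n}V$ once $t$ is chosen sufficiently large. For such $t$, Minkowski's theorem produces a nonzero $\alpha\in\mathcal{O}$ with $|\sigma_j(\alpha)|\le\tfrac12<1$ for every $j\neq 1$. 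Since $\alpha$ is a nonzero algebraic integer, $N_{L/\Q}(\alpha)=\prod_{j=1}^{n}\sigma_j(\alpha)$ is a nonzero rational integer, so $\prod_{j=1}^{n}|\sigma_j(\alpha)|\ge 1$; combined with the preceding bounds this forces $|\sigma_1(\alpha)|\ge 2^{\,n-1}>1$. Replacing $\alpha$ by $-\alpha$ if necessary, we may assume $\alpha=\sigma_1(\alpha)>1$.

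It then remains to verify that $\alpha$ is a primitive element of $L$ and that all conjugates of $\alpha$ over $\Q$ other than $\alpha$ itself lie in the open unit disc; granting this, $\alpha$ is by definition a PV-number with $\Q(\alpha)=L$, which is the assertion. For primitivity I would argue by contradiction: if $m:=[\Q(\alpha):\Q]<n$, then each embedding of $\Q(\alpha)$ into $\C$ has $n/m\ge 2$ extensions to $L$, so besides $\sigma_1$ some $\sigma_i$ with $i\neq 1$ restricts to the identity on $\Q(\alpha)$ and hence satisfies $\sigma_i(\alpha)=\alpha>1$, contradicting $|\sigma_i(\alpha)|<1$. Therefore $\Q(\alpha)=L$, the conjugates of $\alpha$ over $\Q$ are exactly $\sigma_1(\alpha),\dots,\sigma_n(\alpha)$, and the inequalities above say precisely that $\alpha>1$ while all other conjugates have modulus $<1$.

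The step that really carries the argument is the passage from the smallness of the non-identity embeddings to the largeness of $\sigma_1(\alpha)$, which rests entirely on the integrality bound $|N_{L/\Q}(\alpha)|\ge 1$. The only genuine care needed is in packaging all the size constraints into a single symmetric convex body whose volume outgrows $2^{n}V$, and in the brief field-theoretic bookkeeping guaranteeing that the lattice point delivered by Minkowski's theorem is in fact primitive; neither of these presents a serious obstacle.
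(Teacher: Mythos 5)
Your argument is correct. The paper does not actually prove this statement---it is imported as a Fact with a reference to Salem's book---and your geometry-of-numbers proof (Minkowski's convex body theorem applied to a centrally symmetric box that forces all non-identity embeddings to be small, combined with the integrality bound $\lvert N_{L/\Q}(\alpha)\rvert\ge 1$ to make the identity embedding large, and the extension-counting argument for primitivity) is precisely the classical argument given in the cited source, so there is nothing to compare beyond noting that your write-up is a sound reconstruction of it.
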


Before we can show that $K$-algebraic model sets are
algebraic Delone sets, we need the following lemmata. 

\begin{lem}\label{r2}
Let $\varLambda$ be a nonperiodic $K$-algebraic model set
with star map $.^{\star}$. Then,  there is an algebraic integer
$\lambda\in K^+$ such that a suitable power of the $\Z$-module 
endomorphism $m_{\lambda}^{\star}$ of $\mathcal{O}_K^{\star}$,
defined by $m_{\lambda}^{\star}(z^{\star})=(\lambda z)^{\star}$,  
is contractive, i.e., there is an $l\in\N$ and a real number 
$c \in (0,1)$ such that $\Arrowvert (m_{\lambda}^{\star})^l(z^{\star})\Arrowvert\leq c\, \Arrowvert z^{\star}\Arrowvert$ holds for all $z\in \mathcal{O}_{K}$.
\end{lem}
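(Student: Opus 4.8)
The plan is to use the structure of $K/K^+$ as a quadratic extension together with Fact~\ref{pisot} applied to the real field $K^+$, and then to control how the star map acts on multiplication by an element of $K^+$. First I would pick, via Fact~\ref{pisot}, a primitive PV-element $\mu$ of $K^+$, so $\mu>1$ is a real algebraic integer all of whose other conjugates over $\Q$ have modulus strictly less than $1$. Since $\mu$ generates $K^+$ over $\Q$, we have $K^+=\Q(\mu)$ and hence $K=K^+(\sqrt{-\delta})$ for a suitable totally positive $\delta\in K^+$; the point is that $\mu\in\mathcal{O}_{K^+}\subset\mathcal{O}_K$, so $m_\mu^\star$ makes sense as a $\Z$-module endomorphism of $\mathcal{O}_K^\star$. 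The candidate for $\lambda$ will be $\mu$ itself (or a small modification, see below).

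Next I would make the action of $m_\mu^\star$ explicit in terms of the embeddings defining the star map. The map $.^\star$ records the images under $\sigma_2,\dots,\sigma_s$ (one from each pair of complex-conjugate embeddings of $K$) and the real embeddings $\sigma_{s+1},\dots,\sigma_{s+t}$. For each such embedding $\sigma_j$ of $K$, the restriction $\sigma_j|_{K^+}$ is an embedding of $K^+$ into $\R$, hence $\sigma_j(\mu)$ is a conjugate of $\mu$ over $\Q$ \emph{other than $\mu$ itself} — here we use that $\sigma_j\neq\mathrm{id}$ and $\sigma_j\neq$ complex conjugation, so that $\sigma_j|_{K^+}\neq\mathrm{id}_{K^+}$, which holds because $\mu$, being a primitive element of $K^+$, is moved by every nontrivial embedding of $K^+$. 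Consequently $|\sigma_j(\mu)|<1$ for every $j\in\{2,\dots,s+t\}$ by the PV-property. Since $m_\mu^\star(z^\star)=(\mu z)^\star=(\sigma_2(\mu)\sigma_2(z),\dots,\sigma_{s+t}(\mu)\sigma_{s+t}(z))$, the endomorphism $m_\mu^\star$ is, in the natural coordinates on $\C^{s-1}\times\R^t$, diagonal (blockwise) with scalar factors $\sigma_j(\mu)$ all of modulus $<1$.

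Then the contractivity is immediate: set $c_0:=\max_{2\le j\le s+t}|\sigma_j(\mu)|<1$. With respect to the sup-norm on $\C^{s-1}\times\R^t$ one gets $\Arrowvert m_\mu^\star(z^\star)\Arrowvert_\infty\le c_0\Arrowvert z^\star\Arrowvert_\infty$ directly, so already $l=1$ works for that norm; for the Euclidean norm $\Arrowvert\cdot\Arrowvert$ one passes through the equivalence of norms on the finite-dimensional space $\C^{s-1}\times\R^t\simeq\R^{d-2}$, obtaining a constant $C\ge 1$ with $\Arrowvert m_\mu^\star(z^\star)\Arrowvert\le C c_0\Arrowvert z^\star\Arrowvert$, and then iterating: $\Arrowvert (m_\mu^\star)^l(z^\star)\Arrowvert\le C c_0^{\,l}\Arrowvert z^\star\Arrowvert$, so choosing $l$ large enough that $c:=C c_0^{\,l}\in(0,1)$ finishes the proof with $\lambda=\mu$. (Note the nonperiodicity hypothesis guarantees $d>2$, so $\C^{s-1}\times\R^t$ is nontrivial and the statement is not vacuous; it also ensures there really is at least one embedding $\sigma_j$ to worry about.)

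The only genuinely delicate point — the ``main obstacle'' — is the claim that every embedding $\sigma_j$ occurring in the star map restricts to a \emph{nontrivial} embedding of $K^+$, equivalently that $|\sigma_j(\mu)|<1$ rather than $\sigma_j(\mu)=\mu$. For the real embeddings $\sigma_{s+1},\dots,\sigma_{s+t}$ this is clear since they are not the identity on $K$ and $K^+=\Q(\mu)$ is generated by $\mu$. For a complex embedding $\sigma_j$ with $2\le j\le s$, one must rule out $\sigma_j|_{K^+}=\mathrm{id}_{K^+}$: if it held, then $\sigma_j$ would fix $K^+$ pointwise and hence would lie in $G(K/K^+)=\{\mathrm{id},\ \text{complex conjugation}\}$ (using that $K/K^+$ is Galois, Remark~\ref{modelrem}), contradicting the choice of $\sigma_2,\dots,\sigma_s$ as embeddings distinct from the identity and from complex conjugation. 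Thus $\sigma_j|_{K^+}$ is a nontrivial embedding of $K^+$, $\sigma_j(\mu)$ is a conjugate of $\mu$ different from $\mu$, and the PV-property applies. I expect this Galois-theoretic bookkeeping, rather than any estimate, to be where the care is needed.
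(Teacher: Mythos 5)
Your proposal is correct and follows essentially the same route as the paper: choose a PV-number generating $K^+$ via Fact~\ref{pisot}, observe that each embedding $\sigma_j$ occurring in the star map cannot fix $K^+$ (else it would be the identity or complex conjugation), so all $\sigma_j(\lambda)$ have modulus $<1$, and conclude via the maximum norm and equivalence of norms. Your explicit iteration to absorb the norm-equivalence constant is just a slightly more detailed rendering of the paper's remark that it suffices to work with the maximum norm.
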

\begin{proof} 
By Fact~\ref{pisot}, we may choose a
PV-number $\lambda$ of degree $d/2=[K^+:\Q]$ in
$K^+$, where $d=[K:\Q]\geq 4$ due to the
nonperiodicity; see Remark~\ref{modelrem}. Since all norms on
$\C^{s-1}\times\R^t \simeq \R^{d-2}$ are equivalent, it suffices to prove the
assertion in case of the maximum norm on $\C^{s-1}\times\R^t$ with
respect to the absolute value on $\C$ and $\R$, respectively, rather than considering the Euclidean norm itself. But in that case, the assertion
follows immediately with $l:=1$ and $$c:=\operatorname{max}\big\{\lvert \sigma_{j}(\lambda)\rvert\,\big|
\, j\in\{2,\dots,s+t\}\big\}\,,$$ since the set
$\{\sigma_{2}(\lambda),\dots,\sigma_{s+t}(\lambda)\}$
of 
conjugates of $\lambda$ does not contain $\lambda$ itself. To see this, note
that $\sigma_j(\lambda)=\lambda$, where $j\in\{2,\dots,s+t\}$, implies
that $\sigma_j$ fixes $K^+$ whence $\sigma_j$  is the identity
or the complex conjugation, a contradiction; see~Definition~\ref{algmodel} and Remark~\ref{modelrem}. 
\end{proof}

\begin{lem}\label{dense}
Let $\varLambda$ be a $K$-algebraic model set
with star map $.^{\star}$ and let $d:=[K:\Q]$. Then
$\mathcal{O}_{K}^{\star}$ is dense in $\C^{s-1}\times\R^t\simeq\R^{d-2}$.  
\end{lem}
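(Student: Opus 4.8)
The plan is to deduce the density of $\mathcal{O}_K^{\star}$ from the fact that $\mathcal{O}_K$ is not merely a full $\Z$-lattice but a ring. If $d=2$ then, by the conventions of Remark~\ref{modelrem}, $\C^{s-1}\times\R^t$ is the trivial group and $.^{\star}$ is the zero map, so there is nothing to show; I would therefore assume $d\geq 4$ and set $H:=\overline{\mathcal{O}_K^{\star}}$, the closure of $\mathcal{O}_K^{\star}$ in $\R^{d-2}\simeq\C^{s-1}\times\R^t$. This $H$ is a closed subgroup of $\R^{d-2}$, and it suffices to prove $H=\R^{d-2}$.

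First I would note that $\mathcal{O}_K^{\star}$ already spans $\C^{s-1}\times\R^t$ over $\R$: by Remark~\ref{modelrem} the set $\{(z,z^{\star})\mid z\in\mathcal{O}_K\}$ is a full lattice in $\C\times\C^{s-1}\times\R^t$, so its $\R$-linear span is the whole space, and projecting onto the second factor yields $\operatorname{span}_{\R}\mathcal{O}_K^{\star}=\C^{s-1}\times\R^t$. In particular $H$ is contained in no proper $\R$-linear subspace of $\R^{d-2}$.

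The crucial point is that $H$ carries an $\mathcal{O}_K$-module structure. For $\alpha\in\mathcal{O}_K$ let $m_{\alpha}^{\star}$ be the $\R$-linear endomorphism of $\C^{s-1}\times\R^t$ with $m_{\alpha}^{\star}(z^{\star})=(\alpha z)^{\star}$, that is, coordinatewise multiplication by $(\sigma_2(\alpha),\dots,\sigma_{s+t}(\alpha))$. Since $\alpha\mathcal{O}_K\subseteq\mathcal{O}_K$ one has $m_{\alpha}^{\star}(\mathcal{O}_K^{\star})\subseteq\mathcal{O}_K^{\star}$, hence $m_{\alpha}^{\star}(H)\subseteq H$ by continuity; as $\alpha\mapsto m_{\alpha}^{\star}$ is a ring homomorphism, $H$ thereby becomes an $\mathcal{O}_K$-module. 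Let $V$ be the maximal $\R$-subspace contained in $H$, which coincides with the connected component of $0$ in $H$; being preserved by the continuous maps $m_{\alpha}^{\star}$, it is an $\mathcal{O}_K$-submodule. By the standard structure theorem for closed subgroups of $\R^{d-2}$, the quotient $H/V$ is a discrete subgroup of the real vector space $\R^{d-2}/V$, hence a finitely generated, torsion-free abelian group, and it inherits an $\mathcal{O}_K$-module structure. Consequently $(H/V)\otimes_{\Z}\Q$ is a vector space over $\mathcal{O}_K\otimes_{\Z}\Q=K$, so $\operatorname{rank}_{\Z}(H/V)=\dim_{\Q}\!\big((H/V)\otimes_{\Z}\Q\big)$ is a multiple of $[K:\Q]=d$. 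But $\operatorname{rank}_{\Z}(H/V)\leq\dim_{\R}(\R^{d-2}/V)\leq d-2<d$, so $\operatorname{rank}_{\Z}(H/V)=0$, and thus $H=V$. Being a subspace that, by the previous paragraph, is contained in no proper subspace, $H$ must equal $\R^{d-2}$; that is, $\mathcal{O}_K^{\star}$ is dense.

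The main obstacle is conceptual rather than computational: one must genuinely exploit the multiplicative structure of $\mathcal{O}_K$. The tempting shortcut — reading off density from the fact that $\{(z,z^{\star})\mid z\in\mathcal{O}_K\}$ is a full lattice in $\R^{d}$ — fails, since projections of lattices need not be dense. The ring structure is used precisely in making $H$ an $\mathcal{O}_K$-module, after which the whole argument turns on the elementary dimension count $\dim_{\R}(\C^{s-1}\times\R^t)=d-2<d=[K:\Q]$, which is exactly what forces the discrete part of $H$ to vanish.
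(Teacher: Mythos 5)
Your proof is correct, but it takes a genuinely different route from the paper's. The paper picks a PV-number $\lambda\in K^+$ of degree $d/2$ (Fact~\ref{pisot}) and uses the contractivity of a power of $m_{\lambda}^{\star}$ from Lemma~\ref{r2}: since each $\lambda^k\mathcal{O}_K$ is again a full $\Z$-module, its Minkowski image is a full lattice in $\R^{d}$, so its projection $(\lambda^k\mathcal{O}_K)^{\star}\subset\mathcal{O}_K^{\star}$ contains an $\R$-basis of $\C^{s-1}\times\R^{t}$; letting $k\to\infty$ produces $\R$-bases of arbitrarily small norm inside $\mathcal{O}_K^{\star}$, whence density. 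You replace this dynamical input by a soft structural one: the closure $H$ is a closed subgroup, hence of the form $V\oplus L$ with $V$ a subspace and $L$ discrete, and the $\mathcal{O}_K$-module structure forces the $\Z$-rank of $H/V$ to be divisible by $d>d-2$, hence zero. Both proofs must exploit the multiplicative structure of $\mathcal{O}_K$ (as you rightly note, fullness of the Minkowski lattice alone is not enough, since projections of lattices need not be dense); you use the action of the whole ring on $H$, the paper the action of a single well-chosen contracting element. Your route avoids Fact~\ref{pisot} and Lemma~\ref{r2} entirely and works verbatim for any order in $K$; the paper's route is more quantitative (it exhibits sublattices of $\mathcal{O}_K^{\star}$ with arbitrarily small covering radius) and reuses machinery that is needed anyway in Lemma~\ref{dilate}. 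The individual steps of your argument all check out: the spanning claim obtained by projecting the full Minkowski lattice, the invariance of $V$ under the maps $m_{\alpha}^{\star}$, and the identification $\mathcal{O}_K\otimes_{\Z}\Q=K$ that makes $(H/V)\otimes_{\Z}\Q$ a $K$-vector space.
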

\begin{proof}
If $d=2$, one even has $\mathcal{O}_{K}^{\star}=\C^{s-1}\times\R^t=\{0\}$.
 Otherwise, choose a PV-number $\lambda$
of degree $d/2$ in $K^+$; cf.~Fact~\ref{pisot}. Since
$\mathcal{O}_{K}$ is a full $\Z$-module in $K$, the set
$\{\lambda^kz\,|\,z\in\mathcal{O}_{K}\}$ is a full
$\Z$-module in $K$ for any $k\in\N$ . Thus the set $$\{(\lambda^k
z,(m_{\lambda}^{\star})^k(z^{\star}))\,|\,z\in\mathcal{O}_{K}\}\,,$$
is a (full) lattice in
$\C^s\times\R^t\simeq\R^d$ for any $k\in\N$, where
$m_{\lambda}^{\star}$ is the $\Z$-module 
endomorphism of $\mathcal{O}_K^{\star}$ from Lemma~\ref{r2}; cf.~\cite[Ch.~2,
Sec.~3]{Bo}. In conjunction with Lemma~\ref{r2}, this implies that,
for any $\varepsilon>0$, the $\Z$-module $\mathcal{O}_{K}^{\star}$
contains an $\R$-basis of $\C^{s-1}\times\R^t$ whose elements have
norms $\leq\varepsilon$. The assertion follows.
\end{proof}

\begin{lem}\label{dilate}
Let $\varLambda$ be a $K$-algebraic model set.
 Then, for any finite set $F\subset K$, there is a homothety $h$ of
 the complex plane such that $h(F)\subset \varLambda$. Moreover, $h$
 can be chosen such that $h(z)=
\kappa z + v$, where $\kappa\in K^+$ is an algebraic integer with $\kappa\geq 1$ and
$v\in\varLambda$.
\end{lem}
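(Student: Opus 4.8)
The plan is to obtain the required homothety as a composition of three elementary moves: clear denominators so that all points lie in $\mathcal{O}_K$, contract the star-images of the points of $F$ until they become tiny, and finally translate by a point of $\varLambda$ whose star-image lies deep inside the window.

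First I would reduce to the case $F\subset\mathcal{O}_K$. Since $K$ is a number field and $F$ is finite, there is an $N\in\N$ with $NF\subset\mathcal{O}_K$ (for each $f\in F$ a suitable positive integer clears its denominator; take the product of these). As $N\in\Z\subset K^+$ is an algebraic integer with $N\geq 1$, it is enough to prove the claim for $NF$: if $z\mapsto \mu z+v$ carries $NF$ into $\varLambda$, then $z\mapsto (\mu N)z+v$ carries $F$ into $\varLambda$, and $\mu N$ is again an algebraic integer in $K^+$ with $\mu N\geq 1$. So I may assume $F\subset\mathcal{O}_K$; I write $\varLambda=\varLambda(W)+t_0$ with star map $.^\star$ and window $W$, and fix an open ball $B_{2\rho}(p)\subset W$ in $\C^{s-1}\times\R^t$, which exists since $W$ has nonempty interior.

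Next I would produce an algebraic integer $\mu\in K^+$ with $\mu\geq 1$ and $\Arrowvert (\mu f)^\star\Arrowvert<\rho$ for every $f\in F$. If $d:=[K:\Q]=2$, the star map is the zero map and $\mu:=1$ works. If $d>2$, then $\varLambda$ is nonperiodic, and, using Fact~\ref{pisot} together with the argument in the proof of Lemma~\ref{r2}, I would pick a PV-number $\lambda\in K^+$ of degree $d/2$ --- an algebraic integer with $\lambda>1$ --- as well as $l\in\N$ and $c\in(0,1)$ with $\Arrowvert (m_\lambda^\star)^l(z^\star)\Arrowvert\leq c\,\Arrowvert z^\star\Arrowvert$ for all $z\in\mathcal{O}_K$. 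Iterating this bound gives $\Arrowvert (\lambda^{lk}f)^\star\Arrowvert\leq c^k\max_{f\in F}\Arrowvert f^\star\Arrowvert$ for all $k\in\N$, so for $k$ large enough that the right-hand side is $<\rho$ the element $\mu:=\lambda^{lk}$ does the job, being a power of $\lambda$ and hence still an algebraic integer in $K^+$ with $\mu\geq 1$.

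Finally, Lemma~\ref{dense} tells me $\mathcal{O}_K^\star$ is dense in $\C^{s-1}\times\R^t$, so I may choose $v_0\in\mathcal{O}_K$ with $v_0^\star\in B_\rho(p)$; then $v_0\in\varLambda(W)$, hence $v:=v_0+t_0\in\varLambda$. For every $f\in F$ one has $v_0+\mu f\in\mathcal{O}_K$ and $(v_0+\mu f)^\star=v_0^\star+(\mu f)^\star\in B_\rho(p)+B_\rho(0)\subset B_{2\rho}(p)\subset W$, so $v_0+\mu f\in\varLambda(W)$ and therefore $h(f)=\mu f+v\in\varLambda$, where $h(z):=\mu z+v$; since $\mu\in K^+\subset\R$ is positive and $v\in\C$, this $h$ is a homothety of the asserted form. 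The argument is routine once Lemmata~\ref{r2} and~\ref{dense} are at hand; the one delicate point is that the dilation factor must stay \emph{real} and be an algebraic integer $\geq 1$ throughout, which is exactly why the contraction is carried out by a power of a PV-number of $K^+$ and why the denominators are cleared by a positive rational integer.
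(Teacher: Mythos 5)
Your proof is correct and follows essentially the same route as the paper's: clear denominators with a positive integer, contract the star-images of the points using a power of a PV-number of $K^+$ via Lemma~\ref{r2}, and translate by a point of $\mathcal{O}_K$ whose star-image lies (by Lemma~\ref{dense}) suitably deep in the window. The only cosmetic difference is that you work with a ball $B_{2\rho}(p)\subset W$ split into two $\rho$-balls, whereas the paper contracts $(lF)^{\star}$ into the translated neighbourhood $W^{\circ}-z_{0}^{\star}$ of $0$; the substance is identical.
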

\begin{proof}
Without loss of generality, we may assume that $\varLambda$ is of the
form $\varLambda(W)$ (see Definition~\ref{algmodel}) and that
$F\neq\varnothing$. Note that there is an $l\in
\mathbbm{N}$ such that $\{lz\,|\,z\in F\} \subset
\mathcal{O}_{K}$. Let $d:=[K:\Q]$ and let
$.^{\star}$ be the star map of $\varLambda$. If $d=2$, we are done by
 setting $h(z):=lz$. Otherwise, since $W$ has nonempty interior, Lemma~\ref{dense} shows the existence of a
suitable $z_{0}\in \mathcal{O}_{K}$ with $z_{0}^{\star}\in W^{\circ}$. Consider the open neighbourhood $V:= W^{\circ} -  z_{0}^{\star}$ of $0$ in
$\C^{s-1}\times\R^t$ and choose a PV-number $\lambda$
of degree $d/2$ in $K^+$;
cf.~Fact~\ref{pisot}. By virtue of Lemma~\ref{r2}, there is a
$k\in\mathbbm{N}$ such
that $$(m_{\lambda}^{\star})^{k}\big((lF)^{\star}\big)\subset V\,.$$
It
follows that $\{(\lambda^{k} z + z_{0})^{\star}\, |\, z\in lF\}\subset
W^{\circ}$ and, further, that $h(F)\subset \varLambda$,
where $h$ is the homothety given by $z
\mapsto (l\lambda^{k}) z + z_{0}$. The additional statement follows
immediately from the observation that $z_0\in\varLambda$. 
\end{proof}

\begin{prop}\label{cmsads}
$K$-algebraic model sets are algebraic Delone
sets. Moreover, any $K$-algebraic model set $\varLambda$
satisfies $K_{\varLambda}=K$.
\end{prop}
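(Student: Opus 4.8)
The plan is to verify the two defining properties (Alg) and (Hom) of an algebraic Delone set and, in the same breath, to identify $K_\varLambda$; the underlying Delone property comes for free from Remark~\ref{modelrem}, where it is recorded that every $K$-algebraic model set is a model set, hence a Meyer set, and in particular a Delone set.

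First I would dispose of the easy inclusion $K_\varLambda \subseteq K$. Since $\varLambda$ is by definition a translate of some $\varLambda(W) \subseteq \mathcal{O}_K$, its difference set satisfies $\varLambda - \varLambda = \varLambda(W) - \varLambda(W) \subseteq \mathcal{O}_K \subseteq K$, and because $\overline{K} = K$ (Remark~\ref{modelrem}) one also has $\overline{\varLambda - \varLambda} \subseteq K$; hence $K_\varLambda = \Q\big((\varLambda - \varLambda) \cup \overline{\varLambda - \varLambda}\big) \subseteq K$. In particular $[K_\varLambda : \Q] \leq [K : \Q] < \infty$, which is property (Alg).

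The substance of the proof is the reverse inclusion $K \subseteq K_\varLambda$, and here Lemma~\ref{dilate} is exactly the tool needed. Since $\Q \subseteq K_\varLambda$ trivially, it suffices to treat $z \in K \setminus \Q$, so that $F := \{0, 1, z\}$ is a three-element subset of $K$. Applying Lemma~\ref{dilate} to $F$ yields a homothety $h$ of the form $h(w) = \kappa w + v$ with $\kappa \in K^+$ an algebraic integer, $\kappa \geq 1$, $v \in \varLambda$, and $h(F) \subseteq \varLambda$. Then $\kappa = h(1) - h(0)$ and $\kappa z = h(z) - h(0)$ both lie in $\varLambda - \varLambda \subseteq K_\varLambda$; as $\kappa \neq 0$ and $K_\varLambda$ is a field, $z = (\kappa z)/\kappa \in K_\varLambda$. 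Thus $K \subseteq K_\varLambda$, and combined with the previous paragraph $K_\varLambda = K$.

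It remains to record property (Hom), but this is now immediate: for any finite $F \subseteq K_\varLambda = K$, Lemma~\ref{dilate} furnishes a homothety $h$ with $h(F) \subseteq \varLambda$. Together with (Alg) and the Delone property this shows that $\varLambda$ is an algebraic Delone set with $K_\varLambda = K$. The only genuinely delicate point is the inclusion $K \subseteq K_\varLambda$: one must see that the interpoint vectors of the model set already generate all of $K$ over $\Q$, and the key device is to feed $0$ and $1$ into the finite set alongside $z$, so that both the a priori unknown scaling factor $\kappa$ and the product $\kappa z$ appear as interpoint vectors, which then lets one divide out $\kappa$.
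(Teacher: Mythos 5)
Your proposal is correct and follows essentially the same route as the paper: Delone from Remark~\ref{modelrem}, the inclusion $K_{\varLambda}\subseteq K$ from $\varLambda-\varLambda\subseteq\mathcal{O}_K$ and $\overline{K}=K$ (giving (Alg)), and both (Hom) and the reverse inclusion from Lemma~\ref{dilate}. The only (cosmetic) difference is in the last step, where the paper transports a whole $\Q$-basis of $K$ into $\varLambda-\{v\}$ and counts dimensions, while you feed $\{0,1,z\}$ into Lemma~\ref{dilate} and divide $\kappa z$ by $\kappa$ inside the field $K_{\varLambda}$; both are valid.
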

\begin{proof}
Since $K_{\varLambda}=K_{t+\varLambda}$ for any $t\in\C$, we may
assume that $\varLambda$ is of the
form $\varLambda(W)$ (see Definition~\ref{algmodel}). Any $K$-algebraic model set $\varLambda$ is a Delone 
set by Remark~\ref{modelrem}. Property (Alg) follows from the
observation that $K_{\varLambda}\subset K$ (recall that
$\varLambda-\varLambda\subset\mathcal{O}_K$ and that $\overline{K}=K$). Further,
property (Hom) is an immediate consequence of Lemma~\ref{dilate}.  Let
$\{\alpha_1,\dots,\alpha_d\}$ be a $\Q$-basis of $K/\Q$. By the additional statement of
Lemma~\ref{dilate} there is a nonzero element $\kappa\in K^+$ and a
point 
$v\in\varLambda$ such
that the
$\Q$-linear independent set
$\{\kappa\alpha_1,\dots,\kappa\alpha_d\}$ is contained in
$\varLambda-\{v\}\subset K_{\varLambda}$. Since
$K_{\varLambda}\subset K$, this shows that $K_{\varLambda}=K$.   
\end{proof}

\begin{rem}\label{okdirections}
As another immediate consequence of Lemma~\ref{dilate}, one verifies
that, for any $K$-algebraic model set $\varLambda$, the set of 
$\varLambda$-directions is precisely the set of
 $\mathcal{O}_{K}$-directions. 
\end{rem}

\begin{ex}\label{algex}
Standard examples of $n$-cyclotomic Delone sets are the $\Q(\zeta_n)$-algebraic
  model sets, where $n\geq 3$, which from now on are called {\em $n$-cyclotomic
  model sets}; cf.~Fact~\ref{gau} and Proposition~\ref{cmsads} (note
also that $\Q(\zeta_n)$ is obtained from $\Q(\zeta_n)^+$ by adjoining
the square root of the negative number $\zeta_n^2+\zeta_n^{-2}-2\in
\Q(\zeta_n)^+$, the latter being the discriminant of $X^2-(\zeta_n+\zeta_n^{-1})X+1$). These sets were also called {\em cyclotomic model sets with underlying
$\Z$-module $\Z[\zeta_n]$} in~\cite[Sec.~4.5]{H5}, since $\Z[\zeta_n]$
is the ring of integers in the $n$th cyclotomic field;
cf.~\cite[Thm.~ 2.6]{Wa}. The latter 
range from periodic examples like the fourfold square lattice ($n=4$)
or the sixfold triangular lattice ($n=3$) to nonperiodic
examples like the vertex set of the tenfold T\"ubingen triangle
tiling~\cite{bk1},  \cite{bk2} ($n=5$), the eightfold Ammann-Beenker tiling
of the plane~\cite{am},  \cite{bj},  \cite{ga} ($n=8$) or the twelvefold
shield tiling~\cite{ga}
($n=12$);
see~\cite[Fig.~1]{H4}, ~\cite[Fig.~2]{H5} and Fig.~\ref{fig:tilingupolygon} below for
illustrations. In general, for any divisor $m$ 
  of $\operatorname{lcm}(n,2)$, one can choose the window such that the
corresponding $n$-cyclotomic
  model sets have $m$-fold cyclic symmetry in the
  sense of symmetries of LI-classes, meaning that a discrete structure
  has a certain symmetry if the original and the transformed structure
  are locally indistinguishable; cf.~\cite{B} for details. Note that  the vertex sets of the famous Penrose tilings of the
plane fail to be $5$-cyclotomic model sets but can still be seen to be $5$-cyclotomic Delone sets; see~\cite{bh}
and references therein. 
\end{ex}

\section{A cyclotomic theorem}\label{cyc}

\begin{defi}\label{fmddefi}
Let $m\geq 4$ be a natural number. Set
$$
D_{m}:=\big\{(k_1,k_2,k_3,k_4)\in \mathbb{N}^4 \,\big|\, k_3<k_1\leq
k_2<k_4\leq m-1 \mbox{ and } k_1+k_2=k_3+k_4\big\}
$$ 
and define the function $f_{m}\,:\, D_{m}\rightarrow \C^*$ by
\begin{equation}\label{fmd}
f_{m}(k_1,k_2,k_3,k_4):=\frac{(1-\zeta_{m}^{k_1})(1-\zeta_{m}^{k_2})}{(1-\zeta_{m}^{k_3})(1-\zeta_{m}^{k_4})}.
\end{equation}
We further set $\mathcal{C}_m:=f_m(D_m)$ (note that $\mathcal{C}_m\subset
\mathcal{C}_{m'}$ for any multiple $m'$ of $m$) and
$\mathcal{C}:=\bigcup_{m\geq 4}\mathcal{C}_m$. Moreover, for a subset 
$K$ of $\C$, we set $\mathcal{C}(K):=\mathcal{C}\cap K$ and
$\mathcal{C}_m(K):=\mathcal{C}_m\cap K$.
\end{defi}

\begin{fact}\label{fmdg1}\cite[Lemma 3.1]{GG} 
Let $m\geq 4$. The function
$f_{m}$ is real-valued. Moreover, one has $f_{m}(d)>1$ for all $d\in
D_{m}$.\qed
\end{fact}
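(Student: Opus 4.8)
The plan is to rewrite the rational function $f_m$ as a ratio of sines and then invoke the strict monotonicity of the cosine. First I would use the elementary factorisation
\[
1-\zeta_m^{k}=-2i\,\zeta_{2m}^{k}\sin(\pi k/m),
\]
valid for every integer $k$. Substituting this into~\eqref{fmd} and using the defining relation $k_1+k_2=k_3+k_4$ of $D_m$, the four powers of $\zeta_{2m}$ combine to $\zeta_{2m}^{\,k_1+k_2-k_3-k_4}=1$ and the factors $(-2i)^{2}$ cancel, leaving
\[
f_m(k_1,k_2,k_3,k_4)=\frac{\sin(\pi k_1/m)\,\sin(\pi k_2/m)}{\sin(\pi k_3/m)\,\sin(\pi k_4/m)}\,.
\]
Since every $k_i$ lies in $\{1,\dots,m-1\}$, each angle $\pi k_i/m$ lies in $(0,\pi)$ and hence each sine is strictly positive; in particular $f_m$ is real-valued (indeed positive), which settles the first assertion.

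For the bound $f_m(d)>1$ it suffices to prove $\sin(\pi k_1/m)\sin(\pi k_2/m)>\sin(\pi k_3/m)\sin(\pi k_4/m)$. I would apply the product-to-sum identity $\sin A\sin B=\tfrac12\big(\cos(A-B)-\cos(A+B)\big)$ to both sides; because $k_1+k_2=k_3+k_4$ the two $\cos(A+B)$-terms agree and cancel, so the inequality becomes
\[
\cos\!\big(\pi(k_2-k_1)/m\big)>\cos\!\big(\pi(k_4-k_3)/m\big)\,,
\]
where I have used that cosine is even to replace $k_1-k_2$ by $k_2-k_1\ge 0$ and $k_3-k_4$ by $k_4-k_3>0$ (note $k_3<k_4$ since $k_3<k_1\le k_2<k_4$). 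The bounds $1\le k_3<k_1\le k_2<k_4\le m-1$ force $0\le k_2-k_1\le m-2$ and $0<k_4-k_3\le m-2$, so both arguments lie in $[0,\pi)$, on which cosine is strictly decreasing; thus the displayed inequality is equivalent to $k_2-k_1<k_4-k_3$.

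This last inequality is immediate from the constraints defining $D_m$: writing $s:=k_1+k_2=k_3+k_4$ one has $k_2-k_1=s-2k_1$ and $k_4-k_3=s-2k_3$, and $k_3<k_1$ gives $s-2k_3>s-2k_1$, i.e.\ $k_4-k_3>k_2-k_1$, as needed. I do not expect a genuine obstacle here: the reduction of $f_m$ to a sine-ratio is routine, and the only point demanding slight care is verifying that the angle-differences stay in the interval $[0,\pi)$ on which cosine is strictly monotone — which is exactly what the inequalities packaged into the definition of $D_m$ guarantee.
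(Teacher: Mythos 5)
Your argument is correct: the factorisation $1-\zeta_m^k=-2i\,\zeta_{2m}^k\sin(\pi k/m)$, the cancellation via $k_1+k_2=k_3+k_4$, and the product-to-sum reduction to the strict monotonicity of cosine on $[0,\pi)$ all check out, and the defining inequalities of $D_m$ do keep every angle in the required range. The paper itself states this as a Fact and defers entirely to \cite[Lemma 3.1]{GG} without reproducing a proof; your sine-quotient argument is essentially the standard one given in that reference, so there is nothing to add.
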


For our application to discrete tomography, we shall below show the
{\em finiteness} of the
set $\mathcal{C}(L)$ 
 for all real algebraic number fields $L$ and provide
 explicit results in the three cases $\Q(\zeta_{5})^+=\Q(\sqrt{5})$,
 $\Q(\zeta_8)^+=\Q(\sqrt{2})$ and 
 $\Q(\zeta_{12})^+=\Q(\sqrt{3})$. Gardner and Gritzmann showed the
 following result for the field $\Q=\Q(\zeta_3)^+=\Q(\zeta_4)^+$.

\begin{theorem}\cite[Lemma 3.8, Lemma 3.9 and Thm.~ 3.10]{GG}\label{intersectq}
$$
\mathcal{C}(\Q)=\mathcal{C}_{12}(\Q)=\big\{\tfrac{4}{3},\tfrac{3}{2},2,3,4\big\}\,.
$$
Moreover, all solutions of $f_{m}(d)=q\in\Q$, where $m\geq 4$ and
$d\in D_{m}$, are either given, up to multiplication of $m$
and $d$ by the same factor, by $m=12$ and one of the following
$$
\begin{array}{rlrl}
\textnormal{(i)}&d=(6,6,4,8),q=\frac{4}{3};&\textnormal{(ii)}&d=(6,6,2,10),q=4;\\
\textnormal{(iii)}&d=(4,8,3,9),q=\frac{3}{2};&\textnormal{(iv)}&d=(4,8,2,10),q=3;\\
\textnormal{(v)}&d=(4,4,2,6),q=\frac{3}{2};&\textnormal{(vi)}&d=(8,8,6,10),q=\frac{3}{2};\\
\textnormal{(vii)}&d=(4,4,1,7),q=3;&\textnormal{(viii)}&d=(8,8,5,11),q=3;\\
\textnormal{(ix)}&d=(3,9,2,10),q=2;&\textnormal{(x)}&d=(3,3,1,5),q=2;\\
\textnormal{(xi)}&d=(9,9,7,11),q=2;&&
\end{array}
$$
or by one of the following
$$
\begin{array}{rl}
\textnormal{(xii)}&d=(2k,s,k,k+s),q=2, \mbox{ where } s\geq 2, m=2s \mbox{
  and } 1\leq k\leq \frac{s}{2};\\
\textnormal{(xiii)}&d=(s,2k,k,k+s),q=2, \mbox{ where } s\geq 2, m=2s \mbox{
  and } \frac{s}{2}\leq k< s.\qed
\end{array}
$$
\end{theorem}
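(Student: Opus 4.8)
The idea is to translate the rationality of $f_m(d)$ into a classical question on vanishing sums of roots of unity, read off our very restrictive situation from the general classification of such sums, and finish with a finite computation at $m=12$.

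\emph{Reduction.} Since $f_m(k_1,k_2,k_3,k_4)$ depends only on the $k_i$ modulo $m$ and is unchanged when $m$ and all four $k_i$ are divided by a common factor, one may assume $\gcd(k_1,k_2,k_3,k_4,m)=1$; by Fact~\ref{fmdg1}, $q:=f_m(d)>1$, so $q\neq 1$. Writing $\omega_i:=\zeta_m^{k_i}$ and using $\omega_1\omega_2=\zeta_m^{k_1+k_2}=\zeta_m^{k_3+k_4}=\omega_3\omega_4=:P$, the equation $f_m(d)=q$ becomes, after clearing denominators and rearranging,
\[
(1-q)\cdot 1+(1-q)\cdot P-\omega_1-\omega_2+q\cdot\omega_3+q\cdot\omega_4 = 0\, ,
\]
a vanishing $\Q$-linear combination of at most six roots of unity with nonzero rational coefficients $\{1-q,1-q,-1,-1,q,q\}$; equivalently, via the product-to-sum formula, $f_m(d)=q$ is the rational relation $\cos(\pi\alpha/m)-q\cos(\pi\beta/m)-(1-q)\cos(\pi\gamma/m)=0$ among three cosines of rational multiples of $\pi$, where $\alpha=k_2-k_1$, $\beta=k_4-k_3$, $\gamma=k_1+k_2$.

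\emph{Classification.} Every such relation decomposes into \emph{minimal} vanishing sums of roots of unity, and these are classified by the work of Conway and Jones on trigonometric Diophantine equations (extending Mann): the minimal ones are the regular $p$-gon relations $\nu(1+\zeta_p+\dots+\zeta_p^{p-1})=0$ ($p$ prime, $\nu$ a root of unity), together with a short sporadic list all of whose entries are built from roots of unity of orders dividing $30$. The crux of the proof is a case analysis of which minimal sums can combine --- subject to the constraint $\omega_1\omega_2=\omega_3\omega_4$ and to $q>1$ --- into our six-term relation. The rigid coefficient multiset $\{1-q,1-q,-1,-1,q,q\}$, which for $q\neq 2$ consists of three distinct values each occurring twice, is incompatible with any regular $p$-gon for $p\geq 5$ and with every sporadic relation, so only $2$-gons (antipodal pairs $\nu+(-\nu)=0$) and $3$-gons can occur. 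One then distinguishes two cases. If no $3$-gon occurs, the relation is a union of antipodal pairs; this forces one of the indices $k_i$ to equal $m/2$, and then the identities $1-\zeta_m^{2k}=(1-\zeta_m^{k})(1+\zeta_m^{k})$ and $1-\zeta_m^{k+m/2}=1+\zeta_m^{k}$ collapse the equation to $f_m(d)=2$, which on spelling out the inequalities defining $D_m$ yields exactly the one-parameter families (xii) and its mirror image (xiii). If a $3$-gon does occur, then $\zeta_3$, hence the prime $3$, enters the picture; a more delicate analysis --- using the normalisation $\gcd(k_1,k_2,k_3,k_4,m)=1$, and ruling out the prime $5$ because a surviving $\sqrt5$ would contradict $q\in\Q$ --- then forces $m\mid 12$.

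\emph{Finite check.} For $m\mid 12$ only finitely many $d\in D_m$ remain, and direct enumeration produces, up to rescaling, precisely the tuples (i)--(xi) at $m=12$; evaluating $f_{12}$ on them by means of $|1-\zeta_{12}^k|=2\sin(\pi k/12)$, $\zeta_{12}^6=-1$, $\zeta_{12}^4=\zeta_3$ and $\zeta_{12}^3=i$ gives $f_{12}(6,6,4,8)=\tfrac43$, $f_{12}(6,6,2,10)=4$, $f_{12}(4,8,3,9)=f_{12}(4,4,2,6)=f_{12}(8,8,6,10)=\tfrac32$, $f_{12}(4,8,2,10)=f_{12}(4,4,1,7)=f_{12}(8,8,5,11)=3$ and $f_{12}(3,9,2,10)=f_{12}(3,3,1,5)=f_{12}(9,9,7,11)=2$. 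The families (xii), (xiii) contribute only the value $2$, already attained at $m=12$; collecting all values gives $\mathcal{C}(\Q)=\{\tfrac43,\tfrac32,2,3,4\}$, and since each of these five numbers already occurs at $m=12$ while $\mathcal{C}_{12}(\Q)\subseteq\mathcal{C}(\Q)$, also $\mathcal{C}_{12}(\Q)=\{\tfrac43,\tfrac32,2,3,4\}$. The main obstacle is the classification step --- squeezing the precise dichotomy out of the Conway--Jones list, in particular the bookkeeping separating the degenerate families (xii), (xiii) from the rigid $m\mid 12$ cases and the exclusion of every prime other than $2$ and $3$; the reduction beforehand and the finite verification afterwards are routine.
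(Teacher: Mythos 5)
First, note that the paper does not prove this statement at all: it is imported verbatim from Gardner and Gritzmann \cite[Lemma 3.8, Lemma 3.9 and Thm.~3.10]{GG} and used as a black box. Your strategy --- rewrite $f_m(d)=q$ as the vanishing sum $(1-q)+(1-q)P-\omega_1-\omega_2+q\omega_3+q\omega_4=0$, equivalently as the three-cosine relation $\cos(\pi\alpha/m)-q\cos(\pi\beta/m)-(1-q)\cos(\pi\gamma/m)=0$, and then invoke the Conway--Jones classification of rational relations among cosines of rational angles --- is exactly the route taken in the cited source, and your reduction and your finite verification at $m=12$ are both correct. (It is worth knowing that the present paper deliberately develops an alternative, Conway--Jones-free route: the remark after Theorem~\ref{t1} points out that Lemma~\ref{l3}, a comparison-of-coefficients argument in towers of cyclotomic fields, re-proves the ``$q\ne 2$ forces $m\mid 12$'' part of this theorem.)

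The genuine gap is that the step you yourself call the crux is asserted rather than performed, and the one shortcut you do offer does not work as stated. You claim the coefficient multiset $\{1-q,1-q,-1,-1,q,q\}$ is ``incompatible with any regular $p$-gon for $p\geq 5$ and with every sporadic relation.'' But the decomposition of a vanishing sum of roots of unity into minimal sums applies to sums with coefficients in $\N$: since $q$ is merely a rational number $>1$ (e.g.\ $q=\tfrac43$ or $\tfrac32$ do occur), you must first clear denominators and split each term into repeated unit-coefficient terms, after which a single original term contributes many copies of one root of unity and can perfectly well feed a $p$-gon relation with $p\geq 5$; ruling this out is precisely the content of the Conway--Jones bookkeeping (range constraints $q>1$, the linear constraint $k_1+k_2=k_3+k_4$, the ordering $k_3<k_1\le k_2<k_4$, and irrationality of the surviving $\sqrt5$, $\cos(\pi/7)$, etc.), none of which you carry out. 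Likewise, the dichotomy ``no $3$-gon $\Rightarrow$ union of antipodal pairs $\Rightarrow$ some $k_i=m/2$ $\Rightarrow$ families (xii)/(xiii)'' and ``a $3$-gon occurs $\Rightarrow m\mid 12$'' is plausible (and consistent with the answer: (ix)--(xi) show that $q=2$ also arises sporadically at $m=12$, so the dichotomy cannot simply be read off from the value of $q$), but it is exactly the case analysis that constitutes the proof. As written, the argument establishes the easy inclusion $\{\tfrac43,\tfrac32,2,3,4\}\subset\mathcal{C}_{12}(\Q)$ and reduces the converse to an unproven classification claim.
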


The next three lemmata are the key tools for our approach.

\begin{lem}\label{l1}
Let $a\in\R^*$. If $a=\tfrac{1+x}{1+y}$ for $x,y\in\mu\cup\{0\}$ with
$y\neq -1$ then
$a\in\{\tfrac{1}{2},1,2\}$.
\begin{proof}
It suffices to consider the cases $a=1+\omega$ and
$a=\tfrac{1+\omega_1}{1+\omega_2}$ with
$\omega,\omega_1,\omega_2\in\mu$ and $\omega_2\neq -1$. In the
first case, one has $\omega=a-1\in\mu(\R)=\{\pm 1\}$ whence $\omega=1$ (due
to $a\neq 0$) and $a=2$. In the second case, one has
$$
a=\bar{a}=\frac{1+\bar{\omega}_1}{1+\bar{\omega}_2}=\omega_2\omega_1^{-1}\frac{1+\omega_1}{1+\omega_2}=\omega_2\omega_1^{-1}a
$$ 
wherefore $\omega_1=\omega_2$ and $a=1$.
\end{proof}
\end{lem}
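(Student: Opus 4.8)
The plan is to exploit two elementary facts: that $a$ is \emph{real}, so $a=\bar a$, and that every $\omega\in\mu$ satisfies $\bar\omega=\omega^{-1}$. First I would record two immediate reductions. Since $a\neq 0$ and $a=\tfrac{1+x}{1+y}$, the numerator cannot vanish, so $x\neq -1$; and $y\neq -1$ by hypothesis. Moreover, if $x=y=0$ then $a=1$ and there is nothing to prove, so we may assume that at least one of $x,y$ is a genuine root of unity.

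Then I would run through the three remaining cases according to which of $x,y$ equals $0$. If $y=0$ (hence $x\in\mu$), then $a=1+x$ is real, so $x=a-1$ is a real root of unity; the only such are $\pm 1$, and $x=-1$ is excluded, so $x=1$ and $a=2$. If $x=0$ and $y\in\mu$, then from $a=\tfrac{1}{1+y}$ and $\bar y=y^{-1}$ one gets $\bar a=\tfrac{1}{1+y^{-1}}=\tfrac{y}{1+y}=ya$; since $a\neq 0$ this forces $y=1$ and $a=\tfrac{1}{2}$. If $x,y\in\mu$, then conjugating and using $\bar x=x^{-1}$, $\bar y=y^{-1}$ gives $\bar a=\tfrac{1+x^{-1}}{1+y^{-1}}=\tfrac{y}{x}\cdot\tfrac{1+x}{1+y}=\tfrac{y}{x}\,a$, so once more $a\neq 0$ yields $x=y$ and hence $a=1$. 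Together with the trivial case this establishes $a\in\{\tfrac{1}{2},1,2\}$.

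I do not expect a genuine obstacle here: the argument is a bounded case distinction, and the only points requiring care are (i) not conflating the value $0$ with the root of unity $-1$ in the degenerate branches, and (ii) using $\bar\omega=\omega^{-1}$ only for $\omega$ actually on $\mathbb{S}^1$. If desired, the write-up can be shortened by noting that $a\mapsto 1/a$ carries solutions to solutions (it merely interchanges the roles of $x$ and $y$, both of which are $\neq -1$) and fixes the target set $\{\tfrac{1}{2},1,2\}$ setwise; modulo this symmetry only the cases ``$x\in\mu$, $y=0$'' and ``$x,y\in\mu$'' survive. Since there are only these few cases in all, however, the saving is merely cosmetic.
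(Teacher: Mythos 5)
Your proof is correct and follows essentially the same route as the paper's: conjugate, use $\bar\omega=\omega^{-1}$ for $\omega\in\mu$ and the fact that the only real roots of unity are $\pm 1$, and conclude case by case. The only difference is cosmetic: the paper compresses the four branches into two (implicitly invoking the $a\mapsto 1/a$ symmetry you mention at the end), whereas you spell all of them out.
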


\begin{lem}[Comparison of coefficients]\label{l2}
Let $K\subset\C$ be a field, let $m\in\N$, and let $\zeta\in\mu$ with
$\zeta^m\in K$. Let
$a_0,\dots,a_{m-1},b_0,\dots,b_{m-1}\in K$ with
$$
\sum_{i=0}^{m-1}a_i\zeta^i=\sum_{i=0}^{m-1}b_i\zeta^i\,.
$$
Then one has $a_i=b_i$ for all $i=0,\dots,m-1$ if one of the following
conditions holds.
\begin{itemize}
\item[(a)]
$[K(\zeta) :K]=m$.
\item[(b)]
$[K(\zeta) :K]=m-1$ and at most $m-1$ of $a_0,\dots,a_{m-1},b_0,\dots,b_{m-1}$ are nonzero.
\end{itemize}
Moreover, if $[K(\zeta) :K]=m-1$ and $a_k-b_k\neq 0$ for some $k$ then
$|a_i-b_i|=|a_j-b_j|\neq 0$ for all $i,j$. 
\end{lem}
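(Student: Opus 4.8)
\emph{Plan.} Set $c_i:=a_i-b_i\in K$ for $i=0,\dots,m-1$, so that the hypothesis becomes $\sum_{i=0}^{m-1}c_i\zeta^i=0$; the whole statement then reduces to understanding the $K$-linear relations among the powers $1,\zeta,\dots,\zeta^{m-1}$. The key preliminary observation is that $\zeta^m\in K$ forces every power of $\zeta$ to lie in $\sum_{i=0}^{m-1}K\zeta^i$ (reduce modulo $\zeta^m$), so $\{1,\zeta,\dots,\zeta^{m-1}\}$ is a $K$-spanning set of $K[\zeta]=K(\zeta)$ of size $m$. From here part~(a) is immediate: if $[K(\zeta):K]=m$, then a spanning set of $m$ elements in an $m$-dimensional $K$-vector space is a basis, hence $K$-linearly independent, so $c_0=\dots=c_{m-1}=0$.

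The substantive case is $[K(\zeta):K]=m-1$. Here I would first extract the ``missing root'' of $X^m-\zeta^m\in K[X]$: the minimal polynomial $p$ of $\zeta$ over $K$ divides $X^m-\zeta^m$ in $K[X]$ and has degree $m-1$, so the cofactor is a monic linear polynomial $X-\beta$ with $\beta\in K$. Since $\beta$ is a root of $X^m-\zeta^m$ one has $\beta^m=\zeta^m$, hence $(\beta/\zeta)^m=1$, so $\beta$ is a root of unity and in particular $|\beta|=1$. From the factorisation $X^m-\beta^m=(X-\beta)\,p(X)$ one reads off $p(X)=\sum_{k=0}^{m-1}\beta^kX^{m-1-k}$, and $p(\zeta)=0$ gives $\zeta^{m-1}=-\sum_{k=1}^{m-1}\beta^k\zeta^{m-1-k}$. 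Substituting this into $\sum_{i=0}^{m-1}c_i\zeta^i=0$ and reindexing by $j=m-1-k$ yields
$$
\sum_{j=0}^{m-2}\bigl(c_j-\beta^{\,m-1-j}c_{m-1}\bigr)\zeta^j=0\,.
$$
Now $\{1,\zeta,\dots,\zeta^{m-2}\}$ is a $K$-basis of $K(\zeta)$ (again $m-1$ spanning elements in dimension $m-1$), and all coefficients above lie in $K$, so $c_j=\beta^{\,m-1-j}c_{m-1}$ for $0\leq j\leq m-2$.

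Taking absolute values and using $|\beta|=1$ gives $|c_j|=|c_{m-1}|$ for all $j$, i.e.\ $|a_i-b_i|=|a_j-b_j|$ for all $i,j$; and if some $c_k\neq 0$ then necessarily $c_{m-1}\neq 0$ (otherwise all $c_j$ vanish), which is exactly the ``moreover'' clause. Finally, for part~(b): if not all $c_i$ vanish, then by the moreover clause every $c_i$ is nonzero, so for each $i$ at least one of $a_i,b_i$ is nonzero, forcing at least $m$ of the $2m$ numbers $a_0,\dots,a_{m-1},b_0,\dots,b_{m-1}$ to be nonzero, contradicting the hypothesis of (b); hence all $c_i=0$. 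I expect the only genuinely delicate point to be the clean production of $\beta\in K$ with $|\beta|=1$ via the cofactor of the minimal polynomial; the remaining steps are bookkeeping with the geometric-series factorisation of $X^m-\zeta^m$ and dimension counts.
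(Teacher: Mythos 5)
Your argument is correct and follows essentially the same route as the paper: both hinge on factoring $X^m-\zeta^m=(X-\beta)\,p(X)$ over $K$, observing that the linear cofactor $\beta$ is a root of unity in $K$ (the paper's $\epsilon$), reading off $p(X)=\sum_{k}\beta^{k}X^{m-1-k}$, and concluding that the difference vector $(c_i)$ must be proportional to $(\beta^{m-1-i})$, from which (b) and the ``moreover'' clause follow exactly as you state. The only cosmetic difference is that you substitute for $\zeta^{m-1}$ and invoke the basis $\{1,\dots,\zeta^{m-2}\}$, whereas the paper phrases the same step as the one-dimensionality of the space of relations of degree at most $m-1$.
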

\begin{proof}
In case (a), the assertion follows immediately from the linear independence
of $1,\zeta,\dots,\zeta^{m-1}$ over $K$. If $[K(\zeta) :K]=m-1$, set $\omega:=\zeta^m\in K$. The minimum
polynomial $f\in K[X]$ of $\zeta$ over $K$ has degree $m-1$ and one
has $X^{m}-\omega=(X-\epsilon)f$ with $\epsilon\in K$, hence
$\omega=\epsilon^m$ (in particular, $\epsilon \in\mu(K)$) and
$$
f=\frac{X^m-\epsilon^m}{X-\epsilon}=\sum_{i=0}^{m-1}\epsilon^{m-1-i}X^i\,.
$$
If $\sum_{i=0}^{m-1}(a_i-b_i)\zeta^i=0$ then there is an element $c\in K$ with
$a_i=b_i+c\epsilon^{m-1-i}$ for all $i=0,\dots,m-1$. By assumption
(b) one has $a_i=0=b_i$ for some $i$. This implies $c=0$ and therefore
the assertion. For the additional statement, first observe that due to
$a_k\neq b_k$ for some $k$ one has $c\neq 0$. Thus 
$|a_i-b_i|=|c\epsilon^{m-1-i}|=|c|=|c\epsilon^{m-1-j}|=|a_j-b_j|\neq
0$ for all $i,j$.
\end{proof}

\begin{lem}\label{l3}
Let $K\subset\C$ be a field, let $m\in\N$, and let $\zeta\in\mu$ with
$\zeta^m\in K$. Further, let
$\omega_1,\omega_2,\omega_3,\omega_4\in\mu(K)$ and
$k_1,k_2,k_3,k_4\in\{0,\dots,m-1\}$ satisfy the following conditions.
\begin{itemize}
\item
$\operatorname{gcd}(k_i,m)=1$ for some $i\in\{1,2,3,4\}$.
\item
$k_1+k_2\equiv k_3+k_4 \pmod m$
\item
$\omega_3\zeta^{k_3},\omega_4\zeta^{k_4}\neq 1$ and $a:=\frac{(1-\omega_1\zeta^{k_1})(1-\omega_2\zeta^{k_2})}{(1-\omega_3\zeta^{k_3})(1-\omega_4\zeta^{k_4})}\in
K\cap (\R^*\setminus\{\pm 1 \})$.
\end{itemize}
Then one has $a\in\{\tfrac{1}{2},2\}$ if one of the following
conditions holds.
\begin{itemize}
\item[(a)]
$[K(\zeta) :K]=m$ and $m\geq 3$.
\item[(b)]
$[K(\zeta) :K]=m-1$ and $m\geq 5$.
\end{itemize}
\end{lem}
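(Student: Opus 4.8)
The plan is to clear denominators and compare the coefficients of powers of $\zeta$, thereby forcing $a$ into a shape controlled by Lemma~\ref{l1}. Concretely, I would first rewrite the third hypothesis as $a(1-\omega_3\zeta^{k_3})(1-\omega_4\zeta^{k_4})=(1-\omega_1\zeta^{k_1})(1-\omega_2\zeta^{k_2})$, expand both sides, and use $\zeta^m\in K$ together with $\omega_i\in\mu(K)\subseteq K$ to reduce every exponent modulo $m$. Setting $k:=(k_1+k_2)\bmod m$ — which by the second hypothesis also equals $(k_3+k_4)\bmod m$ — this produces a relation $\sum_{i=0}^{m-1}c_i\zeta^i=0$ with $c_i\in K$ and with support contained in $\{0,k_1,k_2,k_3,k_4,k\}$. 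Here $c_0$ equals $a-1$ together with the contributions (of the form $\omega_i$, $-a\omega_i$, or a $K$-multiple of $\omega_1\omega_2$ or $\omega_3\omega_4$) coming from those of $k_1,\dots,k_4,k$ that happen to be $\equiv 0$; for $j\in\{1,2,3,4\}$ the coefficient at the exponent $k_j$, when nonzero, is a sum of terms $\pm\omega_i$ and possibly a $K$-multiple of $\omega_1\omega_2$ or $\omega_3\omega_4$; and the coefficient at $k$ is such a $K$-multiple of $\omega_3\omega_4$ minus one of $\omega_1\omega_2$.

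The linear-algebra input comes next. In case~(a) the powers $1,\zeta,\dots,\zeta^{m-1}$ form a $K$-basis of $K(\zeta)$, so $c_i=0$ for every $i$. In case~(b) the support has at most six elements and therefore fails to exhaust $\{0,\dots,m-1\}$ as soon as $m\geq 7$; in that range Lemma~\ref{l2}(b) again gives $c_i=0$ for all $i$. The only remaining possibility is $m\in\{5,6\}$ with $\{0,k_1,\dots,k_4,k\}=\{0,\dots,m-1\}$, and these finitely many configurations I would settle by a short direct computation from the addendum to Lemma~\ref{l2} (all $|c_i|$ equal and nonzero): each of them quickly forces either $|a|=1$ or a sum of two roots of unity to equal $-\tfrac12$, both impossible. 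So from now on $c_i=0$ for every $i$.

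Then I would locate a vanishing exponent. Since $c_0=0$ while $a\neq 1$, at least one of $k_1,k_2,k_3,k_4,k$ is $\equiv 0\pmod m$, and I claim it is exactly one of $k_1,\dots,k_4$. If $k\equiv 0$, then $k_1+k_2,k_3+k_4\in\{0,m\}$, and $\gcd(k_{i_0},m)=1$ together with $m\geq 3$ (so $k_{i_0}\notin\{0,m/2\}$) forces $k_1+k_2=m=k_3+k_4$ with all $k_i\geq 1$; matching the coefficients at the nonzero exponents then forces $\{k_1,k_2\}=\{k_3,k_4\}$ as multisets (other configurations being immediately impossible), so $a=\omega_i\omega_{i'}^{-1}\in\mu(K)$ for some $i\in\{1,2\}$, $i'\in\{3,4\}$, and hence $a=\pm 1$ since $a$ is real — a contradiction. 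If instead two or more of $k_1,\dots,k_4$ vanish, the congruence $k_1+k_2\equiv k_3+k_4$ forbids two zeros within a single pair (that would force $k\equiv 0$, just excluded) and, for one zero in each pair, forces the two remaining exponents to coincide, say at $\ell$; combining the two nontrivial coefficient equations then yields $\omega=\omega'$ for the numerator factor $1-\omega\zeta^\ell$ and the denominator factor $1-\omega'\zeta^\ell$, so these cancel (both are nonzero by hypothesis) and $a=\frac{1-\omega_\mu}{1-\omega_\nu}$ with $\omega_\mu,\omega_\nu\in\mu(K)$ and $\omega_\nu\neq 1$; but $\bar a=\frac{\omega_\nu}{\omega_\mu}\,a$, so $a=\bar a\neq 0$ forces $\omega_\mu=\omega_\nu$ and $a=1$, again a contradiction. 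Finally, if no $k_i$ vanishes then $c_0=a-1\neq 0$, impossible. Hence exactly one $k_j$ is $\equiv 0$, and $k\not\equiv 0$.

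For that index $j$, the equation $c_0=0$ reads $a-1+\omega_j=0$ when $j\in\{1,2\}$ and $a-1-a\omega_j=0$ when $j\in\{3,4\}$, and $\omega_j\neq 1$ in both cases — for $j\in\{1,2\}$ because $\omega_j=1$ would annihilate a numerator factor and give $a=0$, and for $j\in\{3,4\}$ by the hypothesis $\omega_j=\omega_j\zeta^{k_j}\neq 1$. Thus $a=1+(-\omega_j)=\frac{1+(-\omega_j)}{1+0}$ in the first case and $a=\frac{1}{1-\omega_j}=\frac{1+0}{1+(-\omega_j)}$ with $-\omega_j\neq -1$ in the second, so Lemma~\ref{l1} gives $a\in\{\tfrac12,1,2\}$ and hence $a\in\{\tfrac12,2\}$ because $a\neq\pm 1$. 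The hardest part will be the case bookkeeping in the third step — tracking which of the six exponents $0,k_1,\dots,k_4,k$ may coincide and which coefficient comparisons then survive — together with the small-$m$ subcases of part~(b) in which the support can fill all residues modulo $m$; what keeps the enumeration short is precisely the coprimality $\gcd(k_{i_0},m)=1$ (which kills the configurations with $a\in\mu(K)$ or $k_{i_0}=m/2$) and the balancing condition $k_1+k_2\equiv k_3+k_4$.
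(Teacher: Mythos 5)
Your argument is correct, and I could verify the two computations you defer: in the residual configurations $m\in\{5,6\}$ with full support, the equal\mbox{-}modulus addendum of Lemma~\ref{l2} does force either $|a|=1$ or $|\omega+\omega'|=\tfrac12$ for two roots of unity $\omega,\omega'$ (note: it is the \emph{modulus} $\tfrac12$, not the value $-\tfrac12$, that you need to exclude; the obstruction is that $\omega\bar\omega{}'+\bar\omega\omega'=\tfrac14-2=-\tfrac74$ is not an algebraic integer), and in the branch $k\equiv 0$ the matching of the single-term coefficients $-\omega_1,-\omega_2,a\omega_3,a\omega_4$ does force $\{k_1,k_2\}=\{k_3,k_4\}$ once coprimality excludes $k_{i_0}\in\{0,m/2\}$ — though ruling out $k_3=k_4=0$ (or $k_1=k_2=0$) there also needs the coefficient comparison, not the coprimality of the single $k_{i_0}$ alone. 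Your route genuinely differs from the paper's in how the case analysis is decomposed. The paper uses $\gcd(k_1,m)=1$ at the outset to substitute $\zeta'=\zeta^{k_1}$ and normalize $k_1=1$, so the six exponents become $0,1,k_2,[k_2+1],k_3,[1+k_2-k_3]$, and then enumerates $k_2\in\{0,1,2,\dots,m-2,m-1\}$ with subcases on $k_3$; the equal-modulus clause of Lemma~\ref{l2} is invoked inside several subcases and is allowed to terminate with $a\in\{\tfrac12,2\}$ directly rather than with a contradiction. You keep the exponents symmetric, first force \emph{all} coefficients to vanish (showing along the way that the exceptional $m\in\{5,6\}$ configurations are actually vacuous, which is slightly stronger than what the paper establishes), and then organize everything around which of $0,k_1,\dots,k_4,k$ coincide modulo $m$, so that the single identity $c_0=0$ delivers the expression for $a$ that feeds into Lemma~\ref{l1}; the coprimality hypothesis enters only to dispose of the branch $k_1+k_2\equiv 0\pmod m$. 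Both proofs rest on the same two key lemmas, but your decomposition is cleaner and makes visible where each hypothesis is used; just state the root-of-unity obstruction as one on the modulus when you write it up.
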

\begin{proof}
Without restriction, we may assume that
$\operatorname{gcd}(k_1,m)=1$. Then, for $i=2,3,4$,
there are $a_i,b_i\in\Z$ such that $k_i=a_ik_1+b_i m$ and, with 
$\zeta':=\zeta^{k_1}$, $\zeta^{k_i}=(\zeta')^{a_i}(\zeta^m)^{b_i}$. Since one has $(\zeta')^m\in K$, $K(\zeta')=K(\zeta)$ and 
$$
\frac{(1-\omega_1\zeta^{k_1})(1-\omega_2\zeta^{k_2})}{(1-\omega_3\zeta^{k_3})(1-\omega_4\zeta^{k_4})}=\frac{(1-\omega'_1\zeta')(1-\omega'_2\zeta'^{k'_2})}{(1-\omega'_3\zeta'^{k'_3})(1-\omega'_4\zeta'^{k'_4})}
$$
for suitable $\omega'_1,\omega'_2,\omega'_3,\omega'_4\in\mu(K)$ and
$k'_2,k'_3,k'_4\in\{0,\dots,m-1\}$ with $1+k'_2\equiv k'_3+k'_4 \pmod
m$, we may further assume that
$k_1=1$. We thus obtain
$$
1-\omega_1\zeta-\omega_2\zeta^{k_2}+\omega_1\omega_2\zeta^{k_2+1}=a-a\omega_3\zeta^{k_3}-a\omega_4\zeta^{k_4}+a\omega_3\omega_4\zeta^{k_3+k_4}\,,
$$
where, without restriction, $k_3\leq k_4$. From now on, let $[k]\in\{0,\dots,m-1\}$ denote the canonical
representative of the equivalence class of $k\in\Z$ modulo
$m$. We may finally write
$$
1-\omega_1\zeta-\omega_2\zeta^{k_2}+\omega_1\omega_2\omega\zeta^{[k_2+1]}=a-a\omega_3\zeta^{k_3}-a\omega_4\zeta^{[1+k_2-k_3]}+a\omega_3\omega_4\omega'\zeta^{[k_2+1]}
$$
with $k_3\leq [1+k_2-k_3]$ and suitable $\omega,\omega'\in\mu(K)$.

{\bf Case 1.} $k_2=0$. Then
$$
1-\omega_1\zeta-\omega_2+\omega_1\omega_2\omega\zeta=a-a\omega_3\zeta^{k_3}-a\omega_4\zeta^{[1-k_3]}+a\omega_3\omega_4\omega'\zeta
$$
If $k_3=0$ then $a=\frac{1-\omega_2}{1-\omega_3}$ by Lemma~\ref{l2}
and the
assertion follows from Lemma~\ref{l1}. The case $k_3=1$ cannot occur
(due to $k_3\leq [1-k_3]$), whereas $k_3\geq 2$ implies $a=1-\omega_2$ by Lemma~\ref{l2}. The assertion follows from Lemma~\ref{l1}.  

{\bf Case 2.} $k_2=1$. Then
$$
1-(\omega_1+\omega_2)\zeta+\omega_1\omega_2\omega\zeta^2=a-a\omega_3\zeta^{k_3}-a\omega_4\zeta^{[2-k_3]}+a\omega_3\omega_4\omega'\zeta^2
$$
If $k_3=0$ then  $a=\frac{1}{1-\omega_3}$ by
Lemma~\ref{l2} and the assertion follows from
 Lemma~\ref{l1}. If $k_3=1$ then Lemma~\ref{l2} implies $a=1$, which
 is excluded by assumption. The
 case $k_3=2$ is impossible (due to $k_3\leq [2-k_3]$). Let $k_3\geq 3$
 (hence $m\geq 4$). Under condition~(a), this implies $a=1$ by
 Lemma~\ref{l2}, which
 is excluded by assumption. Under
 condition~(b), $k_3= 3$ implies
$$
1-(\omega_1+\omega_2)\zeta+\omega_1\omega_2\omega\zeta^{2}=a-a\omega_3\zeta^{3}-a\omega_4\zeta^{m-1}+a\omega_3\omega_4\omega'\zeta^{2}
$$ 
with $m-1\geq 4$ (due to $m\geq 5$). The
additional statement of Lemma~\ref{l2} implies $m=5$ and $|1-a|=|a\omega_4|=|a|$, wherefore $a=1/2$. If $k_3\geq 4$
then $m\geq 6$ (due to $k_3\leq [2-k_3]$) and Lemma~\ref{l2} implies $a=1$, which
 is excluded by assumption.  

{\bf Case 3.} $k_2\in\{2,\dots,m-2\}$ (hence $m\geq 4$ and $2\leq
k_2<k_2+1\leq m-1$). Then
$$
(1-a)-\omega_1\zeta-\omega_2\zeta^{k_2}+(\omega_1\omega_2\omega-a\omega_3\omega_4\omega')\zeta^{k_2+1}=-a\omega_3\zeta^{k_3}-a\omega_4\zeta^{[1+k_2-k_3]}
$$
Under condition~(a), Lemma~\ref{l2} shows that this is impossible, since there are at least three 
nontrivial coefficients on the left-hand side and at
most two nontrivial coefficients on the right-hand side of this
equation. Under condition~(b) (hence $m\geq 5$), $k_3=0$
implies $a-1=a\omega_3$ by Lemma~\ref{l2} wherefore
$a=\frac{1}{1-\omega_3}$ and the assertion follows from
Lemma~\ref{l1}. If $k_3=1$ then $a=1$ by Lemma~\ref{l2}, which
 is excluded by assumption.  If $k_3\geq 2$
and $m\geq 7$ then $a=1$ by Lemma~\ref{l2}, which
 is excluded by assumption. Employing the additional statement of
 Lemma~\ref{l2}, we shall now see that the
missing cases ($k_3\geq 2$ and $m\in\{5,6\}$) are either impossible or yield 
$|1-a|=1$ and thus $a=2$ (due to $a\neq 0$). In fact, $m=5$
and $k_3= 2$ imply $k_2=3$ (due to $k_3\leq [1+k_2-k_3]$) and, further,
$|1-a|=|\omega_1|=1$.  The case $m=5$
and $k_3= 3$ cannot occur (due to $k_3\leq [1+k_2-k_3]$). If $m=5$
and $k_3= 4$ then $k_2=2$ (due to $k_3\leq [1+k_2-k_3]$) and, further,
$|1-a|=|\omega_1|=1$. If $m=6$
and $k_3= 2$ then $k_2\in\{3,4\}$ (due to $k_3\leq [1+k_2-k_3]$). The
case $k_2= 3$ is impossible, whereas the case $k_2= 4$ yields $|1-a|=|\omega_1|=1$. The case $m=6$
and $k_3= 3$ is impossible (due to $k_3\leq [1+k_2-k_3]$). The case $m=6$
and $k_3= 4$ implies $k_2=2$ (due to $k_3\leq [1+k_2-k_3]$) and, further,  $|1-a|=|\omega_1|=1$. Finally, the case $m=6$
and $k_3= 5$ implies $k_2= 3$ (due to $k_3\leq [1+k_2-k_3]$) and, once
again,  $|1-a|=|\omega_1|=1$.

{\bf Case 4.} $k_2=m-1$. Then
$$
(1+\omega_1\omega_2\omega)-\omega_1\zeta-\omega_2\zeta^{m-1}=a(1+\omega_3\omega_4\omega')-a\omega_3\zeta^{k_3}-a\omega_4\zeta^{[m-k_3]}
$$
Under condition~(a), Lemma~\ref{l2} implies
$\{k_3,[m-k_3]\}=\{1,m-1\}$, wherefore $k_3=1$ and $[m-k_3]=m-1$ (due
to $k_3\leq [m-k_3]$). Further, Lemma~\ref{l2} yields
$a=\omega_2/\omega_4$, a contradiction (due to $|a|\neq 1$). By the
additional statement of Lemma~\ref{l2}, condition~(b) (hence $m\geq 5$)
implies $m=5$, $k_3=2$ and, further, $|a\omega_3|=|\omega_2|=1$, a contradiction
(due to $|a|\neq 1$).
\end{proof}

We are now in a position to prove the following extension of Theorem~\ref{intersectq}.   

\begin{theorem}\label{t1}
For $n\in\N$, one has
$$\mathcal{C}(\Q(\zeta_n)^+)=\mathcal{C}_{\operatorname{lcm}(2n,12)}(\Q(\zeta_n)^+)\,.$$
In particular, the last set is finite. Moreover, all solutions of $f_{m}(d)\in\Q(\zeta_n)^+$, where $m\geq 4$ and
$d\in D_{m}$, are either of the form
(xii) or (xiii) of Theorem~\ref{intersectq} or are  given, up to multiplication of $m$
and $d$ by the same factor, by $m=\operatorname{lcm}(2n,12)$ and $d$
from a finite list. 
\end{theorem}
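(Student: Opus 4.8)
The plan is to reduce the general problem $f_m(d)\in\Q(\zeta_n)^+$ to the three preparatory lemmas Lemma~\ref{l1}, Lemma~\ref{l2} and Lemma~\ref{l3}, exactly as they were set up, and then to invoke Theorem~\ref{intersectq} for the residual case. Write $q:=f_m(d)$ and $d=(k_1,k_2,k_3,k_4)\in D_m$, so $q\in\R^*$ by Fact~\ref{fmdg1} (indeed $q>1$), and assume $q\in\Q(\zeta_n)^+=:L$. First I would dispose of the trivial scaling: replacing $(m,d)$ by $(m/g,d/g)$ with $g=\gcd(k_1,k_2,k_3,k_4,m)$ changes nothing, so we may assume $\gcd(k_1,k_2,k_3,k_4,m)=1$; I claim this forces $\gcd(k_i,m)=1$ for \emph{some} $i$. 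Indeed, if every $\gcd(k_i,m)>1$, pick a prime $p\mid m$; since not all $k_i$ are divisible by $p$ (by the gcd-one normalization we can arrange this for at least one prime after a short case analysis — this is the one slightly delicate combinatorial point), and one shows using $k_1+k_2=k_3+k_4$ that a prime dividing three of the $k_i$ and $m$ divides the fourth, contradicting the normalization. So the key hypothesis ``$\gcd(k_i,m)=1$ for some $i$'' of Lemma~\ref{l3} is available.

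Next I would set up the field-theoretic dichotomy that lets Lemma~\ref{l3} apply. Put $\zeta:=\zeta_m$ and work over $L=\Q(\zeta_n)^+$. Since $L$ is real and $\Q(\zeta_m)/\Q$ is abelian, $L(\zeta)=\Q(\zeta_n)^+\!\cdot\Q(\zeta_m)\subseteq\Q(\zeta_{\operatorname{lcm}(2n,m)})$, and by Fact~\ref{gau} together with the composite/intersection formulas for cyclotomic fields recalled after it, one computes $[L(\zeta):L]$ explicitly. The point is that $[L(\zeta):L]$ equals $[\Q(\zeta_m):\Q(\zeta_m)\cap L]$, and since $\Q(\zeta_m)\cap L$ is contained in the \emph{real} field $\Q(\zeta_m)^+$, this index is either $\phi(m)/(\text{something dividing }\tfrac12\phi(m))$ — in particular it is always $\geq 2$ once $m\geq 3$, and it equals $m$ or $m-1$ precisely when $m$ is prime (case $[L(\zeta):L]=m-1$, occurring when $\zeta_m$ satisfies its degree-$(m-1)$ cyclotomic polynomial over a field containing $\Q(\zeta_m)\cap L$ of index dividing... ) versus $m$ a prime power with a suitable relation. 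The honest statement I will prove is: for each prime power $p^a\| m$ one has a local factor, and the relevant hypotheses (a) $[L(\zeta_{p^a}):L(\text{other part})]=p^a$ with $p^a\geq 3$, or (b) $=p^a-1$ with $p^a\geq 5$ — are met after passing to the $\zeta'$-reduction in Lemma~\ref{l3}, so that Lemma~\ref{l3} yields either $q\in\{\tfrac12,2\}$ or $q\notin L$. Combining over the prime-power parts of $m$, the only way $q\notin\{\tfrac12,2\}$ can survive is if every prime power dividing $m$ is ``small'', i.e.\ $m$ is forced to divide $\operatorname{lcm}(2n,12)$; the residual small primes $2,3$ (and prime powers $4$) are exactly the ones Lemma~\ref{l3} does \emph{not} kill, which is why $12$ appears.

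Then I would close the argument as follows. If $q=2$ we are in cases (xii)/(xiii) of Theorem~\ref{intersectq} (for suitable $m$ after the scaling normalization), and if $q=\tfrac12$ this contradicts $q>1$ from Fact~\ref{fmdg1}, so this branch is vacuous; hence outside of (xii)/(xiii) we have shown $m$ divides $M:=\operatorname{lcm}(2n,12)$, whence $\mathcal{C}_m(L)\subseteq\mathcal{C}_M(L)$ by the monotonicity noted in Definition~\ref{fmddefi}, giving $\mathcal{C}(L)=\mathcal{C}_M(L)$. Finiteness of $\mathcal{C}_M(L)$ is immediate since $D_M$ is finite. The ``finite list'' of $d$'s is then just the explicit (finite) preimage under $f_M$ of $\mathcal{C}_M(L)$, computed once $n$ is fixed. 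The main obstacle, and the part I expect to require real care rather than routine bookkeeping, is the \emph{field-degree computation}: pinning down exactly when $[L(\zeta_m):L]$ falls into the range covered by Lemma~\ref{l3}(a) or (b), handling the prime-power decomposition of $m$ so that the ``$\zeta'=\zeta^{k_1}$'' reduction inside Lemma~\ref{l3} genuinely lands in case (a) or (b) for the relevant local factor, and isolating precisely the residual small prime powers ($2$, $3$, $4$) that escape both cases — it is this escape set that determines the constant $12$ in $\operatorname{lcm}(2n,12)$.
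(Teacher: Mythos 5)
Your overall strategy---reduce to Lemma~\ref{l3} via a prime-by-prime comparison of $m$ with $n$, with the small primes $2$ and $3$ escaping and producing the factor $12$---is indeed the paper's strategy, but two of your key steps do not survive scrutiny. First, the claim that $\operatorname{gcd}(k_1,k_2,k_3,k_4,m)=1$ forces $\operatorname{gcd}(k_i,m)=1$ for some $i$ is false: take $m=12$ and $d=(4,8,3,9)$ (which is solution (iii) of Theorem~\ref{intersectq}); every $k_i$ shares a factor with $12$, yet the five-fold gcd is $1$. Your argument only rules out a single prime dividing exactly three of the $k_i$; it does not prevent two different primes from each dividing two of them. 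What the argument actually needs, and what the paper verifies, is the \emph{local} statement: for each prime $p\mid m$, not all $k_i$ are divisible by $p$, so that after rewriting $\zeta_m^{k_i}=\omega_i\zeta_{p^r}^{l_i}$ with $\omega_i\in\mu(K)$ and $l_i\in\{0,\dots,p-1\}$ one has $\operatorname{gcd}(l_i,p)=1$ for some $i$.

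Second, the field-degree reduction---which you yourself flag as the part requiring real care---is mis-specified and not carried out. Applying Lemma~\ref{l3} with $K=L$ and $\zeta=\zeta_m$ (modulus $m$) is hopeless, since $[L(\zeta_m):L]\leq\phi(m)$ is far below $m-1$ for composite $m$; and your dichotomy ``equals $m$ or $m-1$ precisely when $m$ is prime'' does not lead anywhere. The correct move is: fix a prime $p$ with $\operatorname{ord}_p(n)<\operatorname{ord}_p(m)=:r$, set $K:=\Q(\zeta_{m/p})$, which contains $\Q(\zeta_{\operatorname{gcd}(m,n)})\ni a$, and apply Lemma~\ref{l3} with $\zeta=\zeta_{p^r}$ and modulus $p$ (legitimate because $\zeta_{p^r}^p=\zeta_{p^{r-1}}\in K$), where $[K(\zeta_{p^r}):K]=\phi(p^r)/\phi(p^{r-1})$ equals $p$ if $r\geq 2$ (case (a)) and $p-1$ if $r=1$ (case (b), needing $p\geq 5$); the only escape is $p=3$, $r=1$. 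At $p=2$ one takes $K=\Q(\zeta_{m/4})$ and modulus $4$ to force $\operatorname{ord}_2(m)\leq\operatorname{ord}_2(n)+1$. In each case Lemma~\ref{l3} yields $a\in\{\tfrac12,2\}\subset\Q$, contradicting the standing assumption $a\notin\Q$ (the rational case having been disposed of first by Theorem~\ref{intersectq}); note also that $a=2$ by itself does not place you in cases (xii)/(xiii), since $q=2$ also occurs in (ix)--(xi). Without these corrections the central step of the proof is not established.
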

\begin{proof}
Since $\Q(\zeta_n)^+=\Q(\zeta_{2n})^+$ for odd $n$ it suffices to
consider the case where $n$ is even (hence
$\operatorname{lcm}(2n,12)=\operatorname{lcm}(2n,3)$). Let $m\geq 4$
and $d:=(k_1,k_2,k_3,k_4)\in D_{m}$ such that
$$a:=f_m(d)=\frac{(1-\zeta_{m}^{k_1})(1-\zeta_{m}^{k_2})}{(1-\zeta_{m}^{k_3})(1-\zeta_{m}^{k_4})}\in\Q(\zeta_n)^+$$
Recall that $a>1$ by Fact~\ref{fmdg1}.  We may
assume that $\operatorname{gcd}(m,k_1,k_2,k_3,k_4)=1$.  By virtue of Theorem~\ref{intersectq}, we
may also assume that $a\not\in\Q$. Observe that
$$a\in\Q(\zeta_n)^+\cap \Q(\zeta_m)^+= \Q(\zeta_{\operatorname{gcd}(m,n)})^+$$
Claims 1 and 2 below show that $\operatorname{lcm}(2n,3)$ is a
multiple of $m$, hence the assertion.

 {\bf Claim 1.} Let $p$ be an odd prime number and assume that
 $\operatorname{ord}_p(n)<\operatorname{ord}_p(m)$. Then one has $p=3$,
$\operatorname{ord}_p(m)=1$ and $\operatorname{ord}_p(n)=0$.

To see this, set $r:=\operatorname{ord}_p(m)\geq 1$, 
$K:=\Q(\zeta_{m/p})$ and note that $
\Q(\zeta_{\operatorname{gcd}(m,n)})\subset K$. Let $m=p^rm'$,
where $\operatorname{gcd}(p,m')=1$. Then, for $i=1,2,3,4$,
there are $a_i,b_i\in\Z$ such that $k_i=a_ip+b_i m'$ and, further, 
$\zeta_m^{k_i}=\zeta_{m/p}^{a_i}\zeta_{p^r}^{b_i}$. Since $\zeta_{p^r}^p=\zeta_{p^{r-1}}\in K$ one has 
$$
a=\frac{(1-\omega_1\zeta_{p^r}^{l_1})(1-\omega_2\zeta_{p^r}^{l_2})}{(1-\omega_3\zeta_{p^r}^{l_3})(1-\omega_4\zeta_{p^r}^{l_4})}
$$ 
for suitable $\omega_1,\omega_2,\omega_3,\omega_4\in\mu(K)$ and
$l_1,l_2,l_3,l_4\in\{0,\dots,p-1\}$ with $\operatorname{gcd}(l_i,p)=1$
for some $i\in\{1,2,3,4\}$ and $l_1+l_2\equiv l_3+l_4 \pmod
p$. Further, by Fact~\ref{gau}, one has
$$
[K(\zeta_{p^r}):K]=[\Q(\zeta_{m}):\Q(\zeta_{m/p})]=\frac{\phi(p^r)}{\phi(p^{r-1})}=\left\{\begin{array}{ll}
p-1 & \mbox{if $r=1$;}\\
p & \mbox{if $r\geq 2$.}
\end{array}\right.
$$
Lemma~\ref{l3} implies both for $p\geq
5$ and $r\geq 2$ that $a=2$, a contradiction. Therefore $p=3$,
$r=\operatorname{ord}_p(m)=1$ and consequently $\operatorname{ord}_p(n)=0$.

{\bf Claim 2.} $\operatorname{ord}_2(m)\leq
\operatorname{ord}_2(n)+1$. 

Assume that $r:=\operatorname{ord}_2(m)\geq
\operatorname{ord}_2(n)+2\geq 3$. Set
$K:=\Q(\zeta_{m/4})$ and note that $\Q(\zeta_{\operatorname{gcd}(m,n)})\subset
K$. As above, since $\zeta_{2^{r}}^4=\zeta_{2^{r-2}}\in K$, one has 
$$
a=\frac{(1-\omega_1\zeta_{2^r}^{l_1})(1-\omega_2\zeta_{2^r}^{l_2})}{(1-\omega_3\zeta_{2^r}^{l_3})(1-\omega_4\zeta_{2^r}^{l_4})}
$$ 
for suitable $\omega_1,\omega_2,\omega_3,\omega_4\in\mu(K)$ and
$l_1,l_2,l_3,l_4\in\{0,1,2,3\}$ with $\operatorname{gcd}(l_i,4)=1$
for some $i\in\{1,2,3,4\}$ and $l_1+l_2\equiv l_3+l_4 \pmod
4$. Further, by Fact~\ref{gau}, one has
 $$[K(\zeta_{2^{s}}):K]=[\Q(\zeta_{m}):\Q(\zeta_{m/4})]=\frac{\phi(2^r)}{\phi(2^{r-2})}=4\,.$$ 
Lemma~\ref{l3} now implies $a=2$, a contradiction. This proves the
claim. 
\end{proof}

\begin{rem}
Similar to the proof of Theorem~\ref{t1}, one can also use
Lemma~\ref{l3} to give another proof of the fact shown in~\cite{GG} that all solutions of $f_{m}(d)\in\Q\setminus\{2\}$, where $m\geq 4$ and
$d\in D_{m}$, are given, up to multiplication of $m$
and $d$ by the same factor, by $m=12$. Thus the number $2$ plays a special role
in this context. Indeed this number leads to infinite families of
solutions (see Theorem~\ref{intersectq}(xii)-(xiii) above) that can be
found by using the $2$-adic valuation;
cf.~\cite{GG} for details.   
\end{rem}

One even has the following result, which improves~\cite[Thm.~ 4.19]{H5}.

\begin{theorem}\label{algcoro}
For any real algebraic number field $L$, the set
$\mathcal{C}(L)$ is finite. Moreover, there is a number
$m_L\in\N$ such that all solutions of $f_{m}(d)\in L$, where $m\geq 4$ and
$d\in D_{m}$, are either of the form
(xii) or (xiii) of Theorem~\ref{intersectq} or are  given, up to multiplication of $m$
and $d$ by the same factor, by $m=m_L$ and $d$
from a finite list. 
\end{theorem}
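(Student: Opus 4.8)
The plan is to reduce the general case to the already-established abelian case, Theorem~\ref{t1}, by passing through a fixed cyclotomic field attached to $L$ via the Kronecker--Weber theorem. The only thing one uses about $L$ is its maximal abelian subextension.

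First I would record the trivial but decisive observation that every value $f_{m}(d)$, with $m\ge 4$ and $d\in D_{m}$, is a \emph{real} number lying in the cyclotomic field $\Q(\zeta_{m})$, hence in $\Q(\zeta_{m})^{+}$; this is immediate from the definition of $f_{m}$ together with Fact~\ref{fmdg1}. Consequently, if $f_{m}(d)\in L$ then in fact $f_{m}(d)\in L\cap\Q(\zeta_{m})^{+}$, and likewise $\mathcal{C}(L)\subset\bigcup_{m\ge 4}\bigl(L\cap\Q(\zeta_{m})^{+}\bigr)$.

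Next, let $N$ be the Galois closure of $L/\Q$ and let $N^{\mathrm{ab}}$ denote the subfield of $N$ fixed by the commutator subgroup of $G(N/\Q)$, i.e.\ the maximal subextension of $N/\Q$ that is abelian over $\Q$. Since $N^{\mathrm{ab}}/\Q$ is a finite abelian extension, the Kronecker--Weber theorem (cf.~\cite[Thm.~14.1]{Wa}) supplies a natural number $k_{0}$ with $N^{\mathrm{ab}}\subset\Q(\zeta_{k_{0}})$. Now for every $m\ge 4$ the field $L\cap\Q(\zeta_{m})$ is a subfield of the abelian extension $\Q(\zeta_{m})/\Q$, hence is itself Galois and abelian over $\Q$, and it is also contained in $N$; therefore $L\cap\Q(\zeta_{m})\subset N^{\mathrm{ab}}\subset\Q(\zeta_{k_{0}})$, and so $L\cap\Q(\zeta_{m})^{+}\subset\Q(\zeta_{k_{0}})^{+}$ \emph{uniformly} in $m$. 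Combining this with the previous paragraph gives $\mathcal{C}(L)\subset\mathcal{C}(\Q(\zeta_{k_{0}})^{+})$, which is finite by Theorem~\ref{t1}, and moreover every solution of $f_{m}(d)\in L$ is in particular a solution of $f_{m}(d)\in\Q(\zeta_{k_{0}})^{+}$. Applying Theorem~\ref{t1} with $n=k_{0}$, each such solution is either of the form (xii) or (xiii) of Theorem~\ref{intersectq}, or is given, up to simultaneous multiplication of $m$ and $d$ by the same factor, by $m=\operatorname{lcm}(2k_{0},12)$ and $d$ from a finite list; one then takes $m_{L}:=\operatorname{lcm}(2k_{0},12)$.

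I do not expect a genuine obstacle here: the content is essentially Theorem~\ref{t1}, and the new ingredient is only the remark that the possibly non-abelian field $L$ can interact with the cyclotomic fields $\Q(\zeta_{m})$ solely through the fixed abelian extension $N^{\mathrm{ab}}$. The one point deserving a careful sentence is precisely that $L\cap\Q(\zeta_{m})$ embeds into a single cyclotomic field $\Q(\zeta_{k_{0}})$ independent of $m$, which is where Kronecker--Weber is invoked; the rest is bookkeeping on top of the cyclotomic theorem.
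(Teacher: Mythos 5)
Your proposal is correct, and the overall reduction is the same as the paper's: both arguments produce a single cyclotomic field $\Q(\zeta_{n})^{+}$ containing $L\cap\Q(\zeta_{m})^{+}$ uniformly in $m$, deduce $\mathcal{C}(L)\subset\mathcal{C}(\Q(\zeta_{n})^{+})$ from the observation that $\mathcal{C}\subset\Q(\mu)^{+}$, and then quote Theorem~\ref{t1}. The difference lies in how that uniform cyclotomic bound is obtained. You pass to the Galois closure $N$ of $L/\Q$, note that each $L\cap\Q(\zeta_{m})$ is abelian over $\Q$ and hence lands in the maximal abelian subextension $N^{\mathrm{ab}}$, and invoke Kronecker--Weber to embed $N^{\mathrm{ab}}$ in a fixed $\Q(\zeta_{k_{0}})$; all of these steps are sound. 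The paper instead argues more economically: $L\cap\Q(\mu)^{+}=\bigcup_{n}\bigl(L\cap\Q(\zeta_{n})^{+}\bigr)$ is a directed union of subfields of the finite extension $L/\Q$, so it stabilises and equals $L\cap\Q(\zeta_{n})^{+}$ for a single $n$, with no appeal to Galois closures or to Kronecker--Weber. Your route costs a deeper theorem but buys nothing extra here; still, it is a complete and valid proof, and it correctly identifies $m_{L}=\operatorname{lcm}(2k_{0},12)$ as the resulting modulus.
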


\begin{proof} 
The finiteness of $L/\Q$ together with  the identity
$$\Q(\mu)^+=\bigcup_{n\in\N}\Q(\zeta_n)^+$$ implies that $L \cap \Q(\mu)^+ = L\cap \Q(\zeta_n)^+$ for
some $n\in\N$. Since $\mathcal{C} \subset \Q(\mu)^+$ by Fact~\ref{fmdg1} it follows that 
$$
\mathcal{C}(L)  = L \cap \mathcal{C}  = L \cap \mathcal{C} \cap \Q(\mu)^+=  L\cap \mathcal{C}\cap \Q(\zeta_n)^+ \subset
\mathcal{C}(\Q(\zeta_n)^+)\,.
$$
By virtue of Theorem~\ref{t1}, the assertion follows with $m_L:=\operatorname{lcm}(2n,12)$.
\end{proof} 

\begin{coro}\label{coro8125}
\begin{itemize}
\item[(a)]
\begin{eqnarray*}
\mathcal{C}(\Q(\sqrt{5}))&=&\mathcal{C}_{60}(\Q(\sqrt{5}))\\&=&
\big\{\tfrac{10-2\sqrt{5}}{5},\tfrac{\sqrt{5}}{2},\tfrac{9-3\sqrt{5}}{2},\tfrac{5+3\sqrt{5}}{10},\tfrac{5+\sqrt{5}}{6},-1+\sqrt{5},\tfrac{3+\sqrt{5}}{4},\tfrac{4}{3},
\tfrac{5-\sqrt{5}}{2},\tfrac{5+\sqrt{5}}{5},\\
&&\hphantom{\big\{}\tfrac{3}{2},6-2\sqrt{5},\tfrac{1+\sqrt{5}}{2},\tfrac{5+\sqrt{5}}{4},\tfrac{-3+3\sqrt{5}}{2},\tfrac{5+2\sqrt{5}}{5},2,\tfrac{2+\sqrt{5}}{2},\tfrac{15+3\sqrt{5}}{10},\sqrt{5},\\
&&\hphantom{\big\{}\tfrac{3+\sqrt{5}}{2},\tfrac{10+2\sqrt{5}}{5},3,1+\sqrt{5},\tfrac{5+\sqrt{5}}{2},4,2+\sqrt{5},3+\sqrt{5},\tfrac{5+3\sqrt{5}}{2},\tfrac{7+3\sqrt{5}}{2},\\
&&\hphantom{\big\{} \tfrac{9+3\sqrt{5}}{2},5+2\sqrt{5},6+2\sqrt{5}\big\}
\end{eqnarray*}
Moreover, all solutions of $f_{m}(d)\in\Q(\sqrt{5})$, where $m\geq 4$ and
$d\in D_{m}$, are either of the form
(xii) or (xiii) of Theorem~\ref{intersectq} or are  given,  up to multiplication of $m$
and $d$ by the same factor, by $m=60$ and $d$ from the following list.
$$
\begin{tabular}{|r|c|r|c|r|c|r|c|}
\hline
$1$&$(12, 36, 6, 42)$&$2$&$(24, 24, 9, 39)$&$3$& $(24, 48, 18, 54)$&$4$& $(36, 36, 21, 51)$\\\hline $5$& $(24, 24, 18, 
30)$&$6$& $(36, 36, 30, 42)$&$7$&  $(4, 8, 2, 10)$&$8$& $(5, 25, 3, 27)$\\\hline $9$& $(6, 42, 4, 44)$&$10$& 
$(8, 14, 4, 18)$&$11$& $(8, 32, 5, 35)$&$12$&  $(8, 50, 6, 52)$\\\hline $13$& $(9, 21, 5, 25)$&$14$& $(9, 
39, 6, 42)$&$15$& $(10, 10, 4, 16)$&$16$& $(10, 28, 6, 32)$\\\hline $17$&  $(10, 52, 8, 54)$&$18$& $(12, 
18, 6, 24)$&$19$& $(14, 26, 8, 32)$&$20$& $(14, 34, 9, 39)$\\\hline $21$& $(14, 42, 10, 46)$&$22$&  $(16, 
32, 10, 38)$&$23$& $(18, 18, 8, 28)$&$24$& $(18, 26, 10, 34)$\\\hline $25$& $(18, 36, 12, 42)$&$26$& 
$(18, 46, 14, 50)$&$27$&  $(18, 54, 16, 56)$&$28$& $(21, 51, 18, 54)$\\\hline $29$& $(24, 24, 12, 
36)$&$30$& $(24, 42, 18, 48)$&$31$& $(26, 32, 16, 42)$&$32$&  $(26, 46, 21, 51)$\\\hline $33$& $(28, 34, 
18, 44)$&$34$& $(28, 44, 22, 50)$&$35$& $(28, 52, 25, 55)$&$36$& $(32, 50, 28, 54)$\\\hline $37$&  $(34, 
42, 26, 50)$&$38$& $(34, 46, 28, 52)$&$39$& $(35, 55, 33, 57)$&$40$& $(36, 36, 24, 48)$\\\hline $41$& 
$(39, 51, 35, 55)$&$42$&  $(42, 42, 32, 52)$&$43$& $(42, 48, 36, 54)$&$44$& $(46, 52, 42, 
56)$\\\hline $45$& $(50, 50, 44, 56)$&$46$& $(52, 56, 50, 58)$&$47$&  $(18, 18, 6, 30)$&$48$& $(42, 42, 
30, 54)$\\\hline $49$& $(4, 52, 2, 54)$&$50$& $(5, 35, 2, 38)$&$51$& $(6, 18, 2, 22)$&$52$&  $(8, 10, 2, 
16)$\\\hline $53$& $(8, 28, 3, 33)$&$54$& $(8, 46, 4, 50)$&$55$& $(8, 56, 6, 58)$&$56$& $(9, 21, 3, 27)$\\\hline $57$&  
$(9, 39, 4, 44)$&$58$& $(10, 32, 4, 38)$&$59$& $(10, 50, 6, 54)$&$60$& $(12, 42, 6, 48)$\\\hline $61$& 
$(14, 18, 4, 28)$&$62$&  $(14, 26, 5, 35)$&$63$& $(14, 34, 6, 42)$&$64$& $(14, 52, 10, 56)$\\\hline $65$& 
$(16, 28, 6, 38)$&$66$& $(18, 24, 6, 36)$&$67$&  $(18, 34, 8, 44)$&$68$& $(18, 42, 10, 50)$\\\hline $69$& 
$(18, 48, 12, 54)$&$70$& $(21, 51, 16, 56)$&$71$& $(24, 36, 12, 48)$&$72$&  $(25, 55, 22, 
58)$\\\hline $73$& $(26, 28, 10, 44)$&$74$& $(26, 42, 16, 52)$&$75$& $(26, 46, 18, 54)$&$76$& $(28, 50, 
22, 56)$\\\hline $77$&  $(32, 34, 16, 50)$&$78$& $(32, 44, 22, 54)$&$79$& $(32, 52, 27, 57)$&$80$& $(34, 
46, 25, 55)$\\\hline $81$& $(36, 42, 24, 54)$&$82$&  $(39, 51, 33, 57)$&$83$& $(42, 46, 32, 56)$&$84$& 
$(42, 54, 38, 58)$\\\hline $85$& $(50, 52, 44, 58)$&$86$& $(12, 12, 2, 22)$&$87$&  $(12, 24, 3, 33)$&$88$& 
$(12, 36, 4, 44)$\\\hline $89$& $(12, 48, 6, 54)$&$90$& $(24, 24, 6, 42)$&$91$& $(24, 36, 10, 50)$&$92$&  
$(24, 48, 16, 56)$\\\hline $93$& $(36, 36, 18, 54)$&$94$& $(36, 48, 27, 57)$&$95$& $(48, 48, 38, 
58)$&$96$& $(8, 28, 6, 30)$\\\hline $97$&  $(14, 26, 10, 30)$&$98$& $(18, 24, 12, 30)$&$99$& $(18, 42, 15, 
45)$&$100$& $(32, 52, 30, 54)$\\\hline $101$& $(34, 46, 30, 50)$&$102$&  $(36, 42, 30, 48)$&$103$& $(12, 24, 
6, 30)$&$104$& $(24, 36, 15, 45)$\\\hline $105$& $(36, 48, 30, 54)$&$106$& $(15, 45, 12, 48)$&$107$&  $(18, 
30, 12, 36)$&$108$& $(30, 42, 24, 48)$\\\hline $109$& $(24, 36, 20, 40)$&$110$& $(10, 30, 8, 32)$&$111$& 
$(15, 15, 9, 21)$&$112$&  $(18, 30, 14, 34)$\\\hline $113$& $(24, 30, 18, 36)$&$114$& $(30, 36, 24, 
42)$&$115$& $(30, 42, 26, 46)$&$116$& $(30, 50, 28, 52)$\\\hline $117$&  $(45, 45, 39, 51)$&$118$& $(15, 15, 
3, 27)$&$119$& $(18, 30, 6, 42)$&$120$& $(30, 42, 18, 54)$\\\hline $121$& $(45, 45, 33, 57)$&$122$&  $(8, 32, 
2, 38)$&$123$& $(14, 34, 4, 44)$&$124$& $(18, 18, 3, 33)$\\\hline $125$& $(18, 36, 6, 48)$&$126$& $(24, 42, 
12, 54)$&$127$&  $(26, 46, 16, 56)$&$128$& $(28, 52, 22, 58)$\\\hline $129$& $(42, 42, 27, 57)$&$130$& $(10, 
30, 2, 38)$&$131$& $(15, 45, 6, 54)$&$132$&  $(18, 30, 4, 44)$\\\hline $133$& $(24, 30, 6, 48)$&$134$& $(30, 
36, 12, 54)$&$135$& $(30, 42, 16, 56)$&$136$& $(30, 50, 22, 58)$\\\hline $137$&  $(24, 36, 6, 54)$&$138$& 
$(30, 30, 6, 54)$&$139$& $(30, 30, 18, 42)$&$140$& $(12, 12, 6, 18)$\\\hline $141$& $(12, 24, 8, 28)$&$142$&  
$(12, 36, 9, 39)$&$143$& $(12, 48, 10, 50)$&$144$& $(24, 24, 14, 34)$\\\hline $145$& $(24, 36, 18, 
42)$&$146$& $(24, 48, 21, 51)$&$147$&  $(36, 36, 26, 46)$&$148$& $(36, 48, 32, 52)$\\\hline $149$& $(48, 48, 
42, 54)$&$150$& $(14, 26, 2, 38)$&$151$& $(18, 42, 6, 54)$&$152$&  $(34, 46, 22, 58)$\\\hline $153$& $(14, 34, 
12, 36)$&$154$& $(18, 18, 12, 24)$&$155$& $(26, 46, 24, 48)$&$156$& $(42, 42, 36, 48)$\\\hline $157$&  $(18, 
42, 12, 48)$&$158$& $(20, 20, 2, 38)$&$159$& $(20, 40, 6, 54)$&$160$& $(40, 40, 22, 58)$\\\hline $161$& $(20, 
20, 8, 32)$&$162$&  $(40, 40, 28, 52)$&$163$& $(20, 20, 14, 26)$&$164$& $(20, 40, 18, 42)$\\\hline $165$& 
$(40, 40, 34, 46)$&$166$& $(20, 40, 12, 48)$&$167$&  $(24, 24, 4, 44)$&$168$& $(36, 36, 16, 
56)$\\\hline $169$& $(30, 30, 12, 48)$&$170$& $(30, 30, 24, 36)$&$171$&$(30,30,20,40)$&$172$&$(30,30,10,50)$\\\hline $173$&$(20,40,15,45)$&$174$&$(20,40,10,50)$&$175$&$(20,20,10,30)$&$176$&$(40,40,30,50)$\\\hline $177$&$(20,20,5,35)$&$178$&$(40,40,25,55)$&$179$&$(15,45,10,50)$&$180$&$(15,15,5,25)$\\\hline $181$&$(45,45,35,55)$&&&&&&\\
\hline
\end{tabular}
$$
\item[(b)] 
\begin{eqnarray*}
\mathcal{C}(\Q(\sqrt{2}))&=&\mathcal{C}_{48}(\Q(\sqrt{2}))\\&=&
\big\{\tfrac{2+\sqrt{2}}{3},4-2\sqrt{2},\tfrac{1+\sqrt{2}}{2},-3+3\sqrt{2},\tfrac{4}{3},\sqrt{2},\tfrac{3}{2},\tfrac{2+\sqrt{2}}{2},2,,1+\sqrt{2},
  \\
&&\hphantom{\big\{}3,2+\sqrt{2}, 4,\tfrac{6+3\sqrt{2}}{2},3+2\sqrt{2},4+2\sqrt{2},4+3\sqrt{2}\big\}\,.
\end{eqnarray*}
Moreover, all solutions of $f_{m}(d)\in\Q(\sqrt{2})$, where $m\geq 4$ and
$d\in D_{m}$, are either of the form
(xii) or (xiii) of Theorem~\ref{intersectq} or are  given,  up to multiplication of $m$
and $d$ by the same factor, by $m=48$ and $d$ from the following list.
$$
\begin{tabular}{|r|c|r|c|r|c|r|c|}
\hline
$1$&$(6, 18, 4, 20)$&$2$& $(10, 36, 8, 38)$&$3$& $(12, 12, 6, 18)$&$4$& $(12, 22, 8, 
  26)$\\\hline $5$& $(12, 30, 9, 33)$&$6$& $(12, 38, 10, 40)$&$7$& $(18, 18, 10, 26)$&$8$& $(18, 24, 
  12, 30)$\\\hline $9$& $(18, 36, 15, 39)$&$10$& $(24, 30, 18, 36)$&$11$& $(26, 36, 22, 40)$&$12$& $(30, 
  30, 22, 38)$\\\hline $13$& $(30, 42, 28, 44)$&$14$& $(36, 36, 30, 42)$&$15$& $(4, 10, 2, 12)$&$16$& $(8,
   40, 6, 42)$\\\hline $17$& $(9, 33, 6, 36)$&$18$& $(10, 26, 6, 30)$&$19$& $(12, 18, 6, 24)$&$20$& $(15, 
  39, 12, 42)$\\\hline $21$& $(18, 30, 12, 36)$&$22$& $(20, 26, 12, 34)$&$23$& $(22, 28, 14, 
  36)$&$24$& $(22, 38, 18, 42)$\\\hline $25$& $(30, 36, 24, 42)$&$26$& $(38, 44, 36, 46)$&$27$& $(10, 22, 
  8, 24)$&$28$& $(18, 18, 12, 24)$\\\hline $29$& $(26, 38, 24, 40)$&$30$& $(30, 30, 24, 36)$&$31$& $(18, 
  30, 16, 32)$&$32$& $(4, 38, 2, 40)$\\\hline $33$& $(8, 8, 2, 14)$&$34$& $(9, 15, 3, 21)$&$35$& $(10, 22,
   4, 28)$&$36$& $(10, 44, 8, 46)$\\\hline $37$& $(12, 30, 6, 36)$&$38$& $(18, 18, 6, 30)$&$39$& $(18, 36,
   12, 42)$&$40$& $(20, 22, 8, 34)$\\\hline $41$& $(26, 28, 14, 40)$&$42$& $(26, 38, 20, 44)$&$43$& $(30, 
  30, 18, 42)$&$44$& $(33, 39, 27, 45)$\\\hline $45$& $(40, 40, 34, 46)$&$46$& $(6, 30, 2, 
  34)$&$47$& $(10, 12, 2, 20)$&$48$& $(12, 18, 3, 27)$\\\hline $49$& $(12, 26, 4, 34)$&$50$& $(12, 36, 6, 
  42)$&$51$& $(18, 24, 6, 36)$&$52$& $(18, 30, 8, 40)$\\\hline $53$& $(18, 42, 14, 46)$&$54$& $(22, 36, 
  14, 44)$&$55$& $(24, 30, 12, 42)$&$56$& $(30, 36, 21, 45)$\\\hline $57$& $(36, 38, 28, 46)$&$58$& $(10, 
  26, 2, 34)$&$59$& $(18, 30, 6, 42)$&$60$& $(22, 38, 14, 46)$\\\hline $61$& $(12, 24, 2, 34)$&$62$& $(24,
   24, 6, 42)$&$63$& $(24, 36, 14, 46)$&$64$& $(12, 24, 10, 26)$\\\hline $65$& $(24, 24, 18, 
  30)$&$66$& $(24, 36, 22, 38)$&$67$& $(16, 16, 10, 22)$&$68$& $(32, 32, 26, 38)$\\\hline $69$& $(18, 18, 
  2, 34)$&$70$& $(30, 30, 14, 46)$&$71$& $(16, 32, 6, 42)$&$72$&$(24,24,16,32)$\\\hline $73$&$(24,24,8,40)$&$74$&$(16,32,12,36)$&$75$&$(16,32,8,40)$&$76$&$(16,16,8,24)$\\\hline $77$&$(32,32,24,40)$&$78$&$(16,16,4,28)$&$79$&$(32,32,20,44)$&$80$&$(12,36,8,40)$\\\hline $81$&$(12,12,4,20)$&$82$&$(36,36,28,44)$&&&&\\
\hline
\end{tabular}
$$
\item[(c)]
\begin{eqnarray*}
\mathcal{C}(\Q(\sqrt{3}))&=&\mathcal{C}_{24}(\Q(\sqrt{3}))\\&=&
\big\{8-4\sqrt{3},\tfrac{3+2\sqrt{3}}{6},\tfrac{-3+3\sqrt{3}}{2},\tfrac{3+\sqrt{3}}{4},\tfrac{2+\sqrt{3}}{3},3-\sqrt{3},\tfrac{4}{3},\tfrac{1+\sqrt{3}}{2},-2+2\sqrt{3},\\
&&\hphantom{\big\{}
\tfrac{3}{2},\tfrac{3+\sqrt{3}}{3},\sqrt{3},\tfrac{2+\sqrt{3}}{2},2,\tfrac{3+2\sqrt{3}}{3},\tfrac{3+\sqrt{3}}{2},1+\sqrt{3},3,\tfrac{6+2\sqrt{3}}{3},2+\sqrt{3},4,\\
&&\hphantom{\big\{}3+\sqrt{3},\tfrac{5+3\sqrt{3}}{2},3+2\sqrt{3},4+2\sqrt{3},6+3\sqrt{3}, 7+4\sqrt{3},8+4\sqrt{3}\big\}\,.
\end{eqnarray*}
Moreover, all solutions of $f_{m}(d)\in\Q(\sqrt{3})$, where $m\geq 4$ and
$d\in D_{m}$, are either of the form
(xii) or (xiii) of Theorem~\ref{intersectq} or are  given,  up to multiplication of $m$
and $d$ by the same factor, by $m=24$ and $d$ from the following list.
$$
\begin{tabular}{|r|c|r|c|r|c|r|c|}
\hline
$1$&$(4, 8, 2, 10)$&$2$&$(6, 16, 4, 18)$&$3$&$(8, 10, 4, 14)$&$4$&$(8, 14, 5, 17)$\\\hline $5$&$(8, 
18, 6, 20)$&$6$&$(10, 16, 7, 19)$&$7$&$(14, 16, 10, 20)$&$8$&$(16, 20, 14, 22)$\\\hline $9$&$(3, 
15, 2, 16)$&$10$&$(4, 6, 2, 8)$&$11$&$(6, 14, 4, 16)$&$12$&$(9, 21, 8, 22)$\\\hline $13$&$(10, 12, 6, 
16)$&$14$&$(10, 18, 8, 20)$&$15$&$(12, 14, 8, 18)$&$16$&$(18, 20, 16, 22)$\\\hline $17$&$(10, 14, 8, 
16)$&$18$&$(4, 4, 2, 6)$&$19$&$(4, 14, 3, 15)$&$20$&$(5, 7, 3, 9)$\\\hline $21$&$(5, 17, 4, 18)$&$22$&$(6, 
10, 4, 12)$&$23$&$(7, 19, 6, 20)$&$24$&$(8, 14, 6, 16)$\\\hline $25$&$(10, 10, 6, 14)$&$26$&$(10, 16, 
8, 18)$&$27$&$(10, 20, 9, 21)$&$28$&$(14, 14, 10, 18)$\\\hline $29$&$(14, 18, 12, 20)$&$30$&$(17, 19, 
15, 21)$&$31$&$(20, 20, 18, 22)$&$32$&$(4, 10, 2, 12)$\\\hline $33$&$(10, 14, 6, 18)$&$34$&$(14, 20, 
12, 22)$&$35$&$(4, 16, 2, 18)$&$36$&$(6, 8, 2, 12)$\\\hline $37$&$(8, 10, 3, 15)$&$38$&$(8, 14, 4, 
18)$&$39$&$(8, 20, 6, 22)$&$40$&$(10, 16, 6, 20)$\\\hline $41$&$(14, 16, 9, 21)$&$42$&$(16, 18, 12, 
22)$&$43$&$(8, 10, 6, 12)$&$44$&$(14, 16, 12, 18)$\\\hline $45$&$(3, 9, 1, 11)$&$46$&$(4, 18, 2, 20)$&$47$&
$(6, 10, 2, 14)$&$48$&$(6, 20, 4, 22)$\\\hline $49$&$(10, 12, 4, 18)$&$50$&$(12, 14, 6, 20)$ &$51$&
$(14, 18, 10, 22)$&$52$&$(15, 21, 13, 23)$\\\hline $53$&$(4, 10, 1, 13)$&$54$&$(4, 20, 2, 22)$&$55$&
$(5, 7, 1, 11)$&$56$&$(5, 17, 2, 20)$\\\hline $57$&$(6, 14, 2, 18)$&$58$&$(7, 19, 4, 22)$&$59$&$(8, 
10, 2, 16)$&$60$&$(10, 14, 4, 20)$\\\hline $61$&$(10, 18, 6, 22)$&$62$&$(14, 16, 8, 22)$&$63$&$(14, 
20, 11, 23)$&$64$&$(17, 19, 13, 23)$\\\hline $65$&$(6, 8, 1, 13)$&$66$&$(6, 16, 2, 20)$&$67$&$(8, 12, 
2, 18)$&$68$&$(8, 18, 4, 22)$\\\hline $69$&$(12, 16, 6, 22)$&$70$&$(16, 18, 11, 23)$&$71$&$(6, 8, 4, 
10)$&$72$&$(6, 16, 5, 17)$\\\hline $73$&$(8, 12, 6, 14)$&$74$&$(8, 18, 7, 19)$&$75$&$(12, 16, 10, 
18)$&$76$&$(16, 18, 14, 20)$\\\hline $77$&$(6, 12, 4, 14)$&$78$&$(12, 18, 10, 20)$&$79$&$(6, 18, 2, 
22)$&$80$&$(10, 12, 2, 20)$\\\hline $81$&$(12, 14, 4, 22)$&$82$&$(8, 14, 2, 20)$&$83$&$(10, 16, 4, 
22)$&$84$&$(8, 16, 2, 22)$\\\hline $85$&$(10, 14, 2, 22)$&$86$&$(12, 12, 2, 22)$&$87$&$(12, 12, 10, 
14)$&$88$&$(4, 14, 2, 16)$\\\hline $89$&$(10, 10, 4, 16)$&$90$&$(10, 20, 8, 22)$&$91$&$(14, 14, 8, 
20)$&$92$&$(6, 12, 2, 16)$\\\hline $93$&$(12, 18, 8, 22)$&$94$&$(8, 8, 6, 10)$&$95$&$(16, 16, 14, 
18)$&$96$&$(10, 10, 2, 18)$\\\hline $97$&$(14, 14, 6, 22)$&$98$&$(10, 10, 8, 12)$&$99$&$(14, 14, 12, 16)$&$100$&$(12,12,8,16)$\\\hline $101$&$(12,12,4,20)$&$102$&$(8,16,6,18)$&$103$&$(8,16,4,20)$&$104$&$(8,8,4,12)$\\\hline $105$&$(16,16,12,20)$&$106$&$(8,8,2,14)$&$107$&$(16,16,10,22)$&$108$&$(6,18,4,20)$\\\hline $109$&$(6,6,2,10)$&$110$&$(18,18,14,22)$&&&&\\
\hline
\end{tabular}
$$
\end{itemize}
\end{coro}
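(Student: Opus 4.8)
The plan is to deduce the whole statement from Theorem~\ref{t1} by specialising $n$, and then to carry out the remaining finite (computer-assisted) enumeration.

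\emph{Step 1: reduction via Theorem~\ref{t1}.} I first recall the identifications $\Q(\zeta_5)^+=\Q(\sqrt 5)$, $\Q(\zeta_8)^+=\Q(\sqrt 2)$ and $\Q(\zeta_{12})^+=\Q(\sqrt 3)$, coming from $\zeta_5+\zeta_5^{-1}=(\sqrt 5-1)/2$, $\zeta_8+\zeta_8^{-1}=\sqrt 2$ and $\zeta_{12}+\zeta_{12}^{-1}=\sqrt 3$. Applying Theorem~\ref{t1} with $n=5$, $n=8$ and $n=12$, and using $\operatorname{lcm}(2\cdot 5,12)=60$, $\operatorname{lcm}(2\cdot 8,12)=48$ and $\operatorname{lcm}(2\cdot 12,12)=24$, I obtain at once the equalities $\mathcal{C}(\Q(\sqrt 5))=\mathcal{C}_{60}(\Q(\sqrt 5))$, $\mathcal{C}(\Q(\sqrt 2))=\mathcal{C}_{48}(\Q(\sqrt 2))$ and $\mathcal{C}(\Q(\sqrt 3))=\mathcal{C}_{24}(\Q(\sqrt 3))$, and also the structural claim that every solution of $f_m(d)\in L$ (with $L$ the appropriate quadratic field) is either of the form~(xii) or~(xiii) of Theorem~\ref{intersectq} or, after dividing $m$ and the entries of $d$ by $\gcd(m,k_1,k_2,k_3,k_4)$ and then rescaling so that $m$ equals the relevant value in $\{60,48,24\}$, arises from a tuple at that level. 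Hence it only remains to determine, for $m\in\{60,48,24\}$, the finite set $f_m(D_m)\cap L=\mathcal{C}_m(L)$ and a list of tuples in $D_m$ realising it.

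\emph{Step 2: the finite search.} For each such $m$ I run over all $d=(k_1,k_2,k_3,k_4)$ with $k_3<k_1\le k_2<k_4\le m-1$ and $k_1+k_2=k_3+k_4$, i.e. over $D_m$ as in Definition~\ref{fmddefi}, evaluate $f_m(d)$, which lies in $\Q(\zeta_m)^+$ by Fact~\ref{fmdg1}, and test whether $f_m(d)\in L$. This test can be done symbolically, by checking invariance of $f_m(d)$ under the subgroup $G(\Q(\zeta_m)^+/L)$ of the (Abelian) group $G(\Q(\zeta_m)^+/\Q)$ --- which is meaningful because $L\subset\Q(\zeta_m)^+$ for each relevant pair $(m,L)$ --- or numerically, by approximating $f_m(d)$ to high precision, recognising it as $a+b\sqrt{c}$ with $a,b\in\Q$ of small height and $c\in\{5,2,3\}$, and confirming the resulting identity of cyclotomic integers exactly. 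Collecting the values $f_m(d)\in L$ yields the displayed sets $\mathcal{C}_m(L)$; recording the occurring tuples $d$ (one copy of each) yields the tables, which by Step~1 --- together with the families~(xii) and~(xiii) --- exhaust all solutions of $f_m(d)\in L$.

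\emph{Main obstacle.} The real work is the size and bookkeeping of the enumeration, most acutely for $m=60$: $D_{60}$ has several hundred members, many sharing the same value under $f_{60}$, so one must take care to list each distinct value once and to ensure that every $d\in D_{60}$ with $f_{60}(d)\in\Q(\sqrt 5)$ not already of the form~(xii)/(xiii) appears in the table. A secondary subtlety is to verify, in the quadratic-looking borderline cases, that a value genuinely lies in the intended $L$ rather than in some other quadratic subfield of $\Q(\zeta_m)^+$ (for instance $\Q(\sqrt{15})\subset\Q(\zeta_{60})^+$); the exact Galois-invariance test settles this. No idea beyond Theorem~\ref{t1} is required.
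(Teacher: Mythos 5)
Your proposal matches the paper's own proof: the paper likewise obtains the reduction to $m=60,48,24$ by applying Theorem~\ref{t1} with $n=5,8,12$ (using $\Q(\zeta_5)^+=\Q(\sqrt5)$, $\Q(\zeta_8)^+=\Q(\sqrt2)$, $\Q(\zeta_{12})^+=\Q(\sqrt3)$ and $\operatorname{lcm}(2n,12)\in\{60,48,24\}$) and then settles the rest by a direct finite computation over $D_m$. Your elaboration of how to certify membership in $L$ exactly (Galois invariance over the relevant quadratic subfield) is a reasonable filling-in of what the paper leaves implicit as ``a direct computation.''
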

\begin{proof}
Applying Theorem~\ref{t1} to the cases $n=5,8,12$, the assertions
follow from a direct computation. Note that in either case the last eleven
entries of the lists above derive from (i)-(xi) of Theorem~\ref{intersectq}. 
\end{proof}

\section{Determination of convex subsets of algebraic Delone
  sets by X-rays}

\begin{defi}\label{xray..}
\begin{itemize}
\item[(a)]
Let $F$ be a finite subset of $\C$, let $u\in
\mathbb{S}^{1}$ be a direction, and let $\mathcal{L}_{u}$ be the set
of lines in the complex plane in direction $u$. Then the {\em
  (discrete parallel)}\/ {\em X-ray} of $F$ {\em in direction} $u$ is
the function $X_{u}F: \mathcal{L}_{u} \rightarrow
\mathbb{N}_{0}:=\mathbb{N} \cup\{0\}$, defined by $$X_{u}F(\ell) :=
\operatorname{card}(F \cap \ell\,)\,.$$
\item[(b)]
Let $\mathcal{F}$ be a collection of finite subsets of
$\C$ and let $U\subset\mathbb{S}^{1}$ be a finite set
of directions. We say that the elements
of $\mathcal{F}$ are {\em determined} by the X-rays in the directions of $U$ if, for all $F,F' \in \mathcal{F}$, one has
$$
(X_{u}F=X_{u}F'\;\,\forall u \in U) \;  \Rightarrow\; F=F'\,.
$$
\end{itemize}
\end{defi}

The following negative result shows that, for algebraic Delone sets $\varLambda$, one has to impose some restriction on the finite subsets of
$\varLambda$ to
be determined. The proof only needs property (Hom).

\begin{fact}\cite[Prop.~ 3.1 and Remark 3.2]{H5}\label{source}
Let $\varLambda$ be an algebraic Delone set and let
$U\subset\mathbb{S}^{1}$ be a finite set of pairwise nonparallel
$\varLambda$-directions. Then the finite subsets of $\varLambda$ are
not determined by the X-rays in the directions of $U$. \qed
\end{fact}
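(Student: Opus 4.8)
The plan is to produce two distinct finite subsets of $\varLambda$ with identical X-rays in every direction of $U$; since $\varLambda$ is uniformly discrete, its bounded subsets are exactly its finite subsets, so this is all that is needed. Write $U=\{u_1,\dots,u_k\}$, the $u_j$ being pairwise nonparallel $\varLambda$-directions; the case $U=\varnothing$ is trivial because $\varLambda$ is infinite, so assume $k\geq 1$. First I would fix, for each $j$, a vector $w_j\in(\varLambda-\varLambda)\setminus\{0\}$ parallel to $u_j$; then $w_1,\dots,w_k$ are pairwise nonparallel and lie in $K_{\varLambda}$.

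Next I would build the abstract ``switching component''. For a vector $t=(t_1,\dots,t_k)$ of positive rationals put $v_j:=t_jw_j\in K_{\varLambda}$ and, for $S\subseteq\{1,\dots,k\}$, $z_S:=\sum_{j\in S}v_j\in K_{\varLambda}$. I claim $t$ can be chosen so that the $2^{k}$ points $z_S$ are pairwise distinct: a coincidence $z_S=z_{S'}$, after cancelling common indices so that $S\cap S'=\varnothing$ and $S\cup S'\neq\varnothing$, forces a relation $\sum_{j}\varepsilon_j t_j w_j=0$ over the nonempty index set $S\cup S'$ with signs $\varepsilon_j\in\{\pm1\}$; since no $w_j$ vanishes and no two $w_j$ are parallel, the $\R$-linear map $t\mapsto\sum_j\varepsilon_j t_j w_j$ on $\R^{k}$ has a kernel that is a proper linear subspace, so the ``bad'' set is a finite union of proper subspaces of $\R^{k}$ and a suitable positive rational $t$ avoids all of them. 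Fixing such a $t$, set
\[
F^{+}:=\{z_S : |S|\text{ even}\},\qquad F^{-}:=\{z_S : |S|\text{ odd}\}.
\]
These are disjoint nonempty finite subsets of $K_{\varLambda}$, hence $F^{+}\neq F^{-}$.

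Then I would verify $X_{u_j}F^{+}=X_{u_j}F^{-}$ for each $j$. Working in the group ring $\Z[(\C,+)]$, one has $[F^{+}]-[F^{-}]=\prod_{l=1}^{k}(e^{v_l}-e^{0})$, and this maps to $0$ under the ring homomorphism induced by the projection $\C\to\C/\R v_j$ because its factor $e^{v_j}-e^{0}$ does; reading off coefficients gives $\operatorname{card}(F^{+}\cap\ell)=\operatorname{card}(F^{-}\cap\ell)$ for every line $\ell$ parallel to $u_j$. (Concretely: $S\mapsto S\triangle\{j\}$ is a parity-reversing bijection of $\{S : z_S\in\ell\}$, since $z_{S\triangle\{j\}}=z_S\pm v_j$ lies on the same line in direction $u_j$ and all $z_S$ are distinct.) Thus $F^{+}$ and $F^{-}$ are distinct finite subsets of $K_{\varLambda}$ with the same X-rays in all directions of $U$.

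Finally I would transport this into $\varLambda$ using property (Hom): there is a homothety $h$ of $\C$ with $h(F^{+}\cup F^{-})\subset\varLambda$. Since $h$ is injective, $h(F^{+})\neq h(F^{-})$; since $h$ preserves directions and carries each line bijectively onto a line parallel to it, $X_{u}h(F^{\pm})(\ell)=X_{u}F^{\pm}(h^{-1}(\ell))$, whence $X_{u}h(F^{+})=X_{u}h(F^{-})$ for every $u\in U$. So $h(F^{+})$ and $h(F^{-})$ are distinct finite subsets of $\varLambda$ not distinguished by the X-rays in the directions of $U$. The only real point requiring care is the genericity step forcing all $z_S$ to be distinct, which is what guarantees that the switching component has $(0,1)$-coefficients and that its two halves are genuine sets rather than multisets; everything else is bookkeeping.
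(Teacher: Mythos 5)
Your proof is correct and is essentially the argument the paper defers to: the paper states this as a Fact citing \cite[Prop.~3.1 and Remark~3.2]{H5} (noting that only property (Hom) is needed), and that cited proof is exactly your switching-component construction --- form the $2^{k}$ subset sums of generically scaled vectors $v_j$ parallel to the given directions, split them by parity to get two distinct finite subsets of $K_{\varLambda}$ with equal X-rays, and push them into $\varLambda$ by a homothety via (Hom). Your genericity step ensuring the $z_S$ are pairwise distinct is handled correctly and is indeed the one point requiring care.
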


Here, we shall focus on the convex subsets of algebraic Delone
sets. One has the following fundamental 
result which even holds for Delone sets $\varLambda$ with property
(Hom). See Figure~\ref{fig:tilingupolygon} for an
illustration of direction
(i)$\Rightarrow$(ii).

\begin{fact}\cite[Prop.~ 4.6 and Lemma 4.5]{H5}\label{characungen}
Let $\varLambda$ be an algebraic Delone set and let $U\subset\mathbb{S}^{1}$ be a set of two or more pairwise nonparallel $\varLambda$-directions. The following statements are equivalent:
\begin{itemize}
\item[(i)]
The convex subsets of $\varLambda$ are determined by the X-rays in the directions of $U$.
\item[(ii)]
There is no $U$-polygon in $\varLambda$.
\end{itemize}
In addition, if $\operatorname{card}(U)<4$, then there is a $U$-polygon in $\varLambda$.  \qed
\end{fact}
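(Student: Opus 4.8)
The plan is to prove the three assertions separately, following Gardner--Gritzmann~\cite{GG} and its Delone-set adaptation in~\cite{H5}; only property (Hom) is needed.

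\emph{$\neg(ii)\Rightarrow\neg(i)$.} Let $P$ be a $U$-polygon in $\varLambda$ with vertices $v_0,\dots,v_{N-1}$ in cyclic order. Since the vertices of a convex polygon are extreme points, no three of them are collinear; hence for each $u\in U$ the line through a vertex $v$ in direction $u$ meets exactly one further vertex, and $v\mapsto$ that vertex is a fixed-point-free involution, i.e., a perfect matching $M_u$ of $\{v_0,\dots,v_{N-1}\}$. In particular $N$ is even and $N\ge 4$. The edges of $M_u$ are chords of $P$ parallel to $u$, hence pairwise noncrossing, and in any noncrossing perfect matching of points in convex position each matched pair cuts off two arcs with an even number of vertices (the vertices strictly between a matched pair are matched among themselves); thus every $M_u$-edge joins vertices at odd cyclic distance. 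Consequently the alternating sign function $w(v_j):=(-1)^j$ has line sum $0$ on every line $\ell$ parallel to some $u\in U$, because $\ell$ contains either no vertex or exactly the two (bichromatic) endpoints of an $M_u$-edge. Let $V^+,V^-$ be the sign classes (both nonempty) and put $I:=(\operatorname{conv}(P)\cap\varLambda)\setminus(V^+\cup V^-)$, $C:=I\cup V^+$, $C':=I\cup V^-$. These are finite ($\varLambda$ is uniformly discrete), distinct, and convex subsets of $\varLambda$: a vertex $v\in V^-$ has a line through it meeting $\operatorname{conv}(P)$ only in $v$, so $v\notin\operatorname{conv}(C)$ and hence $\operatorname{conv}(C)\cap\varLambda=I\cup V^+=C$, and symmetrically for $C'$. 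Finally $X_uC=X_uC'$ for all $u\in U$, since on each line in direction $u$ the sets $V^+$ and $V^-$ contribute the same number of points by the choice of $w$. Thus the convex subsets of $\varLambda$ are not determined by the X-rays in the directions of $U$.

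\emph{$\neg(i)\Rightarrow\neg(ii)$ (sketch).} Let $C\ne C'$ be convex subsets of $\varLambda$ with $X_uC=X_uC'$ for all $u\in U$, chosen with $|C\setminus C'|$ minimal, and set $E:=C\setminus C'$, $E':=C'\setminus C$. Summing one X-ray gives $|E|=|E'|\ge 1$; the signed weight $w:=\mathbbm{1}_{E}-\mathbbm{1}_{E'}$ on $\varLambda$ has support $E\cup E'$ and line sum $0$ on every line parallel to some $u\in U$; and, since $C,C'$ are \emph{convex} subsets, any point of $\operatorname{conv}(C')\cap\varLambda$ lies in $C'$, so $E\cap\operatorname{conv}(E')=\emptyset$ and, symmetrically, $E'\cap\operatorname{conv}(E)=\emptyset$. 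It suffices to show that $P:=\operatorname{conv}(E\cup E')$ is a $U$-polygon, for its vertices then lie in $E\cup E'\subseteq\varLambda$. Each vertex $v$ of $P$ is an extreme point of every chord of $P$ through it, and the vanishing line sum of $w$ produces a further point of $E\cup E'$ on the line $\ell$ through $v$ in any direction $u\in U$; combining the extremality of $v$, the separation $E\cap\operatorname{conv}(E')=\emptyset=E'\cap\operatorname{conv}(E)$, and minimality of $|E|$ (to perform an elementary switch along $\ell$, decreasing $|E|$, should $\ell$ cut through the interior of $P$), one upgrades this to a further \emph{vertex} of $P$ on $\ell$. I expect exactly this upgrade --- from ``another support point on $\ell$'' to ``another vertex on $\ell$'' --- to be the main obstacle; it is the technical core of the Gardner--Gritzmann argument.

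\emph{The addendum.} Assume $\operatorname{card}(U)<4$; property (Hom) is used here. If $\operatorname{card}(U)=2$, a nondegenerate parallelogram having a pair of parallel sides in direction $u$ for each $u\in U$ is a $U$-polygon; if $U=\{u_1,u_2,u_3\}$, a short computation (the affine image of the regular hexagon, whose main diagonals point in three prescribed directions) shows that a centrally symmetric affinely regular hexagon with main diagonals parallel to $u_1,u_2,u_3$ is a $U$-polygon. In either case it suffices to realise the polygon with all vertices in $K_\varLambda$ and then invoke (Hom): a homothety $z\mapsto\lambda z+t$ with real $\lambda>0$ preserves directions, vertices and incidences, hence carries a $U$-polygon to a $U$-polygon, and some such homothety maps the chosen finite subset of $K_\varLambda$ into $\varLambda$. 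To place the vertices in $K_\varLambda$, pick $\lambda_k\in(\varLambda-\varLambda)\setminus\{0\}\subset K_\varLambda$ parallel to $u_k$: take $\{0,\lambda_1,\lambda_2,\lambda_1+\lambda_2\}$ in the parallelogram case, and $\{\pm p_1,\pm p_2,\pm p_3\}$ with $p_1:=c_1\lambda_1$, $p_3:=c_3\lambda_3$, $p_2:=p_1+p_3$ in the hexagon case, where $c_1,c_3\in\R$ solve $c_1\lambda_1+c_3\lambda_3=\lambda_2$. By Cramer's rule $c_1,c_3$ belong to $K_\varLambda^+$ --- here one uses $\overline{K_\varLambda}=K_\varLambda$ (Remark~\ref{remalg}) and that $K_\varLambda$ is a field --- and $c_1,c_3\ne 0$ since $u_1,u_2,u_3$ are pairwise distinct, so $p_1,p_3$ are $\R$-linearly independent and the hexagon is nondegenerate. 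This yields a $U$-polygon in $\varLambda$.
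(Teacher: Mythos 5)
The paper does not prove this statement at all: it is quoted as a Fact with a citation to \cite[Prop.~4.6 and Lemma~4.5]{H5}, so there is no in-paper argument to compare against. Measured against the known proof in the literature, your treatment of the implication $\neg(\textnormal{ii})\Rightarrow\neg(\textnormal{i})$ is correct and complete (the parity argument via noncrossing parallel chords, the alternating $2$-colouring of the vertices, and the verification that $I\cup V^{+}$ and $I\cup V^{-}$ are distinct convex subsets with equal X-rays are all sound), and your construction for the addendum (parallelogram for $\operatorname{card}(U)=2$, affinely regular hexagon with main diagonals in the prescribed directions for $\operatorname{card}(U)=3$, realised in $K_{\varLambda}$ via Cramer's rule and pushed into $\varLambda$ by (Hom)) is also correct.

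The genuine gap is the implication $\neg(\textnormal{i})\Rightarrow\neg(\textnormal{ii})$, which you yourself label a sketch. The vanishing of the line sums of $w=\mathbbm{1}_{E}-\mathbbm{1}_{E'}$ only yields, for a vertex $v$ of $P=\operatorname{conv}(E\cup E')$ and $u\in U$, a further point of $E\cup E'$ on the line $\ell$ through $v$ in direction $u$; upgrading this to a further \emph{vertex} of $P$ is the entire content of the Gardner--Gritzmann lemma, and your proposed mechanism for it does not work as stated. An ``elementary switch along $\ell$'' (trading a point of $E$ for a point of $E'$ on the single line $\ell$) preserves only the X-ray in direction $u$: it alters the X-rays in the other directions of $U$ and need not produce convex subsets, so it cannot be played against the minimality of $\lvert C\setminus C'\rvert$, which was taken over pairs of \emph{convex} subsets with \emph{all} X-rays equal. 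The actual argument is a careful analysis of how $E$ and $E'$ are distributed along the boundary of $P$ (using the separations $E\cap\operatorname{conv}(E')=\varnothing=E'\cap\operatorname{conv}(E)$ to force the extremal point of $E\cup E'$ on $\ell$ to be extreme in $P$), and none of that is present. As it stands, the equivalence is only proved in one direction.
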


The proof of the following central result uses Darboux's theorem on second 
midpoint polygons; see~\cite{D}, ~\cite{GM} or~\cite[Ch.~1]{G}.

\begin{fact}\cite[Prop.~ 4.2]{GG}\label{uaffine}
Let $U\subset\mathbb{S}^1$ be a finite set of directions. Then there
exists a $U$-polygon if and only if there is an affinely regular
polygon such that each direction in $U$ is parallel to one of its edges.  \qed
\end{fact}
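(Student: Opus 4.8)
The plan is to prove the two implications separately; passing from an affinely regular polygon to a $U$-polygon is elementary, whereas the converse is the one that needs Darboux's iterated midpoint construction. For the elementary direction, suppose $T$ is an affinely regular $N$-gon each of whose edges is parallel to some member of $U$, so that $U$ lies in the set of edge directions of $T$. If $N$ is even, I would check that $T$ is already a $U$-polygon: since affine maps preserve parallelism it suffices to treat a genuine regular $N$-gon, and there the chord joining the vertices $v_i$ and $v_j$ has a direction depending only on $i+j\bmod N$, the edge directions being exactly those with $i+j$ odd; as $N$ is even, an edge direction has no vertex of tangency, so the line through any vertex parallel to such a direction meets a second vertex. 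If $N$ is odd, I would first replace $T$ by a suitable affinely regular $2N$-gon: a regular $N$-gon with $N$ odd and a regular $2N$-gon each have precisely $N$ edge directions, equally spaced by $\pi/N$ and differing only by an overall rotation, so applying to a rotated regular $2N$-gon the same affine automorphism that presents $T$ as the image of a regular $N$-gon produces an affinely regular $2N$-gon whose edge directions coincide with those of $T$; now the even case applies.

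For the substantial direction, let $P$ be a $U$-polygon with vertices $v_0,\dots,v_{n-1}$ in cyclic order. The first step is to extract a combinatorial normal form. Since $P$ is nondegenerate, no line contains three of its vertices, so for each $u\in U$ the $U$-polygon property matches the vertices of $P$ in pairs via the chords parallel to $u$; sweeping a line in direction $u$ across $P$ and using convexity, one finds that $n$ is even, that $u$ is itself an edge direction of $P$, and that after a cyclic relabelling depending on $u$ the matching takes the form $i\mapsto c_u-i\pmod n$ with $c_u$ odd --- equivalently, $v_i$ and $v_{c_u-i}$ always share the coordinate orthogonal to $u$. The point of this normal form is that it is inherited by the midpoint polygon $M(P)$ with vertices $w_i=\tfrac12(v_i+v_{i+1})$: from $w_i-w_{(c_u-1)-i}=\tfrac12\bigl((v_i-v_{c_u-i})+(v_{i+1}-v_{c_u-(i+1)})\bigr)$ one sees that this is a sum of two vectors parallel to $u$, so $M(P)$ again carries the normal form, with $c_u$ replaced by $c_u-1$.

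Iterating, one obtains nondegenerate convex $n$-gons $M^k(P)$, each carrying the normal form with the constants shifted by $k$. By Darboux's theorem on second midpoint polygons (cf.~\cite{D},~\cite{GM} or~\cite[Ch.~1]{G}), the suitably renormalised polygons $M^{2k}(P)$ converge, as $k\to\infty$, to an affinely regular $n$-gon $T$; nondegeneracy of $P$ guarantees that the limit is genuinely affinely regular rather than degenerate. Passing to the limit along a subsequence on which, for every $u\in U$, the shifted constant has stabilised modulo $n$ (which happens periodically in $k$), the coincidence of coordinates is a closed condition and survives, so $T$ has, for each $u\in U$, its vertices matched in pairs $\{t_i,t_{d_u-i}\}$ by chords parallel to $u$, with $d_u$ odd modulo $n$. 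Transporting this back through the affine automorphism presenting $T$ as the image of a regular $n$-gon, and again using the description of chord directions by $i+j\bmod n$, a pairing of the form $i\mapsto d-i$ with $d$ odd is realised precisely by an edge direction; hence every $u\in U$ is parallel to an edge of $T$, as required.

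I expect the real obstacle to lie in the third step, namely in pinning down the precise form of Darboux's theorem: one must describe the midpoint dynamics (in the discrete Fourier basis the midpoint operator multiplies the $k$-th mode by $\tfrac12(1+\zeta_n^{k})$, of modulus $|\cos(\pi k/n)|$, which is largest and equal for $k=1$ and $k=n-1$ and vanishes for $k=n/2$), explain why it is the \emph{second} midpoint polygon --- equivalently the even subsequence together with the correct affine renormalisation --- that removes the rotational oscillation between the modes $1$ and $n-1$, and verify the nondegeneracy of the limit, where it matters that for a convex polygon traversed with the correct orientation the mode $1$ strictly dominates the mode $n-1$. By comparison, the combinatorial bookkeeping of the first step and the computation with affinely regular polygons in the last step are routine.
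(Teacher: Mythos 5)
The paper does not prove this statement at all: it is quoted as Fact~\ref{uaffine} directly from \cite[Prop.~4.2]{GG}, with only the preceding remark that the proof rests on Darboux's theorem on second midpoint polygons. Your argument is a correct reconstruction of precisely that proof (the pairing normal form $i\mapsto c_u-i$ with $c_u$ odd for each $u\in U$, its propagation under the midpoint operator, the Darboux limit along a subsequence where the shifted constants stabilise, and transport back through the affine automorphism), so it takes essentially the same route as the source the paper cites.
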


\begin{rem}\label{urem}
Clearly, $U$-polygons have an even number of vertices. Moreover, an
affinely regular polygon with an even number of vertices is a 
$U$-polygon if and only if each direction of $U$ is parallel to one of
its edges. On the other hand, it is important to note that a $U$-polygon need not be affinely regular, even if it is a
$U$-polygon in an algebraic Delone set. For example, there is a
$U$-icosagon in the vertex set of the
T\"ubingen triangle tiling of the plane (a $5$-cyclotomic
model set; see~\cite[Fig.~1,
  Corollary 14 and Example 15]{H4}), which cannot be affinely regular
  since that restricts the number of vertices to $3$, $4$, $5$, $6$ or $10$ by~\cite[Corollary 4.2]{H3}; see also~\cite[Example
  4.3]{GG} for an example in the case of the square lattice. In
  general, there is an affinely regular polygon with $n\geq 3$ vertices in
  an algebraic Delone set $\varLambda$ if and only if
  $\Q(\zeta_n)^+\subset K_{\varLambda}^+$, the latter being a relation
  which (due to property (Alg)) can only hold 
  for finitely many values of $n$; cf.~~\cite[Thm.~ 3.3]{H3}.
\end{rem}

We can now prove our main result on $U$-polygons which is an extension
of~\cite[Thm.~
4.5]{GG}. In fact, we use the same
arguments as introduced by Gardner and Gritzmann in conjunction with
Fact~\ref{crkn4gen} and Theorem~\ref{algcoro}. Note that the result
even holds for arbitrary sets $\varLambda$ with property (Alg).

\begin{theorem}\label{main}
Let $\varLambda$ be an algebraic Delone set. Further, let $U\subset\mathbb{S}^1$ be a set of four or more pairwise
nonparallel $\varLambda$-directions and suppose the existence of a
$U$-polygon. Then the
cross ratio of slopes of any four directions of $U$, arranged in order
of increasing angle with the positive real axis, is an element of the
 set
$\mathcal{C}(K_{\varLambda}^+)$.  Moreover,
$\mathcal{C}(K_{\varLambda}^+)$ is finite and $\operatorname{card}(U)$ is bounded above by a finite number
$b_{\varLambda}\in\N$ that only depends on $\varLambda$. 
\end{theorem}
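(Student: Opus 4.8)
The plan is to reduce the cross-ratio statement to the machinery already assembled: Fact~\ref{uaffine} turns the hypothesis ``there is a $U$-polygon'' into the existence of an affinely regular polygon $P$, say with $N$ vertices, each direction of $U$ being parallel to an edge of $P$. By applying the affine automorphism $\Psi$ carrying $P$ to a genuine regular $N$-gon and invoking Fact~\ref{crossratio}, the cross ratio of the slopes of any four directions $u_1,u_2,u_3,u_4\in U$ (in order of increasing angle) equals the cross ratio of the slopes of four edges of the regular $N$-gon, hence of four vectors of the form $\zeta_{N}^{k}-\zeta_N^{k-1}=-\zeta_N^{k-1}(1-\zeta_N)$; up to real scalars these are $\zeta_N^{k}$ for the appropriate exponents. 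A direct computation of the cross ratio of four such slopes $\operatorname{sl}(\zeta_N^{k_1}),\dots,\operatorname{sl}(\zeta_N^{k_4})$ then puts it in the shape $f_m(d)$ for a suitable $m\mid 2N$ and $d\in D_m$ after reindexing; this is precisely the identification carried out in~\cite[Sec.~4]{GG} and I would simply quote that computation. Meanwhile Fact~\ref{crkn4gen} tells us this cross ratio lies in $K_\varLambda^+$. Combining, the cross ratio lies in $f_m(D_m)\cap K_\varLambda^+\subset\mathcal{C}(K_\varLambda^+)$, which is the first assertion.

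For the finiteness of $\mathcal{C}(K_\varLambda^+)$: property (Alg) says $[K_\varLambda:\Q]<\infty$, so $K_\varLambda^+$ is a real algebraic number field, and Theorem~\ref{algcoro} gives that $\mathcal{C}(L)$ is finite for every such field. Apply it with $L=K_\varLambda^+$.

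For the bound $b_\varLambda$ on $\operatorname{card}(U)$: here I would argue that a large $U$ forces a forbidden cross ratio. Order the directions of $U$ by increasing angle; from $\operatorname{card}(U)=k$ one extracts many $4$-subsets, and for each the cross ratio of slopes lies in the finite set $\mathcal{C}(K_\varLambda^+)$. The key quantitative input is that a $U$-polygon has, via Fact~\ref{uaffine}, an associated affinely regular $N$-gon whose edge directions contain $U$; and Theorem~\ref{algcoro} (together with the finiteness of the exceptional list and the fact that families (xii)--(xiii) have cross ratio $2$) bounds the relevant modulus $m=m_{K_\varLambda^+}$, hence bounds $N$ — because the exponents $k_1,k_2,k_3,k_4$ producing a value in $\mathcal{C}(K_\varLambda^+)$ all come (up to a common factor) from a single modulus $m_L$, so $N\mid m_L$ and only finitely many affinely regular polygons, hence only finitely many possible direction sets $U$, can occur. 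Consequently $\operatorname{card}(U)\le N\le m_{K_\varLambda^+}=:b_\varLambda$, a number depending only on $\varLambda$ through $K_\varLambda^+$.

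The main obstacle I expect is the bookkeeping in the last paragraph: carefully passing from ``every four of the $U$-directions has cross ratio in $\mathcal{C}(K_\varLambda^+)$'' to ``$U$ sits among the edges of a bounded affinely regular polygon.'' One must rule out configurations where $U$ is large but, say, all the constrained cross ratios equal the ubiquitous value $2$ coming from families (xii)--(xiii) of Theorem~\ref{intersectq}; the resolution is that those families still correspond to edges of an affinely regular polygon via Fact~\ref{uaffine}, and the modulus constraint $m=m_{K_\varLambda^+}$ from Theorem~\ref{algcoro} applies uniformly, pinning down $N$. The cross-ratio computation identifying $\langle\operatorname{sl}(\zeta_N^{k_1}),\dots\rangle$ with $f_m(d)$ is routine and I would not reproduce it, referring instead to~\cite[Sec.~4]{GG}.
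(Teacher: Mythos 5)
Your first two assertions are handled correctly and exactly as in the paper: Fact~\ref{uaffine} plus the linear automorphism $\Psi$, Fact~\ref{crossratio}, Fact~\ref{crkn4gen}, and the trigonometric identification of the cross ratio with $f_m(d)$ give the membership in $\mathcal{C}(K_{\varLambda}^+)$, and Theorem~\ref{algcoro} applied to the real algebraic number field $L=K_{\varLambda}^+$ (which property (Alg) provides) gives the finiteness of that set.

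The bound on $\operatorname{card}(U)$ is where there is a genuine gap. Your pivotal claim is that Theorem~\ref{algcoro} pins down the modulus, so that ``$N\mid m_L$'' and hence $\operatorname{card}(U)\le N\le m_L$. That does not follow: Theorem~\ref{algcoro} says every solution of $f_m(d)\in L$ is \emph{either} from the finite list at $m=m_L$ \emph{or} of type (xii)/(xiii) of Theorem~\ref{intersectq}, and the latter families exist for \emph{every} even $m=2s$ (all with value $2\in L$), so the modulus is not constrained by the cross-ratio condition alone. You notice this obstacle, but your proposed resolution (``those families still correspond to edges of an affinely regular polygon \dots the modulus constraint applies uniformly'') is circular; nothing in it excludes arbitrarily large $m$. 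The missing idea is a pigeonhole and normalization step: assuming $\operatorname{card}(U)\ge 7$, at least four of the normalized directions $e^{h_j\pi i/m}$ lie in a single quadrant, and after rotating one may take $h_1=0$ and $h_j\le m/2$ for all $j$; a solution of type (xii) or (xiii) would then force $h_4=k_4=k+s>s=m/2$, a contradiction. Only this eliminates the infinite families and forces the quadruple to come from the finite list at $m=m_{\varLambda}$, whence those four angles are integer multiples of $\pi/m_{\varLambda}$; a further (separate) argument is then needed for directions in the second quadrant, where mixed quadruples can still realize type (xii)/(xiii) but the arithmetic of $k$ and $s$ keeps $h_4$ on the same grid. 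Note also that the quantity bounded is not $N$ (the affinely regular polygon furnished by Fact~\ref{uaffine} is far from unique and its vertex number need not divide anything); what gets bounded is the set of admissible direction angles, and with it $\operatorname{card}(U)$. As written, your argument does not yield a finite $b_{\varLambda}$.
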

\begin{proof}
Let $U$ be as in the assertion. By Fact~\ref{uaffine}, $U$ consists
of directions parallel to the edges of an affinely regular
polygon. There is thus a linear automorphism $\Psi$ of
the complex 
plane such that
$$
V:= \big\{ \Psi(u)/| \Psi(u)|\, \big |\, u\in U \big \}
$$ 
is contained in a set of directions that are equally spaced
in $\mathbb{S}^{1}$, i.e., the angle between each pair of adjacent
directions is the same. Since the directions of $U$ are pairwise
nonparallel, we may assume that there is an $m\in\N$ with $m\geq 4$ such that
each direction of $V$ is given by $e^{h\pi i/m}$, where
$h\in\N_{0}$ satisfies $h\leq m-1$. Let $u_{j}$, $1\leq
j\leq 4$, be four
directions of $U$, arranged in order
of increasing angle with the positive real axis. By
Fact~\ref{crkn4gen}, one has
$$q:=\big\langle
\operatorname{sl}(u_{1}),\operatorname{sl}(u_{2}),\operatorname{sl}(u_{3}),\operatorname{sl}(u_{4})\big\rangle\in K_{\varLambda}^+\,.$$ We may assume that $\Psi(u_{j})/| \Psi(u_{j})| =e^{h_{j}\pi i/m}$, where $h_j \in\N_{0}$,
$1\leq j \leq 4$, and, $h_1<h_2<h_3<h_4\leq
m-1$. Fact~\ref{crossratio} now implies
\begin{eqnarray*}
q&=&\big \langle \operatorname{sl}(\Psi(u_1)),\operatorname{sl}(\Psi(u_2)),\operatorname{sl}(\Psi(u_3)),\operatorname{sl}(\Psi(u_4))\big\rangle\\&=&\frac{(\tan (\frac{h_3 \pi}{m})- \tan (\frac{h_1 \pi}{m}))(\tan (\frac{h_4 \pi}{m})- \tan (\frac{h_2 \pi}{m}))}{(\tan (\frac{h_3 \pi}{m})- \tan (\frac{h_2 \pi}{m}))(\tan (\frac{h_4 \pi}{m})- \tan (\frac{h_1 \pi}{m}))}\\
&=&\frac{\sin(\frac{(h_3-h_1)\pi}{m})\sin(\frac{(h_4-h_2)\pi}{m})}{\sin(\frac{(h_3-h_2)\pi}{m})\sin(\frac{(h_4-h_1)\pi}{m})}\,.
\end{eqnarray*}
Setting $k_1:=h_3-h_1$, $k_2:=h_4-h_2$, $k_3:=h_3-h_2$ and
$k_4:=h_4-h_1$, one gets $1\leq k_3<k_1, k_2<k_4\leq m-1$ and
$k_1+k_2=k_3+k_4$. Using
$\sin(\theta)=-e^{-i\theta}(1-e^{2i\theta})/2i$, one finally 
obtains
$$
K_{\varLambda}^+\owns q=\frac{(1-\zeta_{m}^{k_1})(1-\zeta_{m}^{k_2})}{(1-\zeta_{m}^{k_3})(1-\zeta_{m}^{k_4})}=f_m(d)\,,
$$
with $d:=(k_1,k_2,k_3,k_4)$, as in~(\ref{fmd}). Then, $d\in D_m$ if its
first two coordinates are interchanged, if necessary, to ensure that
$k_1\leq k_2$; note that this operation does not change the value of
$f_m(d)$. This proves the first assertion.

Suppose that $\operatorname{card}(U)\geq 7$. Let
$U'$ consist of seven directions of $U$ and let $V':= \{
\Psi(u)/| \Psi(u)|\, |\, u\in U' \}$. We may assume
that all the directions of $V'$ are in the first two quadrants, so one
of these quadrants, say the first, contains at least four directions
of $V'$. Application of the above argument to these four directions gives integers $h_j$ satisfying $0\leq h_1<h_2<h_3<h_4\leq m/2$, where we may also assume, by rotating the directions of $V'$ if
necessary, that $h_1=0$. As above, we obtain a corresponding solution of
$f_m(d)=q\in K_{\varLambda}^+$, where $d\in D_m$. 

By property (Alg) and Theorem~\ref{algcoro}, the set $\mathcal{C}(K_{\varLambda}^+)$
is finite and there is a number
$m_{\varLambda}\in\N$ such that all solutions of $f_{m}(d)\in K_{\varLambda}^+$, where $m\geq 4$ and
$d\in D_{m}$, are either of the form
(xii) or (xiii) of Theorem~\ref{intersectq} or are  given, up to multiplication of $m$
and $d$ by the same factor, by $m=m_{\varLambda}$ and $d$
from a finite list. Without restriction, we may assume that $m_{\varLambda}$ is even. 

Suppose that the above solution is of the form
(xii) or (xiii) of Theorem~\ref{intersectq}. Then using $h_1=0$, one
obtains $h_4=k_4=k+s>m/2$, a contradiction. Thus, our solution derives from
$m=m_{\varLambda}$ and finitely many values of $d\in D_m$. Since
this applies to any four directions of $V'$ lying in the first
quadrant, all such directions correspond to angles with the positive
real axis which are integer multiples of $\pi/m_{\varLambda}$.

We claim that all directions of $V'$ have the latter property. To see
this, suppose that there is a direction $v\in V'$ in the second
quadrant, and consider a set of four directions $v_j$, $1\leq j\leq
4$, in $V'$, where $v_4=v$ and $v_j$, $1\leq j\leq
3$, lie in the first quadrant. Suppose that $v_j=e^{h_j\pi i/m}$, $1\leq
j\leq 4$. Then $h_j$ is an integer multiple of $m/m_{\varLambda}$, for $1\leq j\leq
3$. Again, we obtain a corresponding solution of
$f_m(d)=q\in K_{\varLambda}^+$, where $d\in D_m$. If this solution derives
from the finite list guaranteed by Theorem~\ref{algcoro}, 
then clearly $h_4$ is also an integer multiple of
$m/m_{\varLambda}$. Otherwise, by Theorem~\ref{algcoro}, this
solution is of the form (xii) or (xiii) of Theorem~\ref{intersectq} and we
can take $h_1=0$ as before, whence either $h_2=k$, $h_3=2k$ and
$h_4=k+s$, $1\leq k\leq s/2$, or $h_2=s-k$, $h_3=s$ and
$h_4=k+s$, $s/2\leq k< s$, where $m=2s$. Since
$s=m/2=(m_{\varLambda}/2)(m/m_{\varLambda})$ is an integer 
multiple of $m/m_{\varLambda}$, we conclude in either case that $k$, and hence
$h_4=k+s$, is also an integer multiple of $m/m_{\varLambda}$. This
proves the claim.

It thus remains to examine the case $m=m_{\varLambda}$ in more detail. Let $h_j$,
$1\leq j\leq 4$, correspond to the four directions of $V'$ having the
smallest angles with the positive real axis, so that $h_1=0$ and
$h_j\leq m/2$, $2\leq j\leq 4$. We have already shown that the
corresponding $d=(k_1,k_2,k_3,k_4)$ must occur in the finite list 
guaranteed by Theorem~\ref{algcoro}.  Since $h_j\leq m/2$, $1\leq j\leq 4$, we
also have $k_j\leq m/2$, $1\leq j\leq 4$. This yields only
finitely many quadruples $(h_1,h_2,h_3,h_4)=(0,k_1-k_3,k_1,k_4)$.

Suppose that $h$ corresponds to any other direction of $V'$ and replace $(h_1,h_2,h_3,h_4)$ by
$(h_2,h_3,h_4,h)$. We obtain finitely many
$d=(h_4-h_2,h-h_3,h_4-h_3,h-h_2)\in D_m$, which, by
Theorem~\ref{algcoro},  either occur in (xii) or (xiii) of
Theorem~\ref{intersectq} with $m=m_{\varLambda}$ or occur
in the finite list guaranteed by that result. This gives only finitely many
possible finite sets of more than four directions, which implies that
$\operatorname{card}(U)$ is bounded from above by a finite number that
only depends on $\varLambda$ (since the above analysis only depends on $\varLambda$). 
\end{proof}

Similarly, the next result even holds for arbitrary sets $\varLambda$
with property ($n$-Cyc), where $n\geq 3$.

\begin{theorem}\label{finitesetncr0gen}
Let $n\geq 3$ and let $\varLambda$ be an $n$-cyclotomic Delone set. Further, let $U\subset\mathbb{S}^1$ be a set of four or more pairwise
nonparallel $\varLambda$-directions and suppose the existence of a
$U$-polygon. Then the
cross ratio of slopes of any four directions of $U$, arranged in order
of increasing angle with the positive real axis, is an element of the subset
$\mathcal{C}(K_{\varLambda}^+)$ of\/
$
\mathcal{C}(\Q(\zeta_n)^+)
$.  Moreover
$$\mathcal{C}(\Q(\zeta_n)^+)=\mathcal{C}_{\operatorname{lcm}(2n,12)}(\Q(\zeta_n)^+)$$
is finite 
and $\operatorname{card}(U)$ is bounded above by a finite number
$b_n\in\N$ that only depends on $n$. In particular, one can choose
$b_3=b_4=6$, $b_5=10$, $b_8=8$ and $b_{12}=12$. 
\end{theorem}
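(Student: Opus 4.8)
The plan is to deduce the result from Theorem~\ref{main} and Theorem~\ref{t1}. Property~($n$-Cyc) gives $K_{\varLambda}\subset\Q(\zeta_n)$, hence $K_{\varLambda}^+\subset\Q(\zeta_n)^+$ and $\mathcal{C}(K_{\varLambda}^+)\subset\mathcal{C}(\Q(\zeta_n)^+)$; combined with the first part of Theorem~\ref{main} this already yields the assertion on the cross ratio of slopes. The displayed identity $\mathcal{C}(\Q(\zeta_n)^+)=\mathcal{C}_{\operatorname{lcm}(2n,12)}(\Q(\zeta_n)^+)$ together with its finiteness is precisely the content of Theorem~\ref{t1}.

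For the existence of a bound $b_n$ depending only on $n$, I would rerun the combinatorial part of the proof of Theorem~\ref{main}, with the appeal to Theorem~\ref{algcoro} replaced by Theorem~\ref{t1}. The only observation needed is that the number called $m_{\varLambda}$ there may now be taken equal to $\operatorname{lcm}(2n,12)$, a quantity depending only on $n$; since everything that follows in that proof (the reduction of a $U$-polygon, via an affine automorphism $\Psi$, to a set of normalised directions $e^{h\pi i/m}$; the normalisation $h_1=0$; the exclusion of the infinite families~(xii),~(xiii) of Theorem~\ref{intersectq} by way of $h_4=k+s>m/2$; and the final counting of admissible quadruples of angles) is driven only by that number and by the finite list of solutions $d$ attached to it, the resulting upper bound on $\operatorname{card}(U)$ depends only on $n$.

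To obtain the stated explicit values, I would specialise and carry out the corresponding finite search. For $n\in\{3,4\}$ one has $\Q(\zeta_n)^+=\Q$, so the relevant data is the solution list of Theorem~\ref{intersectq}; this is precisely the situation analysed by Gardner and Gritzmann (cf.~\cite[Thm.~4.5]{GG}), which yields $b_3=b_4=6$. For $n\in\{5,8,12\}$ one has $\operatorname{lcm}(2n,12)=60,48,24$ and $\Q(\zeta_n)^+=\Q(\sqrt5),\Q(\sqrt2),\Q(\sqrt3)$, respectively, so the explicit solution lists of Corollary~\ref{coro8125}(a),~(b),~(c) are available. In each case the proof of Theorem~\ref{main}, now run with $m:=\operatorname{lcm}(2n,12)$ fixed, reduces matters to the following purely finite problem: determine the largest $S\subset\{0,1,\dots,m-1\}$ such that for every four elements $h_1<h_2<h_3<h_4$ of $S$ the quadruple $d=(h_3-h_1,h_4-h_2,h_3-h_2,h_4-h_1)$ --- with its first two entries interchanged, if necessary, so that $d\in D_m$ --- is, up to a common factor, either of the form~(xii),~(xiii) of Theorem~\ref{intersectq} or an entry of the pertinent list in Corollary~\ref{coro8125}. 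One then verifies $\operatorname{card}(S)\le b_n$ with $b_5=10$, $b_8=8$ and $b_{12}=12$.

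The main obstacle is exactly this last step: it is a finite but sizeable verification that any choice of more than $b_n$ of the $m$ available direction-indices contains four whose cross-ratio datum $d$ is inadmissible. As in the proof of Theorem~\ref{main}, the two infinite families~(xii),~(xiii) have to be disposed of separately --- via the normalisation $h_1=0$, which forces $h_4=k+s>m/2$ and hence contradicts the bound $h_4\le m/2$ --- after which only the finite lists of Theorem~\ref{intersectq} and Corollary~\ref{coro8125} remain and the search terminates; for the three cyclotomic cases $n=5,8,12$ this search is most conveniently carried out by a finite computation.
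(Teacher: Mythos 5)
Your proposal follows essentially the same route as the paper: the cross-ratio assertion from Theorem~\ref{main} plus $K_{\varLambda}^+\subset\Q(\zeta_n)^+$, the identity and finiteness from Theorem~\ref{t1}, the bound $b_n$ by rerunning the proof of Theorem~\ref{main} with $m_{\varLambda}=\operatorname{lcm}(2n,12)$, and the explicit values via Gardner--Gritzmann for $n=3,4$ and the finite search over the lists of Corollary~\ref{coro8125} for $n=5,8,12$. The only difference is that you defer the final finite verification to a computation, whereas the paper actually carries it out (in detail for $n=8$, with the admissible index sets listed for $n=5,12$); your description of that search, including the separate treatment of the infinite families (xii)--(xiii) via $h_1=0$ and $h_4=k+s>m/2$, matches the paper's argument.
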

\begin{proof}
Employing Theorem~\ref{t1} together with the trivial observation that
$K_{\varLambda}^+\subset\Q(\zeta_n)^+$ for any
$n$-cyclotomic Delone set, the general result follows from the same
arguments as used in the proof of Theorem~\ref{main}. The work of
Gardner and Gritzmann shows that one can choose $b_3=b_4=6$; cf.~\cite[Thm.~ 4.5]{GG}. The specific
bounds $b_n$ for $n=5,8,12$ are obtained by following the proof of
Theorem~\ref{main} and employing Corollary~\ref{coro8125}.

More precisely, let $n=8$ (whence $\operatorname{lcm}(2n,12)=48$) and suppose that $\operatorname{card}(U)\geq 7$. Let
$U'$ consist of seven directions of $U$ and let $V':= \{
\Psi(u)/| \Psi(u)|\, |\, u\in U' \}$, with $\Psi$ as described in the
proof of Theorem~\ref{main}. Then all
directions of $V'$ correspond to angles with the positive
real axis which are integer multiples of $\pi/48$ and it suffices to examine the case $m=48$ in more detail. Let $h_j$,
$1\leq j\leq 4$, correspond to the four directions of $V'$ having the
smallest angles with the positive real axis, so that $h_1=0$ and
$h_j\leq m/2=24$, $2\leq j\leq 4$. The corresponding $d=(k_1,k_2,k_3,k_4)$ must occur in (1)-(82) of
Corollary~\ref{coro8125}(b). Since $h_j\leq 24$, $1\leq j\leq 4$, we
also have $k_j\leq 24$, $1\leq j\leq 4$. The only possibilities are (1), (3), (15), (19), (27), (28), (33), (34),
(47), (67), (76) and (81) of
Corollary~\ref{coro8125}(b). These yield
\begin{eqnarray*}
(h_1,h_2,h_3,h_4)&\in&\big\{(0,2,6,20), (0,6,12,18), (0,2,4,12),
(0,6,12,24),\\&& \hphantom{\big\{} (0,2,10,24),(0,6,18,24), (0,6,8,14),
(0,6,9,21), \\&& \hphantom{\big\{}(0,8,10,20), (0,6,16,22), (0,8,16,24), (0,8,12,20)\big\}\,.
\end{eqnarray*}
Suppose that $h$ corresponds to any other direction of $V'$ and replace $(h_1,h_2,h_3,h_4)$ by
$(h_2,h_3,h_4,h)$. The corresponding $d$ either occur in (xii) or (xiii) of
Theorem~\ref{intersectq} with $m=48$ or occur in (1)-(82) of
Corollary~\ref{coro8125}(b).  We obtain $(18,h-6,14,h-2)$, $(12,h-12,6,h-6)$,
$(10,h-4,8,h-2)$, $(18,h-12,12,h-6)$, $(22,h-10,14,h-2)$,
$(18,h-18,6,h-6)$, $(8,h-8,6,h-6)$, $(15,h-9,12,h-6)$,
$(12,h-10,10,h-8)$, $(16,h-16,6,h-6)$, $(16,h-16,8,h-8)$ and
$(12,h-12,8,h-8)$. The only possibilities are $h=24,30,36,42$ for $(12,h-12,6,h-6)$, $h=26,40$ for $(10,h-4,8,h-2)$,
$h=30,36,42$ for $(18,h-12,12,h-6)$, $h=38,46$ for $(22,h-10,14,h-2)$,
$h=36,42$ for $(18,h-18,6,h-6)$, $h=34$ for $(12,h-10,10,h-8)$,
$h=32,40$ for $(16,h-16,8,h-8)$ and $h=34$ for $(12,h-12,8,h-8)$. It
follows that the only possible sets of more than four directions only comprise
directions of the form $e^{h\pi i/48}$ and 
are given by  the ranges 
\begin{eqnarray*}&&\{0,8,16,24,32,40\},
 \{0,8,12,20,34\},
\{0,6,12,18,24,30,36,42\},\\&& \{0,2,4,12,26,40\},
\{0,6,12,24,30,36,42\}, \{0,2,10,24,38,46\},\\&& \{0,6,18,24,36,42\}, \{0,8,10,20,34\} 
\end{eqnarray*}
of $h$. In particular, $\operatorname{card}(U)\leq 8$.  

With the help of
Corollary~\ref{coro8125},
the cases $n=5,12$ can be treated analogously with the following
results. 

For $n=12$, the only possible sets of more than four directions only comprise
directions of the form $e^{h\pi i/24}$ and are given by the ranges 
\begin{eqnarray*}&&\{0,4,8,12,16,18,20,22\},  \{0,4,6,10,14,16,18,20,22\},\\&& \{0,2,4,10,12,14,18,20,22\},
 \{0,2,4,8,12,14,16,18,20,22\},\\&& \{0,2,4,6,8,10,12,14,16,18,20,22\},
  \{0,2,6,12,16,18,20,22\},\\&& \{0,2,4,12,14,20,22\},  \{0,4,6,12,14,16,18,20,22\},
\{0,2,8,12,18,20,22\},\\&&   \{0,2,6,10,14,16,18,20,22\},\{0,2,8,10,16,18,20,22\},
\{0,2,10,12,20,22\}
\end{eqnarray*} 
of $h$, whence $\operatorname{card}(U)\leq 12$. 

For $n=5$, the only possible sets of more than four directions only comprise
directions of the form $e^{h\pi i/60}$ and are given by the ranges 
\begin{eqnarray*}&&
\{0,10,20,30,40,50\},\{0,6,24,30,48,54\},
\{0,2,4,10,32,54\},\\&& \{0,4,8,18,34,50\},
\{0,6,10,16,38\},\{0,6,12,24,30,36,42,48,54\},\\&& \{0,10,18,28,44\},
\{0,12,18,30,36,42,48,54\},\{0,6,8,16,38\},\\&& \{0,10,14,28,34\},\{0,2,8,30,52,58\},
\{0,4,14,30,46,56\},\\&& \{0,6,18,30,42,48,54\},\{0,6,12,30,36,42,48,54\},\\&& 
\{0,6,12,18,24,30,36,42,48,54\}, \{0,6,18,24,36,42,48,54\}
\end{eqnarray*}
of $h$, whence $\operatorname{card}(U)\leq 10$ in this case. 
\end{proof}

Without further mention, the following result will be used in Remark~\ref{maxrem} below.

\begin{lem}\label{upollem}
Let $\varLambda$ be a $K$-algebraic model set and let
$U\subset\mathbb{S}^1$ be a finite set of directions. The following
statements are equivalent:
\begin{itemize}
\item[(i)]
There is a $U$-polygon in $\varLambda$.
\item[(ii)]
For any $K$-algebraic model set $\varLambda'$, there is a $U$-polygon
in $\varLambda'$. 
\end{itemize}
\end{lem}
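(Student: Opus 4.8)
The plan is to prove Lemma~\ref{upollem} by reducing both implications to purely field-theoretic conditions that depend only on $K$, using Fact~\ref{uaffine} to replace ``$U$-polygon in $\varLambda$'' with a statement about affinely regular polygons and property (Hom). Since (ii)$\Rightarrow$(i) is trivial (take $\varLambda'=\varLambda$), the content is the direction (i)$\Rightarrow$(ii).

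\medskip
\noindent\emph{Step 1: Reduce to affinely regular polygons.} Suppose there is a $U$-polygon $P$ in $\varLambda$. By Fact~\ref{uaffine}, there is an affinely regular polygon $Q$ each of whose edges is parallel to a direction of $U$; more precisely, going through the proof, we may take $Q$ to be an affinely regular $N$-gon (some $N\geq 3$) such that $U$ is contained in the set of edge directions of $Q$. Write $Q=\Phi(R)$ where $R$ is a regular $N$-gon and $\Phi$ is an $\R$-affine automorphism of $\C$. The key point is that, by Remark~\ref{urem} (invoking~\cite[Thm.~3.3]{H3}), the existence of an affinely regular $N$-gon in $\varLambda$ forces $\Q(\zeta_N)^+\subset K_{\varLambda}^+$; by Proposition~\ref{cmsads} we have $K_{\varLambda}=K$, so $\Q(\zeta_N)^+\subset K^+$, a condition intrinsic to $K$.

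\medskip
\noindent\emph{Step 2: Produce an affinely regular $N$-gon with the right edge directions inside any $\varLambda'$.} Let $\varLambda'$ be an arbitrary $K$-algebraic model set; by Proposition~\ref{cmsads}, $K_{\varLambda'}=K$. Because $\Q(\zeta_N)^+\subset K^+$, one can realize a concrete affinely regular $N$-gon with vertices in $K$ whose edge directions are exactly those of $Q$: take the ``canonical'' affinely regular $N$-gon obtained by applying to the regular $N$-gon $\{\zeta_N^j\}$ a linear map with coefficients in $K$ chosen to match the edge directions of $Q$ up to a common complex scalar and translation (the edge directions of $Q$ are themselves $\varLambda$-directions, hence, by Remark~\ref{okdirections}, parallel to elements of $\mathcal{O}_K$, so the required linear map has $K$-rational coefficients). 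Concretely, one checks that the vertex set $F$ of such a polygon lies in $K$. Now apply property (Hom) for $\varLambda'$ (Definition~\ref{algdeldef}, available since $K$-algebraic model sets are algebraic Delone sets by Proposition~\ref{cmsads}, or directly Lemma~\ref{dilate}): there is a homothety $h$ with $h(F)\subset\varLambda'$. Since homotheties preserve directions and map affinely regular polygons to affinely regular polygons, $\operatorname{conv}(h(F))$ is an affinely regular $N$-gon in $\varLambda'$ each of whose edges is parallel to a direction of $U$.

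\medskip
\noindent\emph{Step 3: Conclude via Fact~\ref{uaffine} again.} An affinely regular $N$-gon whose edge directions include all of $U$ is, after possibly passing to the associated ``doubled'' polygon when $N$ is odd (so that the number of vertices is even), a $U$-polygon in $\varLambda'$ by Remark~\ref{urem}; alternatively, invoke the ``if'' direction of Fact~\ref{uaffine} directly: the existence of an affinely regular polygon with each direction of $U$ parallel to one of its edges already yields the existence of a $U$-polygon, and one can arrange it to sit inside $\varLambda'$ by the homothety argument of Step~2 applied to that $U$-polygon's vertex set (which again lies in $K$, being obtained from $K$-rational data). This gives (i)$\Rightarrow$(ii).

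\medskip
\noindent\emph{Main obstacle.} The delicate point is Step~2: one must be careful that the affinely regular $N$-gon (or $U$-polygon) constructed so as to have the prescribed edge directions actually has \emph{all} its vertices in $K$, not merely in some larger field. This requires choosing the linearizing map with coefficients in $K$, which is exactly where $\Q(\zeta_N)^+\subset K^+$ (equivalently $\zeta_N+\zeta_N^{-1}\in K$, together with $\overline{K}=K$ and the fact that the relevant direction vectors lie in $\mathcal{O}_K$) is used; tracking that the cross-ratio/edge-direction data is $K^+$-rational (cf.~Fact~\ref{crkn4gen}) and hence liftable is the crux. Once the vertex set is pinned down inside $K$, property (Hom)/Lemma~\ref{dilate} does the rest mechanically, and the reverse implication is immediate.
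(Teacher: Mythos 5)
There is a genuine gap in Steps 1--2. Fact~\ref{uaffine} only gives you an affinely regular polygon \emph{somewhere in the plane} with edges parallel to the directions of $U$; it does not place that polygon in $\varLambda$. The $U$-polygon $P\subset\varLambda$ you start from is in general \emph{not} affinely regular (Remark~\ref{urem} makes exactly this point with the $U$-icosagon in the T\"ubingen triangle tiling), so you cannot invoke Remark~\ref{urem}, i.e.\ \cite[Thm.~3.3]{H3}, to deduce $\Q(\zeta_N)^+\subset K^+$: that criterion requires an affinely regular $N$-gon \emph{in} $\varLambda$, which you have not produced. Consequently the whole of Step~2 --- constructing a $K$-rational affinely regular $N$-gon with prescribed edge directions --- rests on an unproved field containment, and is in any case only sketched (``one checks that the vertex set lies in $K$'' is essentially the nontrivial content of \cite[Thm.~3.3]{H3}). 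A smaller slip in the same step: only those edge directions of the affinely regular polygon that actually belong to $U$ are known to be $\varLambda$-directions; the remaining ones need not be parallel to elements of $\mathcal{O}_K$.

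The detour through affinely regular polygons is unnecessary and is what creates the gap. The vertex set $F$ of the given $U$-polygon $P$ lies in $\varLambda\subset t+\mathcal{O}_K$, so for any $v\in F$ the translate $F-v$ is a finite subset of $\mathcal{O}_K\subset K=K_{\varLambda'}$ (Proposition~\ref{cmsads}). Lemma~\ref{dilate} (property (Hom) for $\varLambda'$) yields a homothety $h$ with $h(F-v)\subset\varLambda'$, and since homotheties preserve directions, convexity and vertices, $h(P-v)$ is a $U$-polygon in $\varLambda'$. This direct transfer of $P$ itself is precisely the content of \cite[Fact 4.4]{H5}, which the paper cites together with Proposition~\ref{cmsads} as its entire proof. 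Your ``alternatively'' remark at the end of Step~3 gestures at this homothety argument, but you apply it to a polygon whose $K$-rationality you never established, rather than to $P$, whose $K$-rationality after translation is automatic.
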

\begin{proof}
The assertion follows immediately from Proposition~\ref{cmsads} together with~\cite[Fact 4.4]{H5}. 
\end{proof}

\begin{figure}
\centerline{\epsfysize=0.6\textwidth\epsfbox{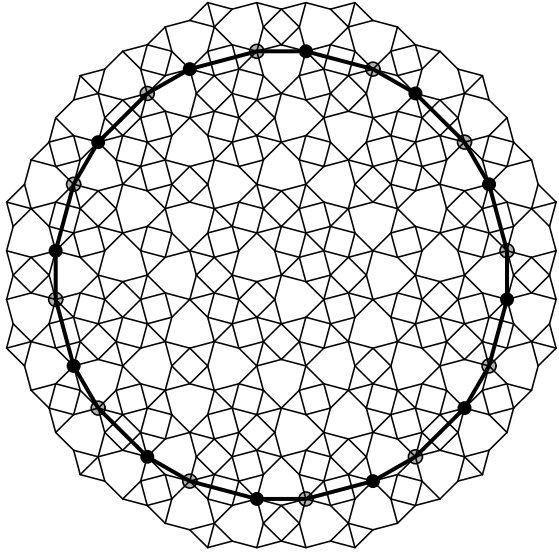}}
\caption{The boundary of a $U$-polygon in
  the vertex set $\varLambda$ of the twelvefold 
  shield tiling, where $U$ is the set of
  twelve pairwise nonparallel $\varLambda$-directions
 given by the edges and diagonals of the central regular
 dodecagon. The vertices of $\varLambda$ in the interior of
 the  $U$-polygon together with the vertices indicated by the black and grey dots, respectively,
give two different convex subsets of $\varLambda$ with the same X-rays in the
directions of $U$.}
\label{fig:tilingupolygon}
\end{figure}

\begin{rem}\label{maxrem}
 The work of
Gardner and Gritzmann shows that $b_3=b_4=6$ is best possible for any
$3$- or $4$-cyclotomic model set; cf.~\cite[Example 4.3]{GG}. The $U$-icosagon in the vertex set of the T\"ubingen
 triangle tiling from Remark~\ref{urem} has the property that
 $\operatorname{card}(U)=10$; see~\cite[Figure 1]{H4}. This  shows that, for any $5$-cyclotomic model set, the number $b_5=10$ is best
 possible. Fig.~\ref{fig:tilingupolygon} shows a $U$-polygon with $24$ vertices
 in the vertex set of the
 shield tiling with $\operatorname{card}(U)=12$, wherefore $b_{12}=12$
 is best possible for any $12$-cyclotomic model set. A similar example
 of a $U$-polygon with $16$ vertices  in the vertex set of the
 Ammann-Beenker tiling with $\operatorname{card}(U)=8$ shows that $b_{8}=8$ is
 best possible for any $8$-cyclotomic model set; cf.~\cite[Fig.~2]{H5}. $U$-polygons
 of {\em class $c\geq 4$} (i.e., $U$-polygons with $4$ {\em
   consecutive edges} parallel to directions of $U$) in cyclotomic
 model sets were studied in~\cite{H4}. By~\cite[Corollary 14]{H4} (see
 also~\cite[Thm.~ 12]{DP}), the
 existence of a 
 $U$-polygon of class $c\geq 4$ in an $n$-cyclotomic model set with
 $n\not\equiv 2\pmod 4$ having the property that $\phi(n)/2$ is equal to one or a prime number
 implies that $\operatorname{card}(U)\leq a_n$, where $a_3=a_4=6$, $a_8=8$,
 $a_{12}=12$ and $a_n=2n$ for all other such values of $n$. In particular,
 one observes the coincidence $b_n=a_n$ for $n=3,4,5,8,12$; cf.~Theorem~\ref{finitesetncr0gen}. However, there does not seem to be a reason why the least
 possible numbers $b_n$ in Theorem~\ref{finitesetncr0gen} may not be
 larger than $a_n$ for other $n\geq 3$ having the above property.         
\end{rem}

Summing up, we finally obtain our main result on the
determination of convex subsets of algebraic Delone sets;
see~\cite[Thm.~ 4.21]{H5} for a
weaker version. 

\begin{theorem}\label{dtmain}
Let $\varLambda$ be an algebraic Delone set.
\begin{itemize}
\item[(a)]
There are sets of four pairwise nonparallel $\varLambda$-directions
such that the convex subsets of $\varLambda$ are determined by the corresponding
X-rays. In addition, less than four pairwise nonparallel $\varLambda$-directions never
suffice for this purpose.
\item[(b)]
There is a finite number $c_{\varLambda}\in\N$ such that the convex subsets of
$\varLambda$ are determined by the X-rays in any set of $c_{\varLambda}$
pairwise nonparallel $\varLambda$-directions. 
\end{itemize}
\end{theorem}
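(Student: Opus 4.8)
The plan is to derive both parts from Fact~\ref{characungen}, which for a set $U$ of two or more pairwise nonparallel $\varLambda$-directions identifies the statement ``the convex subsets of $\varLambda$ are determined by the X-rays in the directions of $U$'' with the statement ``there is no $U$-polygon in $\varLambda$'', combined with the quantitative control on $U$-polygons provided by Theorem~\ref{main}. Throughout, note that a $U$-polygon \emph{in} $\varLambda$ is in particular a $U$-polygon, so the nonexistence of $U$-polygons implies the nonexistence of $U$-polygons in $\varLambda$.

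For part (b), recall from property (Alg) and Theorem~\ref{main} that $\mathcal{C}(K_{\varLambda}^+)$ is finite and that there is a number $b_{\varLambda}\in\N$, depending only on $\varLambda$, such that any set $U$ of four or more pairwise nonparallel $\varLambda$-directions admitting a $U$-polygon satisfies $\operatorname{card}(U)\leq b_{\varLambda}$. I would then set $c_{\varLambda}:=\max\{b_{\varLambda}+1,\,4\}$, so that $c_{\varLambda}\geq 4$. If $U$ is any set of exactly $c_{\varLambda}$ pairwise nonparallel $\varLambda$-directions, then $\operatorname{card}(U)\geq 4$ while $\operatorname{card}(U)>b_{\varLambda}$, so Theorem~\ref{main} forces the nonexistence of a $U$-polygon, hence of a $U$-polygon in $\varLambda$; by Fact~\ref{characungen} the convex subsets of $\varLambda$ are then determined by the X-rays in the directions of $U$. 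This settles part (b).

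For the first assertion of part (a), I would first note that the set $S$ of $\varLambda$-directions is infinite: since $\varLambda$ is relatively dense, $K_{\varLambda}/\Q$ is imaginary (Remark~\ref{remalg}), so one may choose $\varepsilon\in K_{\varLambda}\setminus\R$; the elements $1+j\varepsilon\in K_{\varLambda}$, $j\in\N$, are then pairwise nonparallel, and by property (Hom) the direction of each of them lies in $S$. Now fix three distinct $u_1,u_2,u_3\in S$ with angles $\theta_1<\theta_2<\theta_3$ in $[0,\pi)$. The remaining, infinitely many, elements of $S$ fall (by angle) into one of the at most four intervals into which $\theta_1,\theta_2,\theta_3$ divide $[0,\pi)$, so some such interval $A$ contains infinitely many of them. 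For $u\in S$ with angle in $A$, the cross ratio of the slopes of $u_1,u_2,u_3,u$ arranged in order of increasing angle equals $\varphi(\operatorname{sl}(u))$ for a nonconstant fractional-linear map $\varphi$ that does not depend on $u$; since such a $\varphi$ is injective on $\R\cup\{\infty\}$ and distinct nonparallel directions have distinct slopes, this cross ratio takes pairwise distinct values as $u$ ranges over the infinitely many elements of $S$ with angle in $A$, whence only finitely many of those values can belong to the finite set $\mathcal{C}(K_{\varLambda}^+)$. Choosing $u_4\in S$ with angle in $A$ and cross ratio outside $\mathcal{C}(K_{\varLambda}^+)$, and putting $U:=\{u_1,u_2,u_3,u_4\}$, we see from Theorem~\ref{main} that there is no $U$-polygon (one would force that cross ratio into $\mathcal{C}(K_{\varLambda}^+)$), in particular none in $\varLambda$; by Fact~\ref{characungen} the convex subsets of $\varLambda$ are determined by the corresponding X-rays.

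For the second assertion of part (a), let $U$ be a set of fewer than four pairwise nonparallel $\varLambda$-directions. If $\operatorname{card}(U)\in\{2,3\}$, the last clause of Fact~\ref{characungen} yields a $U$-polygon in $\varLambda$, so by that fact the convex subsets of $\varLambda$ are \emph{not} determined by the X-rays in the directions of $U$. If $\operatorname{card}(U)\leq 1$, this is immediate: when $U=\{u\}$ choose $p,q\in\varLambda$ with $p\neq q$ and $p-q$ parallel to the $\varLambda$-direction $u$, and when $U=\varnothing$ choose any $p\neq q$ in $\varLambda$; in either case $\{p\}$ and $\{q\}$ are distinct one-point --- hence convex --- subsets of $\varLambda$ with $X_u\{p\}=X_u\{q\}$ for all $u\in U$. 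The only step I expect to require genuine care is the construction in the first assertion of part (a), where one must manufacture enough $\varLambda$-directions and then use the injectivity of the cross ratio in a single slope variable to evade the finite exceptional set $\mathcal{C}(K_{\varLambda}^+)$ guaranteed by Theorem~\ref{algcoro}; the rest is a direct repackaging of Theorem~\ref{main} and Fact~\ref{characungen}.
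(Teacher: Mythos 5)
Your proof is correct and follows essentially the same route as the paper: both parts reduce to Fact~\ref{characungen} combined with Theorem~\ref{main}, with the only substantive work being the infinitude of cross ratios of slopes of four $\varLambda$-directions, which you obtain (as the paper does) by fixing three directions and varying the fourth to evade the finite set $\mathcal{C}(K_{\varLambda}^+)$. The only minor deviations are that you manufacture infinitely many $\varLambda$-directions via property (Hom) and explicit elements $1+j\varepsilon$ rather than via the density of $\varLambda$-directions in $\mathbb{S}^1$ coming from relative denseness, and that you treat the trivial cases $\operatorname{card}(U)\leq 1$ separately (which is slightly more careful than the paper, since Fact~\ref{characungen} is stated only for two or more directions); both are fine.
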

\begin{proof}
To prove~(a), it suffices by Fact~\ref{characungen} and
Theorem~\ref{main} to take any set of four pairwise
nonparallel $\varLambda$-directions such that the cross ratio of their
slopes, arranged in order
of increasing angle with the positive real axis, is not an element of
the finite set $\mathcal{C}(K_{\varLambda}^+)$. Since
$\varLambda$ is relatively dense, the set of $\varLambda$-directions
is dense in $\mathbb{S}^1$. In particular, this shows that the set of
slopes of $\varLambda$-directions is infinite. For example by fixing three
pairwise nonparallel 
$\varLambda$-directions and letting the fourth one vary, one sees from
this that
the set of cross ratios of
slopes of four pairwise
nonparallel $\varLambda$-directions, arranged in order
of increasing angle with the positive real axis, is infinite as well.
The assertion follows. The additional statement follows immediately
from Fact~\ref{characungen}. Part~(b) is a direct consequence of
Fact~\ref{characungen} and Theorem~\ref{main}.
\end{proof}

The following result improves~\cite[Thm.~
4.33]{H5} and particularly solves Problem 4.34
of~\cite{H5}; cf.~Example~\ref{algex} and compare~\cite[Thm.~ 5.7]{GG}.

\begin{theorem}\label{dtmain2}
Let $n\geq 3$ and let $\varLambda$ be an $n$-cyclotomic Delone set.
\begin{itemize}
\item[(a)]
There are sets of four pairwise nonparallel $\varLambda$-directions
such that the convex subsets of $\varLambda$ are determined by the corresponding
X-rays. In addition, less than four pairwise nonparallel $\varLambda$-directions never
suffice for this purpose.
\item[(b)]
There is a finite number $c_n\in\N$ that only depends on $n$ such that the convex subsets of
$\varLambda$ are determined by the X-rays in any set of $c_n$
pairwise nonparallel $\varLambda$-directions. In particular, one can
choose $c_3=c_4=7$, $c_5=11$, $c_8=9$ and $c_{12}=13$. 
\end{itemize}
\end{theorem}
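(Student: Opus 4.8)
The plan is to deduce Theorem~\ref{dtmain2} from the geometric characterisation of determination in Fact~\ref{characungen} together with Theorem~\ref{finitesetncr0gen}, following verbatim the argument used for Theorem~\ref{dtmain} but now exploiting the inclusion $K_{\varLambda}^+\subset\Q(\zeta_n)^+$. This inclusion makes every cross ratio of slopes that can occur land in the \emph{field-independent} finite set $\mathcal{C}(\Q(\zeta_n)^+)=\mathcal{C}_{\operatorname{lcm}(2n,12)}(\Q(\zeta_n)^+)$, so that the numerical bounds $b_n$ of Theorem~\ref{finitesetncr0gen} become available.

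For part~(a): by Fact~\ref{characungen} it suffices, for the positive assertion, to exhibit a single set $U$ of four pairwise nonparallel $\varLambda$-directions admitting no $U$-polygon in $\varLambda$. By Theorem~\ref{finitesetncr0gen}, whenever a set of four or more pairwise nonparallel $\varLambda$-directions does carry a $U$-polygon, the cross ratio of slopes of any four of its directions, arranged in order of increasing angle with the positive real axis, lies in the finite set $\mathcal{C}(K_{\varLambda}^+)$. Hence it is enough to choose four pairwise nonparallel $\varLambda$-directions whose cross ratio of slopes is not in $\mathcal{C}(K_{\varLambda}^+)$. This is possible because $\varLambda$, being relatively dense, has its set of $\varLambda$-directions dense in $\mathbb{S}^1$, so that fixing three such directions and letting the fourth range over $\varLambda$-directions produces infinitely many distinct cross ratios, which therefore cannot all lie in the finite set $\mathcal{C}(K_{\varLambda}^+)$. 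That fewer than four pairwise nonparallel $\varLambda$-directions never suffice follows immediately from Fact~\ref{characungen}.

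For part~(b): the idea is to take $c_n:=b_n+1$, where $b_n$ is the bound of Theorem~\ref{finitesetncr0gen}. If $U$ is any set of $b_n+1$ pairwise nonparallel $\varLambda$-directions, then there is no $U$-polygon in $\varLambda$; otherwise Theorem~\ref{finitesetncr0gen} would force $\operatorname{card}(U)\le b_n$, a contradiction. By Fact~\ref{characungen} the convex subsets of $\varLambda$ are then determined by the X-rays in the directions of $U$, and $c_n=b_n+1$ depends only on $n$. Inserting the explicit values $b_3=b_4=6$, $b_5=10$, $b_8=8$ and $b_{12}=12$ from Theorem~\ref{finitesetncr0gen} yields $c_3=c_4=7$, $c_5=11$, $c_8=9$ and $c_{12}=13$.

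I do not anticipate a genuine obstacle here: all the substantive work is already absorbed into Theorem~\ref{finitesetncr0gen} (hence into Corollary~\ref{coro8125} and the cyclotomic analysis of Section~\ref{cyc}) and into the equivalence of Fact~\ref{characungen}. The only point needing a little care is the density-of-directions step in part~(a), but this is identical to the corresponding step in the proof of Theorem~\ref{dtmain}(a) and uses nothing beyond relative denseness of $\varLambda$.
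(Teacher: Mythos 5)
Your proposal is correct and follows essentially the same route as the paper: part (a) is the argument of Theorem~\ref{dtmain}(a) (Fact~\ref{characungen} plus the finiteness of the relevant cross-ratio set and the density of $\varLambda$-directions), and part (b) is exactly the paper's intended combination of Fact~\ref{characungen} with the bounds $b_n$ of Theorem~\ref{finitesetncr0gen}, with $c_n=b_n+1$ made explicit. No gaps.
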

\begin{proof}
Part~(a) follows immediately from Theorem~\ref{dtmain}(a). Note that, by Fact~\ref{characungen} and
Theorem~\ref{finitesetncr0gen}, it suffices to take any set of four pairwise
nonparallel $\varLambda$-directions such that the cross ratio of their
slopes, arranged in order
of increasing angle with the positive real axis, is not an element of
the finite set $\mathcal{C}(\Q(\zeta_n)^+)$. Part~(b) is a direct consequence of
Fact~\ref{characungen} in conjunction with 
Theorem~\ref{finitesetncr0gen}.
\end{proof}

\begin{rem}\label{rembest}
Remark~\ref{maxrem} shows that, for any $n$-cyclotomic
model set with $n=3,4,5,8,12$, the
number $c_n$ above is best possible with respect to the numbers of X-rays used. As already explained in the introduction, for practical applications, one additionally has to make sure that the
$\varLambda$-directions used yield densely occupied lines in
$\varLambda$. For the practically most relevant case of $n$-cyclotomic
model sets with $n=3,4,5,8,12$, this
can actually be achieved; cf.~\cite[Remark 5.8]{GG} and~\cite[Sec.~4]{H5} for examples of suitable sets of four pairwise nonparallel
$\varLambda$-directions in these cases. For the latter examples also recall that, for any $n$-cyclotomic model set $\varLambda$, the set of
$\varLambda$-directions is precisely the set
of $\Z[\zeta_n]$-directions; cf. Remark~\ref{okdirections} and
Example~\ref{algex}. It was shown in~\cite[Prop.~ 3.11]{H2} that {\em icosahedral model sets}
$\varLambda\subset\R^3$ can be sliced orthogonal to a fivefold axis of
their underlying $\Z$-module 
into $5$-cyclotomic model sets. Applying Theorem~\ref{dtmain2} to
each such slice, one sees that the convex subsets of $\varLambda$
are determined by the X-rays in suitable four and
any eleven pairwise
nonparallel $\varLambda$-directions orthogonal to
the slicing axis.    
\end{rem}

\section{Determination of convex bodies by continuous X-rays}

In~\cite{GM}, the following continuous version of
Fact~\ref{characungen} was
shown;  compare Fact~\ref{uaffine}. Here, the {\em continuous X-ray} of a
{\em convex body\/} $K\subset\C$ (i.e., $K$ is compact with nonempty interior) in direction $u\in\mathbb{S}^1$ gives the length of
each chord of $K$ parallel to $u$ and the concept of determination is
defined as in the discrete case; cf.~ \cite{G}, \cite{GM} for details. 

\begin{fact}\label{characuncont}
Let $U\subset\mathbb{S}^{1}$ be a set of two or more pairwise nonparallel directions. The following statements are equivalent:
\begin{itemize}
\item[(i)]
The convex bodies in $\C$ are determined by the continuous X-rays in the directions of $U$.
\item[(ii)]
There is no $U$-polygon.
\end{itemize}
In addition, if $\operatorname{card}(U)<4$, then there is a $U$-polygon.  \qed
\end{fact}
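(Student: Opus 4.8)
The plan is to prove the two implications of the equivalence separately, each in contrapositive form, and to deduce the addendum from Fact~\ref{uaffine}. For the addendum: if $\operatorname{card}(U)<4$, then by Fact~\ref{uaffine} it suffices to exhibit an affinely regular polygon each direction of $U$ being parallel to one of its edges. A regular hexagon has three pairwise nonparallel edge directions, an invertible $\R$-linear map of the plane acts on the set of directions (identified with the projective line via $\operatorname{sl}$) as a fractional linear transformation, and the group of all such transformations acts $3$-transitively; hence there is a linear automorphism $\Psi$ of $\C$ carrying the three edge directions of a regular hexagon onto a set of three directions containing $U$, and the image of that hexagon under $\Psi$ is the desired polygon.

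For (i)$\Rightarrow$(ii) I would argue contrapositively and, given a $U$-polygon, construct two distinct convex bodies with equal continuous X-rays in all directions of $U$. By Fact~\ref{uaffine} there is an affinely regular polygon with $U$ parallel to its edges; after possibly doubling its number of vertices (which leaves the family of available edge directions unchanged) and applying a suitable $\R$-linear automorphism of $\C$ (which preserves both the property of being a convex body and that of having equal X-rays in the corresponding directions, as it rescales all chord lengths in a fixed direction by one common factor), I may take this polygon to be a regular $2m$-gon $R$ ($m\ge 2$) with cyclically labelled vertices $v_0,\dots,v_{2m-1}$, every $u\in U$ being parallel to one of its edges. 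Pick a small convex \emph{bump} protruding outward from the boundary of $R$ near $v_0$ and symmetric under the reflection of $R$ in the diameter through $v_0$, and transport it to a bump $B_l$ at each $v_l$ by the rotation of $R$ sending $v_0$ to $v_l$. Let $K_1$ be $R$ with the bumps $B_l$, $l$ even, attached, and $K_2$ be $R$ with the bumps $B_l$, $l$ odd, attached; attaching small outward bumps to a convex polygon yields a convex set, so $K_1,K_2$ are convex bodies, and they are distinct. Fix $u\in U$ parallel to the edge $v_kv_{k+1}$ and let $\rho$ be the reflection of $R$ in the perpendicular bisector of that edge. Using the symmetry of the chosen bump, a short computation in the dihedral symmetry group of $R$ gives $\rho(B_l)=B_{\rho(l)}$ for every $l$, and since $\rho$ reverses the parity of indices, $\rho(K_1)=K_2$. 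As $\rho$ is an isometry whose axis is perpendicular to $u$, it fixes each line parallel to $u$, whence $X_uK_2=X_u\rho(K_1)=X_uK_1$ as functions on $\mathcal{L}_u$. Since this holds for every $u\in U$, the convex bodies are not determined.

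For (ii)$\Rightarrow$(i) I would again argue contrapositively: from distinct convex bodies $K_1,K_2$ with $X_uK_1=X_uK_2$ for all $u\in U$ I will produce a $U$-polygon. Put $A:=\overline{K_1\setminus K_2}$ and $B:=\overline{K_2\setminus K_1}$. Both are nonempty with nonempty interior, for if one were contained in the other then equality of a single X-ray would force equality of areas and hence $K_1=K_2$. Splitting, for each line $\ell$ parallel to a direction $u\in U$, the chord $\ell\cap K_i$ into its part in $K_1\cap K_2$ and its part in the corresponding difference set, the hypothesis yields $X_uA=X_uB$ for every $u\in U$. The sets $A$ and $B$ are bounded by arcs of $\partial K_1$ and $\partial K_2$ and meet only along $\partial K_1\cap\partial K_2$. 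I would then analyse their boundary structure: convexity of $K_1$ and $K_2$ controls, for each $u\in U$, the outermost lines parallel to $u$ meeting $A$ and meeting $B$, and the relations $X_uA=X_uB$ force these to coincide; from this one deduces that the points of $\partial K_1\cap\partial K_2$ at which $A$ and $B$ locally separate one another are finite in number, lie in convex position, and that their convex hull $P$ is a $U$-polygon. Hence (ii) fails, and the equivalence is complete.

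I expect the main obstacle to be the boundary analysis in (ii)$\Rightarrow$(i): a priori $\partial K_1$ and $\partial K_2$ may intersect in a complicated set, so the X-ray hypothesis has to be exploited carefully to isolate the finitely many relevant \emph{crossing} points and to verify the defining property of a $U$-polygon at each of their extreme points. By contrast, the construction in (i)$\Rightarrow$(ii), while the conceptual core, becomes essentially formal once the reduction to a regular polygon and the symmetric bump are in place.
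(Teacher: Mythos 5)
The paper does not prove this statement at all: it quotes it as a known result of Gardner and McMullen~\cite{GM} (see also~\cite[Ch.~1]{G}), so there is no internal proof to compare yours against, and your attempt has to be judged on its own terms. Your addendum and your direction (i)$\Rightarrow$(ii) follow the standard route and are sound in outline, with two caveats. First, the claim that ``attaching small outward bumps to a convex polygon yields a convex set'' is false for arbitrary bumps: the union of a convex polygon and a small convex set protruding from its boundary is in general not convex near the attachment points. You must either take the bump at $v_0$ to be of the form $\operatorname{conv}(R\cup S_0)\setminus \operatorname{int}(R)$ for a small set $S_0$ in the outward vertical angle at $v_0$, symmetric about the diameter through $v_0$, or --- more simply, and as is customary --- cut congruent symmetric corners off $R$ at the even-indexed (resp.\ odd-indexed) vertices; intersecting with half-planes preserves convexity automatically, and your reflection argument applies verbatim. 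Second, in standard position the regular $n$-gon and the regular $2n$-gon ($n$ odd) have disjoint sets of edge directions; the ``doubling'' step works only after an extra rotation by $\pi/(2n)$, harmless here since it is absorbed into the linear automorphism $\Psi$.

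The genuine gap is in (ii)$\Rightarrow$(i), which is the substantive half of the theorem. After reducing to $X_uA=X_uB$ for $A=\overline{K_1\setminus K_2}$ and $B=\overline{K_2\setminus K_1}$, you assert that ``one deduces that the points of $\partial K_1\cap\partial K_2$ at which $A$ and $B$ locally separate one another are finite in number, lie in convex position, and that their convex hull $P$ is a $U$-polygon.'' None of these three claims is argued, and none is routine: a priori $K_1\triangle K_2$ may have infinitely many components; finitely many points of $\partial K_1\cap\partial K_2$ need not be in convex position (three of them can lie on a flat edge of $\partial K_1$); and the defining property of a $U$-polygon --- that the line through each vertex in each direction of $U$ passes through another vertex --- is precisely what has to be manufactured, not observed. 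The known proof obtains it by a chasing argument: for each $u\in U$ one looks at the supporting lines of $A\cup B$ parallel to $u$, where equality of $X_uA$ and $X_uB$ forces the extremal components of $A$ and of $B$ to pair up along a common line in direction $u$; alternating between at least two directions of $U$ then generates a chain of components that must close into a finite cycle, and the distinguished extreme points of the components of that cycle are the vertices of the $U$-polygon. Without this mechanism (or an equivalent one), your sketch does not yet establish the implication.
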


Employing Fact~\ref{characuncont} instead of Fact~\ref{characungen},
the following result follows from the same arguments as used in the proofs
of  Theorems~\ref{dtmain} and~\ref{dtmain2};
compare~\cite[Thm. 6.2]{GG}. Note that neither the uniform
discreteness of $\varLambda$ nor property (Hom) 
are needed in the proof. More precisely, our proof of part~(a) needs property (Alg)
and the relative denseness of $\varLambda$, whereas part~(b) and the
additional statement 
hold for arbitrary sets $\varLambda$ with property (Alg) and
($n$-Cyc) (where $n\geq 3$), respectively. 

\begin{theorem}\label{tmain}
Let $\varLambda$ be an algebraic Delone set.
\begin{itemize}
\item[(a)]
There are sets of four pairwise nonparallel $\varLambda$-directions
such that the convex bodies in $\C$ are determined by the
corresponding continuous 
X-rays. In addition, less than four pairwise nonparallel $\varLambda$-directions never
suffice for this purpose.
\item[(b)]
There is a finite number $c_{\varLambda}\in\N$ such that the convex
bodies in $\C$ are determined by the continuous X-rays in any set of $c_{\varLambda}$
pairwise nonparallel $\varLambda$-directions. 
\end{itemize}
Moreover, for any
$n$-cyclotomic Delone set $\varLambda$, there is a finite number
$c_n\in\N$ that only depends on $n$ such that the convex
bodies in $\C$ are determined by the continuous X-rays in any set of $c_n$
pairwise nonparallel $\varLambda$-directions. In particular, one can choose $c_3=c_4=7$, $c_5=11$, $c_8=9$ and $c_{12}=13$.\qed
\end{theorem}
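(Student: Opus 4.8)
The plan is to deduce Theorem~\ref{tmain} from the already-established discrete machinery by replacing the single input Fact~\ref{characungen} with its continuous analogue Fact~\ref{characuncont}, and otherwise reusing verbatim the arguments from the proofs of Theorems~\ref{dtmain} and~\ref{dtmain2}. The crucial observation is that both Fact~\ref{characungen} and Fact~\ref{characuncont} reduce the determination question to the same combinatorial object, namely the (non)existence of a $U$-polygon in the plane, and that Theorem~\ref{main} and Theorem~\ref{finitesetncr0gen} are statements purely about $U$-polygons and cross ratios of $\varLambda$-directions --- they never use uniform discreteness, relative denseness, or property (Hom) of $\varLambda$. Thus the entire burden of ``convexity vs.\ X-rays'' has already been discharged in Section~5, and all that remains is to quote it correctly.

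First I would prove part~(a). By Fact~\ref{characuncont}, it suffices to exhibit a set of four pairwise nonparallel $\varLambda$-directions admitting no $U$-polygon, and by Theorem~\ref{main} this holds for any four pairwise nonparallel $\varLambda$-directions whose cross ratio of slopes (arranged by increasing angle with the positive real axis) avoids the finite set $\mathcal{C}(K_{\varLambda}^+)$. As in the proof of Theorem~\ref{dtmain}(a), relative denseness of $\varLambda$ makes the set of $\varLambda$-directions dense in $\mathbb{S}^1$, hence the set of slopes is infinite; fixing three $\varLambda$-directions and varying a fourth then produces infinitely many cross ratio values, so all but finitely many choices work. The ``less than four never suffices'' clause is the in-addition statement of Fact~\ref{characuncont} (there is always a $U$-polygon when $\operatorname{card}(U)<4$), verbatim as in the discrete case. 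For part~(b), one invokes Fact~\ref{characuncont} together with the uniform bound $b_{\varLambda}$ from Theorem~\ref{main}: any set of $c_{\varLambda}:=b_{\varLambda}+1$ pairwise nonparallel $\varLambda$-directions cannot support a $U$-polygon, hence determines the convex bodies. The final sentence about $n$-cyclotomic Delone sets follows identically, using Theorem~\ref{finitesetncr0gen} in place of Theorem~\ref{main}, and the explicit values $c_3=c_4=b_3+1=7$, $c_5=b_5+1=11$, $c_8=b_8+1=9$, $c_{12}=b_{12}+1=13$ are read off from the explicit bounds $b_n$ recorded there.

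Since this is a ``same arguments'' proof, the write-up should be short: one paragraph for~(a) (density argument plus Fact~\ref{characuncont}), one sentence for~(b) and the cyclotomic addendum (bound plus Fact~\ref{characuncont}), and a qed. The only point requiring care --- and what I would flag as the ``main obstacle,'' though it is really a bookkeeping matter rather than a mathematical one --- is making explicit exactly which hypotheses on $\varLambda$ each half needs, as the paragraph preceding the theorem statement already does: part~(a) genuinely uses property (Alg) (to get finiteness of $\mathcal{C}(K_{\varLambda}^+)$ via Theorem~\ref{algcoro}) together with relative denseness (to get infinitely many cross ratios), while part~(b) and the $n$-cyclotomic statement need only (Alg) and ($n$-Cyc) respectively, since Theorem~\ref{main} and Theorem~\ref{finitesetncr0gen} already hold for arbitrary sets with those algebraic properties. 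No new estimate, no new geometry, and no new number theory is required; the theorem is a corollary of Section~5 with Fact~\ref{characuncont} swapped in for Fact~\ref{characungen}.
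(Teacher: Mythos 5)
Your proposal is correct and follows exactly the paper's own route: the paper likewise dispatches Theorem~\ref{tmain} by substituting Fact~\ref{characuncont} for Fact~\ref{characungen} and rerunning the arguments of Theorems~\ref{dtmain} and~\ref{dtmain2} verbatim, including the same hypothesis bookkeeping ((Alg) plus relative denseness for part~(a), (Alg) resp.\ ($n$-Cyc) alone for the rest) and the same constants $c_n=b_n+1$.
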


\begin{rem}
Employing the $U$-polygons from
Remark~\ref{maxrem}, it is straightforward to show
that the above numbers $c_n$, where $n=3,4,5,8,12$, are best
possible. 
\end{rem}

\section*{Acknowledgements}
This work was supported by the German Research Council (Deutsche
Forschungsgemeinschaft), within the CRC 701. C.~H.~is
grateful to Richard J.~Gardner for his cooperation and
encouragement. The authors thank M.~Baake for 
useful comments on the manuscript.

\end{document}